\newtheorem*{Whitney towers}{Theorem~\ref{Whitney towers}}
\newtheorem*{h-towers}{Theorems ~\ref{half} \& \ref{$(n)$-solvable}}
\newtheorem*{surgery curves}{Theorem~\ref{surgery curves}}
\newtheorem*{cg=0}{Theorem~\ref{vanish}}
\newtheorem{thm}{Theorem}[section]
\newtheorem{lem}[thm]{Lemma}
\newtheorem{cor}[thm]{Corollary}
\newtheorem{prop}[thm]{Proposition}
\newtheorem{defn}[thm]{Definition}
\newtheorem{ex}[thm]{Example}
\numberwithin{equation}{section}
\numberwithin{figure}{section}
\newcommand{\Z}{\mathbb{Z}}
\newcommand{\Q}{\mathbb{Q}}
\newcommand{\G}{\Gamma}
\newcommand{\ra}{\longrightarrow}
\def\ov{\overline}
\def\s{\sigma}
\def\lra{\longrightarrow}
\title{Derivatives of Knots and Second-Order Signatures}
\author{Tim D. Cochran$^{\dag}$}
\address{Department of Mathematics MS-136, Rice University, PO 1892, Houston, Texas, 77251-1892}
\email{cochran@rice.edu}
\author{Shelly Harvey$^{\dag\dag}$}
\address{Department of Mathematics MS-136, Rice University, PO 1892, Houston, Texas, 77251-1892}
\email{shelly@rice.edu}
\author{Constance Leidy $^{\dag\dag\dag}$}
\address{Wesleyan University, Wesleyan Station, Middletown, CT 06459}
\email{cleidy@wesleyan.edu}
\thanks{\noindent $^{\dag}$Partially supported by the National Science Foundation NSF-DMS-0706929}
\thanks{ $^{\dag\dag}$Partially supported
by NSF-DMS-0539044, NSF-CAREER-DMS-0748458, and The Alfred P. Sloan Foundation}
\thanks{ $^{\dag\dag\dag}$Partially supported
by NSF-DMS-0805867 }
\begin{document}

\begin{abstract} We define a set of ``second-order'' $L^{(2)}$-signature invariants for any algebraically slice knot. These obstruct a knot's being a slice knot and generalize Casson-Gordon invariants, which we consider to be ``first-order signatures''. As one application we prove: If $K$ is a genus one slice knot then, on any genus one Seifert surface $\Sigma$, there exists a homologically essential simple closed curve $J$ of self-linking zero, which has vanishing zero-th order signature and a vanishing first-order signature. This extends theorems of Cooper and Gilmer.
We introduce a geometric notion, that of a derivative of a knot with respect to a metabolizer. We also introduce a new equivalence relation, generalizing homology cobordism, called null-bordism.
\end{abstract}

\maketitle
\section{Introduction}\label{sec:Introduction}

A \textbf{knot} $K$ is the image of a tame embedding of an oriented circle in $S^3$. A \textbf{slice knot} is a knot that bounds an embedding of a $2$-disk in $B^4$. We wish to consider both the \emph{smooth} category and the \emph{topological} category (in the latter case all embeddings are required to be flat). The question of which knots are slice knots was first considered by Kervaire and Milnor in the early 1960's in their study of isolated singularities of $2$-spheres in $4$-manifolds. The question of which knots are slice knots lies at the heart of the topological classification of $4$-dimensional manifolds. Moreover the question of which knots are topologically slice but not smoothly slice may be viewed as ``atomic'' for the question of which topological $4$-manifolds admit distinct smooth structures.

In the 1960's Murasugi, Tristram, Levine and Milnor defined what we shall call \emph{classical} or \emph{abelian} or \emph{zero-th order signatures} for a knot $K$ and showed that these obstruct a knot's being a slice knot. In the 1970's Casson-Gordon defined a set of numbers, that were the first of what we shall call \emph{metabelian} or \emph{first-order signatures}. They showed that these also obstruct a knot's being a slice knot and moreover are independent of the zero-th order signatures ~\cite{CG1}\cite{CG2}. Later, Gilmer showed that, for genus one knots, the Casson-Gordon invariants could be estimated in terms of classical signatures of the components of certain knots lying on a Seifert surface for $K$ ~\cite[Thm. 3]{Gi2}  In particular, (unpublished) improvements of Gilmer's work by D. Cooper yield  the following (a proof of this result can be found in ~\cite[Thm. 5.2]{COT2}).

\begin{thm}[Cooper (see also {\cite{Gi3}\cite[Thm. 4]{Gi2}\cite[Thm. 5.2]{COT2}})]\label{thm:CooperThm} If $K$ is a genus one slice knot that does not have Alexander polynomial $1$ then, on any genus one Seifert surface $\Sigma$, there exist precisely two homologically essential simple closed curves of self-linking zero, one of which, when viewed as a knot in $S^3$, has average classical signature zero.
\end{thm}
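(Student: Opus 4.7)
The plan is to prove the theorem in three steps: an algebraic count of the relevant curves, a geometric construction using the slice disk, and a Cheeger--Gromov / Atiyah--Patodi--Singer vanishing argument. For \emph{Step 1}, choose a basis of $H_1(\Sigma) \cong \Z^2$ in which $V - V^T = \bigl(\begin{smallmatrix} 0 & 1 \\ -1 & 0 \end{smallmatrix}\bigr)$, so that the Seifert matrix has the form $V = \bigl(\begin{smallmatrix}\alpha & \beta\\ \beta-1 & \delta\end{smallmatrix}\bigr)$. A primitive class $(x,y) \in \Z^2$ has self-linking $Q(x,y) = \alpha x^2 + (2\beta-1)xy + \delta y^2$, and every primitive class on an orientable surface is represented by an essential simple closed curve. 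The discriminant of $Q$ equals $1 - 4\det V$. Writing $\Delta_K(t) = (\det V)t^2 + (1-2\det V)t + \det V$ and applying Fox--Milnor forces $\Delta_K(t) = \pm t\, f(t)f(t^{-1})$ with $f(t) = at - b$ and $a - b = \pm 1$, giving $1 - 4\det V = (a+b)^2$. Since $\Delta_K \neq 1$ rules out $\det V = 0$, the discriminant is a positive perfect square, so $Q$ has exactly two distinct primitive rational roots up to sign, yielding precisely two essential simple closed curves $J_1, J_2$ of self-linking zero.

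\emph{Step 2.} Let $D \subset B^4$ be a slice disk for $K$ and set $W = B^4 \setminus \nu(D)$, so $H_1(W) \cong \Z$. By the Fox--Milnor / Blanchfield metabolizer argument, the slice disk picks out a metabolizer of the Blanchfield form of $K$; via the factorization $\Delta_K(t) = f(t)f(t^{-1})$ this selects one of the two rational roots of $Q$ and hence one of the two curves, say $J := J_1$. Geometrically, after pushing $\Sigma$ into $W$, the curve $J$ bounds a properly embedded orientable surface $F \subset W$ obtained by ambient-surgering $\Sigma$ along disks provided by $D$. Now build a 4-manifold $Z$ with $\partial Z = M_J$ (zero-framed surgery on $J$) by attaching a zero-framed 2-handle to $W$ along a simple closed curve on $\Sigma$ dual to $J$, together with any remaining boundary adjustment needed to cap off $\partial W \setminus M_J$. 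Then $H_1(Z) \cong \Z$, and the abelianization $\pi_1(M_J) \to \Z$ extends to $\psi\colon \pi_1(Z) \to \Z$.

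\emph{Step 3.} By the Cheeger--Gromov / Atiyah--Patodi--Singer formula,
\[
\int_{S^1} \sigma_\omega(J)\, d\omega \;=\; \sigma^{(2)}_\psi(Z) - \sigma(Z).
\]
Both terms on the right vanish: $\sigma(Z) = 0$ because $H_2(Z;\Q)$ is carried by the capped-off Seifert surface, whose self-intersection equals its zero surface framing; and $\sigma^{(2)}_\psi(Z) = 0$ because the equivariant intersection form on $H_2(Z; \Q[t^{\pm 1}])$ is presented (after capping the dual generator) by $tV - V^T$ and is Witt-trivial, with the Step 2 metabolizer serving as its Lagrangian.

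The conceptual crux and main obstacle is Step 2: verifying that the slice disk actually provides an embedded surface in $W$ bounded by one specific curve among $\{J_1, J_2\}$, and constructing $Z$ carefully enough that its equivariant intersection form inherits the Blanchfield metabolizer. Once these geometric facts are in place, the $L^{(2)}$-signature vanishing in Step 3 follows from standard Witt-group algebra combined with the APS/Cheeger--Gromov identity, and the interpretation of the integral $\int_{S^1} \sigma_\omega(J)\, d\omega$ as the abelian $\rho$-invariant of $M_J$.
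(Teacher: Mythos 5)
Your Step 1 (the count of exactly two homologically essential self-linking-zero curves via the discriminant $1-4\det V=(a+b)^2$ forced by Fox--Milnor, with $\Delta_K\neq 1$ excluding degeneracy) is essentially correct, needing only the standard remark that on a genus one Seifert surface an essential simple closed curve is determined up to isotopy by its primitive homology class up to sign. The serious problems are in Steps 2 and 3. First, the construction of $Z$ is wrong as described: attaching a $2$-handle to $W=B^4-\nu(\Delta)$ along the curve \emph{dual} to $J$ does not produce $M_J$ as boundary, and there is no separate piece ``$\partial W\setminus M_J$'' to cap off since $\partial W=M_K$ is connected. The correct construction (Proposition~\ref{prop:Efacts} and Lemma~\ref{lem:Wfacts} of this paper) attaches a zero-framed $2$-handle along $J$ itself, observes via handle slides that the new boundary is $M_J\#\,S^1\times S^2$, and then adds a $3$-handle along the sphere obtained by surgering $\Sigma$. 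Your auxiliary claim that $J$ bounds an embedded surface in $W$ obtained by ``ambient-surgering $\Sigma$ along disks provided by $D$'' is neither justified nor needed; what the slice disk actually provides is that $J$ lies in the kernel $P_\Delta$ of $\mathcal{A}_0(K)\to H_1(W;\Q[t,t^{-1}])$.

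Second, and more fundamentally, your Step 3 cannot work with abelian coefficients: in the (corrected) $Z$, the meridian of $J$ is isotopic to the meridian $\alpha$ of the band of $\Sigma$ on which $J$ lies, which is null-homologous in $M_K$ and hence in $Z$; indeed $H_1(Z)\cong\Z$ is generated by the meridian of $K$ and the map $H_1(M_J)\to H_1(Z)$ is zero. So the abelianization of $\pi_1(M_J)$ does \emph{not} extend to a homomorphism $\pi_1(Z)\to\Z$, and the identity $\rho_0(J)=\sigma^{(2)}_\psi(Z)-\sigma(Z)$ has no abelian $\psi$ to which it applies; likewise the assertion that the equivariant intersection form of $Z$ is ``presented by $tV-V^T$'' and Witt-trivial is unsubstantiated. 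This is precisely why Cooper's theorem is a first-order (metabelian) statement: the argument this paper gives (the $c=1$ case of Corollary~\ref{cor:sliceimpliesrhobound}, proved via Proposition~\ref{prop:firstrho=rhozeroofder}, Proposition~\ref{prop:derivisnullbord} and Theorem~\ref{thm:CHLsigvanishes}) uses the coefficient system $\pi_1(Z)\to\pi_1(Z)/\pi_1(Z)^{(2)}_r$, notes that its restriction to $\pi_1(M_J)$ factors through $\Z$ into the torsion-free abelian group $\pi_1(Z)^{(1)}/\pi_1(Z)^{(2)}_r$ (so that, by the subgroup property of $\rho$, the boundary term is exactly $\rho_0(J)$), and then combines the vanishing of the first-order signature of $K$ for the Lagrangian $P_\Delta$ (Theorem~\ref{thm:firstordersigs=0}, i.e.\ the Cochran--Orr--Teichner $L^{(2)}$-vanishing over the slice exterior) with a rank estimate controlling the cobordism piece $E$. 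Without passing to this metabelian quotient, the key fact that $J$ represents the metabolizer and hence dies in $\pi_1(V)^{(2)}_r$ never enters, and your vanishing claims for $\sigma^{(2)}$ have no support.
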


In fact it is well known that if a knot $K$ admits a genus one Seifert surface on which lies a homologically essential simple closed curve of self-linking zero that is a slice knot, then $K$ itself is a slice knot.

\textbf{Conjecture}: The converse of the above is true, that is, a genus one knot is smoothly slice if and only if there exists a homologically essential simple closed curve of self-linking zero on $\Sigma$ that is itself a smoothly slice knot.

We extend the theorem of Cooper and give further evidence for the veracity of this conjecture.

\newtheorem*{thm:main}{Theorem~\ref{thm:main}}
\begin{thm:main} If $K$ is a genus one slice knot then, on any genus one Seifert surface, there exists a homologically essential simple closed curve of self-linking zero, that has vanishing zero-th order signature and a vanishing first-order signature. (Beware that if $\Delta_K(t)=1$ then these  signatures are zero by our definition).
\end{thm:main}

More generally, Gilmer's work suggests, in our language, that the first-order signatures of a knot are related to the zero-th order signatures of certain knots lying on a Seifert surface for $K$. In this paper we extend this work by first defining \emph{second-order signatures} for any algebraically slice knot $K$. These are defined in terms of the first-order signatures of an associated link lying on a Seifert surface for $K$, that we call a \emph{derivative} of $K$ (with respect to a metabolizer of its Seifert form). Briefly, given a metabolizer, $\mathfrak{m}$, for the Seifert form of $K$, $\frac{\partial K}{\partial \mathfrak{m} }$ is a link, embedded on $\Sigma$, representing a basis for $\mathfrak{m}$. We then show, under certain circumstances, that these signatures obstruct $K$ being a slice knot.

We give examples that show that these obstructions are stronger than those imposed by any abelian or metabelian invariants. Higher-order signatures that are (in some cases) stronger than
classical and Casson-Gordon invariants were first described in work of Cochran-Orr-Teichner ~\cite[Theorem 4.6]{COT}, and also used in \cite{COT2}\cite{Ki2}\cite{Hor3}. The present paper improves on these (at the level of second-order only) in several aspects. Firstly, these previous signatures vanish for genus one knots.  Secondly, these previous signatures were defined not so much as invariants but rather as obstructions. Thirdly, and most importantly, the previous signatures were parameterized by very large sets. Since the theorems were of the nature: ``If $K$ is slice then \emph{one} of the second-order signatures vanishes'', the vagueness and infinitude of the index set of such signatures makes them often useless. More recently, P. Horn gave a new definition of higher-order signatures in \cite{Hor3} that overcame the first and second objections above, but not the third. The authors introduced other techniques in ~\cite{CHL1}\cite{CHL1A}\cite{CHL3}, giving higher-order signature obstructions that are non-zero even for genus one knots. But even these signatures were not defined as invariants for all knots and moreover the technique seemed applicable only to a very special class of knots obtained from ribbon knots by iterated satellite constructions. The present paper eliminates, to some extent, all of the above limitations (especially for genus one knots). We show that the index set of our second-order signatures is often finite. Thus it is possible, at least in theory, to check \emph{all} of them.

If $K$ has genus greater than $1$ then its derivatives are links. In this case much less has been developed in the literature. Here first-order signatures of $K$ are related to zero-th order signatures and the Alexander nullities of the derivative links (for Casson-Gordon invariants this was seen in ~\cite{Gi3} although Gilmer informs us that the proofs of Corollaries 0.2 and 0.3 of this work are now known to contain gaps). At the first-order level we are able to recover an $L^{(2)}$-version of Gilmer's aforementioned results and extend Cooper's theorem (which is the case $c=1$)to knots of higher genera:

\newtheorem*{thm:cor}{Corollary~\ref{cor:sliceimpliesrhobound}}
\begin{thm:cor} If $K$ is a slice knot, $P$ is a Lagrangian corresponding to a slice disk, $J=\frac{\partial K}{\partial \mathfrak{m} }$ is a $c$-component link where $\mathfrak{m}$ is metabolizer that represents $P$, and $f:\pi_1(M_J)\to A\cong\Z^k$ is an epimorphism, then
$$
|\rho^f_0(J)|\leq c-1-\eta(J,f),
$$
where $\rho^f_0(J)$ is the zero-th order signature of the derivative link and $\eta(J,f)$ is its Alexander nullity.
\end{thm:cor}

If $K$ has genus greater than $1$ then its \emph{second-order} signatures are related to first-order signatures of its derivative links as well as to certain first-order nullities that we will not treat here in full generality. We state a result for higher-genus knots only in cases where the derivative links have maximal values of these nullities (for definitions see Section~\ref{sec:notation}).

\newtheorem*{thm:main2}{Theorem~\ref{thm:main2}}
\begin{thm:main2} Suppose $K$ is a slice knot with the property that: for each Lagrangian $P$ for which the first-order signature of $K$ corresponding to $P$ vanishes it is possible to choose the corresponding derivative to be a link of maximal Alexander nullity. Then some member of a complete set of second-order signatures  has absolute value at most genus$(\Sigma)-1$.  Moreover, if it is possible to choose each derivative to be an infected trivial link then any complete set of second-order signatures contains zero.
\end{thm:main2}

Examples are given of higher-genus non-satellite knots with vanishing classical and metabelian invariants for which the second-order signatures of Theorem~\ref{thm:main2} obstruct their being slice knots. Moreover these examples, like those of ~\cite{CHL3}, cannot be detected by the techniques of ~\cite{COT} (they are even distinct up to concordance from those considered there). Since they are not formed by iterated satellite constructions, the techniques of ~\cite{CHL3} cannot, in an obvious way, be applied.

All of the results of this paper hold in more general settings. For example, rather than merely being obstructions to a knots being a slice knot, the second-order signatures obstruct a knots being $(2.5)$-solvable. Here we refer to the $(n)$-solvable filtration, $\{\mathcal{F}_{(n)}\}$, of the knot concordance group due to Cochran-Orr-Teichner ~\cite[Section 7,8]{COT}. Here, for simplicity we suppress this level of generality and take the philosophy that we are generalizing the seminal work of Gilmer, Cooper and Casson-Gordon from the 1970's and 1980's. However, in Section~\ref{sec:nsolvable}, we state and prove some of these generalizations.

\section{Notation and Background}\label{sec:notation}

If $K$ is a knot or link in $S^3$, let $S^3-K$ denote the \textbf{exterior} of an open tubular neighborhood of $K$ in $S^3$. Similarly if $\Delta\hookrightarrow B^4$ is a slice disk, let $B^4-\Delta$ denote the exterior of an open tubular neighborhood of $\Delta$ in $B^4$.Let $M_K$ denote the closed $3$-manifold obtained by zero-framed Dehn surgery on the components of $K$. If $K$ is a knot we let $\mathcal{A}_0(K)$ denote the \textbf{rational Alexander module} of $K$
$$
\mathcal{A}_0(K)\equiv H_1(S^3-K;\Q[t,t^{-1}])\cong H_1(M_K;\Q[t,t^{-1}]),
$$
where the latter follows since the longitude of a knot is trivial in the Alexander module. The rank of $\mathcal{A}_0(K)$ as a rational vector space is the degree of the Alexander polynomial of $K$, $\Delta_K(t)$. If the degree of $\Delta_{K}(t)$ is $2d$, it is well known that $d$ is at most the genus, $g$, of any Seifert surface for $K$. In addition, there is a nonsingular \textbf{classical Blanchfield linking form}, $\mathcal{B}\ell_0$ defined on $\mathcal{A}_0(K)$. A submodule $P\subset \mathcal{A}_0(K)$ is a \textbf{Lagrangian} if $P=P^\perp$ with respect to $\mathcal{B}\ell_0$ on $\mathcal{A}_0(K)$. Here
$$
P^\perp=\{x\in \mathcal{A}_0(K)|~\mathcal{B}\ell_0(x,p)=0~ \text{for every } p\in P\}.
$$
It follows from the nonsingularity of $\mathcal{B}\ell_0$ that any Lagrangian is a $d$-dimensional vector subspace (half that of $\mathcal{A}_0(K)$). A submodule $P\subset \mathcal{A}_0(K)$ is \textbf{isotropic} if $P\subset P^\perp$ with respect to $\mathcal{B}\ell_0$, that is, for all $x,y\in P$, $\mathcal{B}\ell_0(x,y)=0$.

More generally, we extend the notion of Alexander module and Blanchfield form to pairs $(J,f)$ where $J=\{J_1,...,J_m\}$ is an ordered, oriented link with trivial linking numbers and $f:\pi_1(M_J)\to A\cong\Z^k$ is an epimorphism. A specific identification of $A$ with $\Z^k$ is \emph{not} assumed. The \textbf{rational Alexander module} of $(J,f)$, denoted $\mathcal{A}_0^f(J)$, is the $\mathbb{Q}[\mathbb{Z}^k]$-torsion submodule of $H_1(M_J;\mathbb{Q}[\mathbb{Z}^k])$. Throughout we will abuse  notation by writing $H_1(M_J;\mathbb{Q}[\mathbb{Z}^k])$ instead of the more proper $H_1(M_J;\mathbb{Q}[A])$. Note that if $f$ is the zero map then $\mathcal{A}_0^f(J)=0$. There is also nonsingular \textbf{classical Blanchfield linking form} defined on $\mathcal{A}_0^f(J)$,
$$
\mathcal{B}\ell_0^f(J):\mathcal{A}_0^f(J) \to (\mathcal{A}_0^f(J))^{\#}\equiv \overline{\text{Hom}_{\mathcal{R}}(\mathcal{A}_0^f(J), \Q(x_1,...,x_k)/\mathcal{R})}.
$$
where $\mathcal{R}\equiv \mathbb{Q}[A]$ ~\cite[Theorem 2.3]{Lei3}, given by the composition of Poincar\'{e} duality, the inverse of a Bockstein and a Kronecker map:
$$
H_1(M_J;\mathcal{R})\overset{PD}{\to}H^2(M_J;\mathcal{R})\overset{B^{-1}}{\rightarrow}
H^1(M_J;\Q(x_1,...,x_k)/\mathcal{R})\overset{\kappa}{\to}\text{Hom}_{\mathcal{R}}(\mathcal{A}_0^f(J), \Q(x_1,...,x_k)/\mathcal{R}).
$$
This reduces, in the case that $J$ is a knot, to the former definition.
The  \textbf{Alexander nullity}, $\boldsymbol{\eta(J,f)}$, of $(J,f)$ is the $\mathcal{R}$-rank of $H_1(M_J;\mathbb{Q}[A])$. We say that $(J,f)$ has \textbf{maximal Alexander nullity} if $f$ is non-trivial and $\eta(J,f)=m-1$. If $J$ is a knot and $f$ is non-trivial then $\eta(J,f)=0$ since the classical Alexander module is a torsion module. More generally, if $f$ is the abelianization map then any link with trivial Milnor's invariants (such as a  boundary link) has maximal Alexander nullity ~\cite{Hi}.

If $G$ is a group then the terms of the \textbf{derived series} of $G$ are defined by $G^{(0)}\equiv G$ and $G^{(n+1)}\equiv [G^{(n)},G^{(n)}]$. The terms of the \textbf{rational derived series} of $G$ are defined by $G_r^{(0)}\equiv G$  and
$$
G_r^{(n+1)}\equiv \{x\in G^{(n)}_r|~ x^k\in [G_r^{(n)},G_r^{(n)}] ~~\text{for some non-zero integer} ~k\}.
$$
Thus $G^{(n)}\subset G^{(n)}_r$ and they agree when $G$ is a knot group ~\cite[Section 3]{Ha1}\cite{Str}.
A group $\G$ is \textbf{poly-torsion-free-abelian} (henceforth \textbf{PTFA}) if it admits a normal
series $\{1\} = \G_0\lhd ~ \G_1\dots \lhd~ \G_n =\G$ such that each of the quotients $\G_{i+1}/\G_i$ is torsion-free abelian. Then one checks that $G/G^{(n)}_r$ is PTFA for any $n$ and $G$ ~\cite[Section 3]{Ha1}.

We describe a generalized satellite construction that has been useful in the literature. Let $R$ be a link in $S^3$ and $\{\eta_1,\eta_2,\ldots,\eta_m\}$ be an oriented trivial link in $S^3$ which misses $R$ bounding a collection of disks that meet $R$ transversely. Suppose $(K_1,K_2,\ldots,K_m)$ is an $m$-tuple of auxiliary knots. Let $R(\eta_1,\ldots,\eta_m,K_1,\ldots,K_m)$ denote the result of the operation pictured in Figure~\ref{fig:infection}. That is, for each $\eta_i$, take the embedded disk in $S^3$ bounded by $\eta_i$; cut $R$ along the disk; grab the cut strands, tie them into the knot $K_i$ (with no twisting) and reglue as shown in Figure~\ref{fig:infection}.

\begin{figure}[htbp]
\setlength{\unitlength}{1pt}
\begin{picture}(262,71)
\put(10,37){$\eta_1$} \put(120,37){$\eta_m$} \put(52,39){$\dots$}
\put(206,36){$\dots$} \put(183,37){$K_1$} \put(236,38){$K_m$}
\put(174,9){$R(\eta_1,\dots,\eta_m,K_1,\dots,K_m)$}
\put(29,7){$R$} \put(82,7){$R$}
\put(20,20){\includegraphics{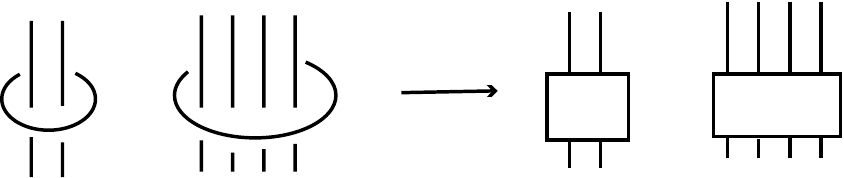}}
\end{picture}
\caption{$R(\eta_1,\dots,\eta_m,K_1,\dots,K_m)$:
Infection of $R$ by $K_i$ along $\eta_i$}\label{fig:infection}
\end{figure}
\noindent We will call this the result of \textbf{infection performed on the link $\boldsymbol{R}$ using the infection knots $\boldsymbol{K_i}$ along the curves $\boldsymbol{\eta_i}$} ~\cite{COT2}. In particular, in this paper, if we draw a band passing through a box labelled by a \emph{knot} (as for example the box labeled by $K_1$ in Figure~\ref{fig:infection}) then this means that that entire band is to be tied into that knot as shown in Figure~\ref{fig:knottedband}
for a trefoil knot.

\begin{figure}[htbp]
\setlength{\unitlength}{1pt}
\begin{picture}(165,151)
\put(0,0){\includegraphics{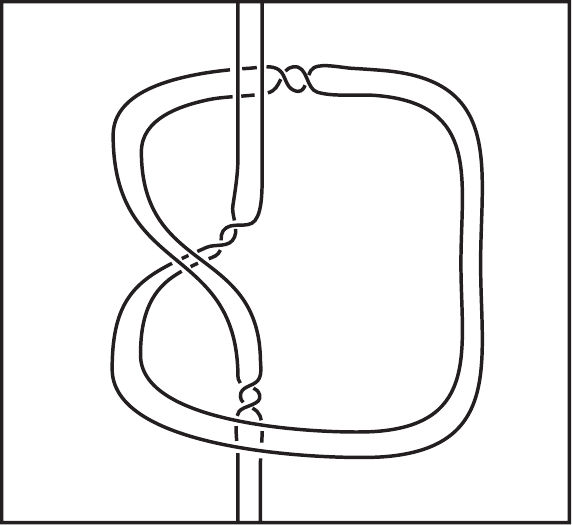}}
\end{picture}
\caption{Tying a band into a trefoil knot}\label{fig:knottedband}
\end{figure}

A link $J=\{J_1,...,J_m\}$ is an \textbf{infected trivial link} if it is obtained from the trivial link of $m$ components by a number of infections on knots along curves in the commutator subgroup.

In this paper we also need to consider \textbf{infection by a string link} as discussed in ~\cite{CO2}\cite[Section 10]{C}\cite{CFT}. We need only the following special case. A $2$-component string link is the union of two knotted arcs properly embedded in $B^2\times [0,1]$. An example of a $2$-component string link is shown on the left-hand side of Figure~\ref{fig:linkedband}. Suppose two ``bands" of a knot ($4$ strands altogether) pass through a box labeled by $L$, a $2$-component string link $L=\{L_1,L_2\}$, as indicated in the center of Figure~\ref{fig:linkedband}. In the present paper this abbreviates the following. Take untwisted parallel-push-offs of each component of $L$, thus forming a four component string link, and use this to replace the box. The final result for the example is shown on the right-hand side of Figure~\ref{fig:linkedband}. The twists at the bottom ensure that the linking numbers between the two parallels are zero. We say that the \textbf{bands of the knot are }\textbf{tied into a string link} $\boldsymbol{L}$.

\begin{figure}[htbp]
\setlength{\unitlength}{1pt}
\begin{picture}(265,221)
\put(-80,0){\includegraphics{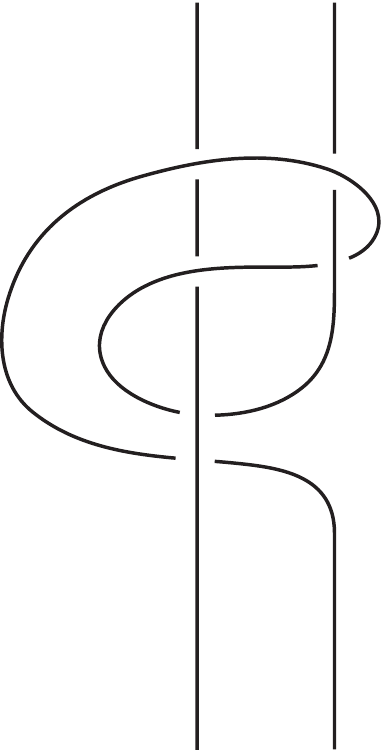}}
\put(80,0){\includegraphics{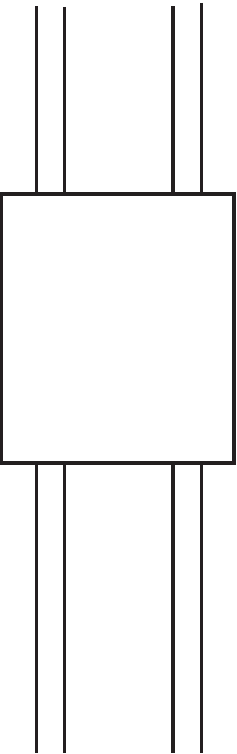}}
\put(190,0){\includegraphics{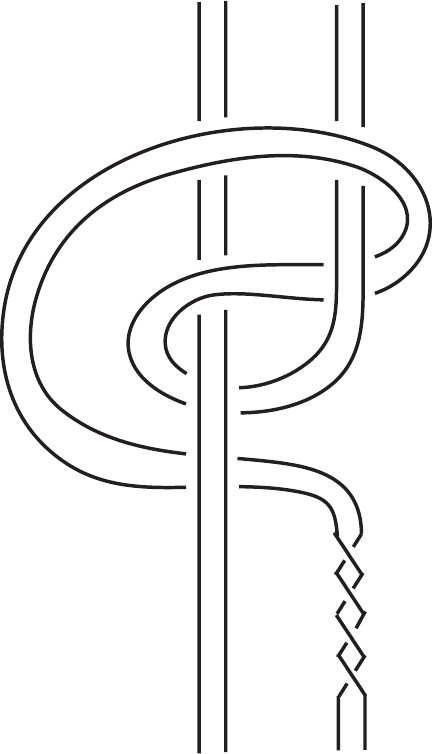}}
\put(110,120){$L$}
\end{picture}
\caption{Tying two bands into a string link}\label{fig:linkedband}
\end{figure}

The signature invariants we employ in this paper are von Neumann $\rho$-invariants. Given a compact, oriented 3-manifold $M$, a discrete group $\G$, and a representation $\phi : \pi_1(M)
\to \G$, the \textbf{von Neumann
$\boldsymbol{\rho}$-invariant} was defined by Cheeger and Gromov by choosing a Riemannian metric and using $\eta$-invariants associated to $M$ and its covering space induced by $\phi$. It can be thought of as an oriented homeomorphism invariant associated to an arbitrary regular covering space of $M$ ~\cite{ChGr1}. If $(M,\phi) = \partial
(W,\psi)$ for some compact, oriented 4-manifold $W$ and $\psi : \pi_1(W) \to \G$, then it is known that $\rho(M,\phi) =
\s^{(2)}_\G(W,\psi) - \s(W)$ where $\s^{(2)}_\G(W,\psi)$ is the
$L^{(2)}$-signature (von Neumann signature) of the equivariant intersection form defined on
$H_2(W;\mathbb{Z}\G)$ twisted by $\psi$ and $\sigma(W)$ is the ordinary
signature of $W$ ~\cite{LS}. Thus the $\rho$-invariants should be thought of as \textbf{signature defects}. They were first used to detect non-slice knots in ~\cite{COT}. For a more thorough discussion see ~\cite[Section 2]{CT}~\cite[Section 2]{COT2}. All of the coefficient systems $\G$ in this paper will be of the form $\pi/\pi^{(n)}_r$ where $\pi$ is the fundamental group of a space. Hence all such $\G$ will be PTFA as above. Aside from the definition, a few crucial properties that we use in this paper are:
\begin{itemize}
\item [(1)] If $\phi$ factors through $\phi' : \pi_1(M) \to \G'$ where
$\G'$ is a subgroup of $\G$, then $\rho(M,\phi') = \rho(M,\phi)$.

\item [(2)] If $\phi$ is trivial (the zero map), then $\rho(M,\phi) = 0$.

\item [(3)] If $M=M_K$ is zero surgery on a knot $K$ and $\phi:\pi_1(M)\to \mathbb{Z}$ is the abelianization, then $\rho(M,\phi)$ is \textbf{denoted by $\boldsymbol{\rho_0(K)}$} and is equal to the integral over the circle of the Levine-Tristram signature function of $K$ ~\cite[Prop. 5.1]{COT2}.

\item [(4)] If $K$ is a slice knot or link and $\phi:M_K\to \G$ ($\G$ PTFA) extends over $\pi_1$ of a slice disk exterior then $\rho(M_K,\phi)=0$ by ~\cite[Theorem 4.2]{COT}.

\end{itemize}

We also make frequent use of the following elementary additivity result for $\rho$-invariants under certain kinds of infections.

Suppose the link $L$ is obtained from the link $R$ by infection, as described above, along a curve $\eta$ using the knot $K$. Let the zero surgeries on $L$, $R$, and $K$ be denoted $M_L$ $M_R$, $M_K$ respectively. Suppose $\phi:\pi_1(M_L)\to \G$ is a map to a metabelian PTFA group $\G$ ($\G^{(2)}=\{e\}$). Since $S^3-K$ is a submanifold of $M_L$, $\phi$ induces  a map on $\pi_1(S^3-K)$. Since the longitude of $K$ lies in $\pi_1(S^3-K)^{(2)}$, it lies in the kernel of $\phi$, so this induced map extends uniquely to a map that we call $\phi_K$  on $\pi_1(M_K)$. For the same reasons, $\phi$ induces a map on $\pi_1(M_R-\eta)$ that extends uniquely to $\phi_R$ on $\pi_1(M_R)$.

\begin{lem}[{\cite[Lemma 2.3]{CHL3}}, see also {\cite[Prop. 3.2]{COT2}}]\label{lem:additivity} In the notation of the previous paragraph,
$$
\rho(M_L,\phi)= \rho(M_R,\phi_R)+\rho(M_K,\phi_K).
$$
Moreover if $\eta\in \pi_1(M_R)^{(1)}$ then $\phi_K$ factors through $\Z$ so either $\rho(M_K,\phi_K)=\rho_0(K)$, or $\rho(M_K,\phi_K)=0$, according as $\phi_R(\eta)\neq 1$ or $\phi_R(\eta)= 1$.
\end{lem}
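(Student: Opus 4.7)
The plan is to realize the infection geometrically as a cobordism and invoke the interpretation of $\rho$-invariants as signature defects. Specifically, build a compact oriented $4$-manifold $E$ with $\p E = M_L \sqcup (-M_R) \sqcup (-M_K)$ by gluing $M_R \x I$ to $M_K \x I$ along a suitable $T^2 \x I$ neighborhood near $\eta \x \{1\} \subset M_R \x \{1\}$ and near $\mu_K \x \{1\} \subset M_K \x \{1\}$ (viewing $\mu_K$ as the core of the zero-surgery solid torus in $M_K$). The gluing is arranged so that the top boundary $(M_R \setminus \nu(\eta)) \cup (M_K \setminus \nu(\mu_K))$ is precisely $M_L$ under the standard infection identification $\mu_\eta \leftrightarrow \lambda_K$, $\lambda_\eta \leftrightarrow \mu_K$. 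Since $\phi_R$ and $\phi_K$ are by definition the unique extensions of the restrictions of $\phi$ to $M_R \setminus \nu(\eta) \sbq M_L$ and $M_K \setminus \nu(\mu_K) \sbq M_L$, a Seifert--Van Kampen computation shows that $\phi$, $\phi_R$, $\phi_K$ fit together into a single representation $\Psi : \pi_1(E) \to \G$ restricting correctly on each boundary component.

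Next I compute $\s(E)$ and $\s_\G^{(2)}(E, \Psi)$. A Mayer--Vietoris argument, using that the gluing region deformation retracts onto a circle and so contributes trivially to $H_2$, yields $H_2(E; \Q) \cong H_2(M_R; \Q) \oplus H_2(M_K; \Q)$. Every class is represented by a surface in a slice $M_R \x \{t\}$ or $M_K \x \{t\}$ chosen away from the gluing region; such surfaces have vanishing self-intersection in the product structure, and classes from opposite sides lie in disjoint halves of $E$, so $\s(E) = 0$. The identical analysis applied to the chain complex with $\Z \G$-coefficients -- again using that the $1$-dimensional gluing locus carries no twisted $H_2$ -- shows $\s_\G^{(2)}(E, \Psi) = 0$. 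The signature-defect identity $\rho(M, \phi) = \s_\G^{(2)}(W, \psi) - \s(W)$ then yields $\rho(M_L, \phi) = \rho(M_R, \phi_R) + \rho(M_K, \phi_K)$. The step requiring the most care is confirming that the twisted intersection form on $H_2(E; \Z\G)$ truly vanishes: one must check that lifts of slice surfaces to the $\G$-cover cannot interact through the gluing. This is resolved by noting that the gluing locus is $1$-dimensional, so a handle decomposition compatible with the product structure contributes no $2$-handles whose attaching circles cross the gluing in an essential way.

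For the moreover statement, the infection identification sends $\mu_K \leftrightarrow \lambda_\eta$, so inside $\pi_1(M_L)$ the meridian $\mu_K$ of $K$ is conjugate to the longitude $\lambda_\eta$, which is in turn freely homotopic in $M_R$ to $\eta$. Hence $\phi_K(\mu_K)$ is conjugate in $\G$ to $\phi_R(\eta)$. If $\eta \in \pi_1(M_R)^{(1)}$ then $\phi_R(\eta)$, and therefore $\phi_K(\mu_K)$, lies in $\G^{(1)}$, which is abelian because $\G$ is metabelian. Since $\pi_1(M_K)$ is normally generated by $\mu_K$, the image of $\phi_K$ lies in the normal closure of $\phi_K(\mu_K)$, which is contained in the normal subgroup $\G^{(1)}$. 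Abelianness of $\G^{(1)}$ forces $\phi_K$ to vanish on $\pi_1(M_K)^{(1)}$, so $\phi_K$ factors through $H_1(M_K) \cong \Z$. If $\phi_R(\eta) = 1$ then $\phi_K$ is the zero map and property (2) gives $\rho(M_K, \phi_K) = 0$. Otherwise $\phi_K(\mu_K) \neq 1$ in the torsion-free PTFA group $\G$, so the induced map $\Z \to \G$ is injective; properties (1) and (3) together then give $\rho(M_K, \phi_K) = \rho_0(K)$.
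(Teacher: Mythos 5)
Your strategy is the one used in the cited proof ([COT2, Prop.~3.2], [CHL3, Lemma~2.3], which is where this paper's proof lives): build a cobordism $E$ with $\p E \cong M_L \amalg (-M_R) \amalg (-M_K)$ carrying a single representation $\Psi$ extending $\phi$, $\phi_R$, $\phi_K$, and show that both the ordinary and the $L^{(2)}$-signature of $E$ vanish; your treatment of the ``moreover'' clause is complete and correct. However, the first half has two concrete problems. The gluing locus is misdescribed: a neighborhood of $\eta$ in $M_R$ is a solid torus $S^1\times D^2$, not a $T^2\times I$, and the correct construction identifies $\nu(\eta)\subset M_R\times\{1\}$ with the zero-surgery solid torus (whose core is $\mu_K$) in $M_K\times\{1\}$. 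If one literally glued along a thickened torus as you wrote, the two leftover solid tori would be glued to each other along their boundaries and the upper boundary would be $M_L$ together with an extra $S^1\times S^2$ component, so $\p E$ would not be what you assert. Everything you do afterwards tacitly uses the solid-torus gluing, so this is a fixable slip, but as written the construction and the claimed boundary are inconsistent.

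More seriously, the vanishing of $\s^{(2)}_\G(E,\Psi)$ — which you yourself identify as the delicate step — is not actually proved. Knowing that the gluing region has trivial twisted $H_2$ only gives injectivity of $H_2(M_R\times I;\Q\G)\oplus H_2(M_K\times I;\Q\G)\to H_2(E;\Q\G)$ in Mayer--Vietoris; the push-off argument needs \emph{surjectivity}, i.e.\ that no new classes come from the connecting map to $H_1(S^1\times D^2;\Q\G)$. That is where the hypotheses on $\G$ enter: if $\Psi(\eta)\neq 1$, then since $\G$ is PTFA (torsion-free, $\Q\G$ a domain) one gets $H_1(S^1\times D^2;\Q\G)=\ker(\Psi(\eta)-1)=0$; if $\Psi(\eta)=1$, then $\phi_K$ is trivial and $H_1(S^1\times D^2;\Q\G)\to H_1(M_K\times I;\Q\G)\cong\Q\G$ is an isomorphism because the core of the gluing torus is $\mu_K$, which generates $H_1(M_K;\Q)$. (The same point is silently needed in your untwisted computation of $H_2(E;\Q)$.) Your closing remark that a handle decomposition ``compatible with the product structure contributes no $2$-handles whose attaching circles cross the gluing in an essential way'' is not an argument and does not substitute for this. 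With that step replaced by the Mayer--Vietoris computation just described — after which $H_2(E)$, with either coefficient system, is generated by surfaces in interior slices $M_R\times\{t\}$ and $M_K\times\{t\}$, on which all equivariant intersections vanish by pushing off in the $I$-direction — your proof goes through and coincides with the one in the literature.
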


\section{Zero-th Order L$^{(2)}$-Signatures of Knots and Links}\label{sec:zeroordersigs}

Zero-th order L$^{(2)}$-signatures of a link $J$ will be those associated to \emph{abelian} representations of $\pi_1(M_J)$.

\begin{defn}\label{defn:zeroordersignatures} A \textbf{zero-th order $\boldsymbol{L^{(2)}}$-signature of a link} $J$ is the von Neumann $\rho$-invariant $\rho(M_J,f)$ where $f:\pi_1(M_J)\to A$ and $A$ is a free abelian quotient of $H_1(M_J)$. It will be denoted $\rho^f_0(J)$. If $J$ is a knot, there is a unique zero-th order signature denoted $\rho_0(J)$ (except in the degenerate case that $f=0$, in which case $\rho(M_J,f)=0$).
\end{defn}

The zero-th order signature of a knot $K$ is known to be equal to the average of the Levine-Tristram signatures of the knot ~\cite[Proposition 5.1]{COT2}. The latter are integer-valued signatures, one for each norm one complex number, that are associated to the infinite cyclic covering space of the knot exterior ~\cite[p.242]{L5}. It is a result of Levine that if $K$ is a slice knot then all but a finite number of the Levine-Tristram signatures vanish. It follows that the average, $\rho_0(K)$, vanishes for any slice knot. Similarly, if $J$ is a boundary link then the zero-th order $L^{(2)}$-signature associated to the abelianization map $\pi_1(M_J)\to \Z^m$ is the integral over the $m$-torus of the Levine-Tristram signatures of $J$ (same proof as ~\cite[Proposition 5.1]{COT2}). If $J$ is not a boundary link, various analogues of the Levine-Tristram signatures have been defined by Cooper, Smolinsky and, most notably, Cimasoni-Florens ~\cite{Sm}~\cite{CiF}. Presumably each zero-th order $L^{(2)}$-signature is such an average, but this has not been specifically addressed in the literature.


\section{First-Order L$^{(2)}$-Signatures for Knots}\label{sec:firstorder sigs}

 First-order $L^{(2)}$-signatures of a knot or link $J$ will be associated to \emph{metabelian} representations of $\pi_1(M_J)$. In this section we focus on knots.

 If $K$ is an algebraically slice knot (implying that the zero-th order obstructions are zero) then Casson-Gordon defined ``signature invariants'' for $K$ that also obstructed its being a slice knot ~\cite{CG1}\cite{CG2}. These invariants take the form of \emph{sets} of integers, only \emph{some} of which need vanish for a slice knot. They are associated to metabelian covering spaces of the knot exterior. Further metabelian signatures were defined by C. Letsche, Cochran-Orr-Teichner, and S. Friedl ~\cite{Let}\cite{COT}\cite{Fr2}. All of these metabelian invariants are defined as sets that are parametrized, loosely speaking, by the number of ways in which the zero-th order obstructions vanish.

We extend these (more precisely the $L^{(2)}$ versions) to a larger class. We shall define the first-order signatures for a link $J$ to be von Neumann $\rho$-invariants associated to certain metabelian representations of $\pi_1(M_J)$. Not every metabelian representation need be considered.

Suppose $K$ is an oriented knot and let $G=\pi_1(M_K)$. Note that since the longitude of $K$ lies in $\pi_1(S^3-K)^{(2)}$,
$$
\mathcal{A}_0(K)\equiv G^{(1)}/G^{(2)}\otimes_{\mathbb{Z}[t,t^{-1}]}\mathbb{Q}[t,t^{-1}]
$$
Each submodule $P\subset \mathcal{A}_0(K)$ corresponds to a unique metabelian quotient of $G$,
$$
\phi_P:G\to G/\tilde{P},
$$
by setting
$$
\tilde{P}\equiv \text{kernel}(G^{(1)}\to G^{(1)}/G^{(2)}\to \mathcal{A}_0 \to \mathcal{A}_0/P).
$$
\noindent (Note that $G^{(2)}\subset \tilde{P}$ so $G/\tilde{P}$ is metabelian.) Therefore to any such submodule $P$ there corresponds a real number, the Cheeger-Gromov von Neumann $\rho$-invariant, $\rho(M_K, \phi_P:G\to G/\tilde{P})$.

\begin{defn}\label{defn:firstordersignatures} The \textbf{first-order $\boldsymbol{L^{(2)}}$-signatures of a knot} $K$ are the real numbers
$\rho(M_K, \phi_P)$ where $P$ is an isotropic submodule of $\mathcal{A}_0(K)$ with respect to $\mathcal{B}\ell_0^K$.
\end{defn}

If $\Delta_K(t)=1$ then $\mathcal{A}_0(K)=0$ and $G^{(1)}=G^{(2)}$. It follows that the only first-order signature of $K$ is actually $\rho_0(K)$ which is zero since $K$ has zero classical signatures almost everywhere. The set of first-order signatures of a knot is an isotopy invariant of the knot. None of the individual first-order signatures is a concordance invariant.

Suppose $K$ is a slice knot, $\Delta$ is a slice disk for $K$ and $V=B^4-\Delta$. Then $\partial V=M_K$. Set
\begin{equation}\label{eq:defP}
P_\Delta=\text{ker}\left(H_1(M_K;\Q[t,t^{-1}])\to H_1(V;\Q[t,t^{-1}])\right).
\end{equation}
Then it is well-known that $P_\Delta$ is a Lagrangian for $\mathcal{B}\ell_0^K$. In this case we say that the Lagrangian $P_\Delta$ \textbf{corresponds to the slice disk}, $\Delta$.

\
\begin{thm}[{~\cite[Thms 4.2, 4.4]{COT}}]\label{thm:firstordersigs=0} If $K$ is a slice knot then, for any Lagrangian $P_\Delta$ that corresponds to a slice disk $\Delta$, the corresponding first-order $L^{(2)}$-signature of $K$ vanishes. Thus if $K$ is a slice knot then the set of all first-order signatures corresponding to Lagrangians contains $0$.
\end{thm}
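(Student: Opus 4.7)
The approach is to realize the metabelian quotient $\phi_{P_\Delta}\colon G \to G/\tilde{P}_\Delta$ as a factor, through a subgroup inclusion, of a PTFA representation of $\pi_1(V)$ that exists automatically from the slice disk, and then to invoke property (4) of Section~\ref{sec:notation}. Concretely, set $\pi = \pi_1(V)$ and take $\Gamma = \pi/\pi^{(2)}_r$, which is PTFA by the general fact cited in Section~\ref{sec:notation}. Let $\psi\colon \pi \to \Gamma$ be the projection and $\phi = \psi\circ i_*\colon G \to \Gamma$, where $i_*$ is induced by the inclusion $M_K \hookrightarrow V$. Since $\phi$ extends over $\pi_1(V)$ by construction, property (4) immediately gives $\rho(M_K,\phi)=0$.

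The substance of the proof lies in checking that $\ker(\phi) = \tilde{P}_\Delta$, so that $\phi$ factors as $G \twoheadrightarrow G/\tilde{P}_\Delta \hookrightarrow \Gamma$. Since $i_*$ induces an isomorphism $G/G^{(1)} \cong \Z \cong \pi/\pi^{(1)}$, one has $i_*^{-1}(\pi^{(1)})=G^{(1)}$, so it suffices to work with $g \in G^{(1)}$. For such $g$, $\phi(g)=1$ is equivalent to $i_*(g)\in \pi^{(2)}_r$, which by definition of the rational derived series is equivalent to the class of $i_*(g)$ in $\pi^{(1)}/\pi^{(2)}$ being $\Z$-torsion, and hence to its image in $(\pi^{(1)}/\pi^{(2)})\otimes_\Z\Q \cong H_1(V;\Q[t,t^{-1}])$ vanishing. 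Because the longitude of $K$ lies in $G^{(2)}$, the map $G^{(1)}/G^{(2)}\to \pi^{(1)}/\pi^{(2)}$, after tensoring with $\Q[t,t^{-1}]$, is precisely the inclusion-induced map $\mathcal{A}_0(K)\to H_1(V;\Q[t,t^{-1}])$, whose kernel is $P_\Delta$ by (\ref{eq:defP}). Hence $\ker(\phi)\cap G^{(1)} = \tilde{P}_\Delta$, as needed.

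With this factoring in hand, property (1) of Section~\ref{sec:notation} (invariance of $\rho$ under passage to a subgroup of the target) yields $\rho(M_K,\phi_{P_\Delta}) = \rho(M_K,\phi) = 0$, which is the first assertion. The second assertion then follows at once from the standard fact, recalled just before the statement, that every slice disk gives rise to a Lagrangian. I expect the only real obstacle to be the middle paragraph --- matching the kernel of $\phi$ to $\tilde{P}_\Delta$ by tracking the rational derived series through the Alexander module of the slice-disk exterior --- while everything else is formal manipulation with von Neumann $\rho$-invariants.
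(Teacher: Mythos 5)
Your proposal is correct and follows essentially the same route as the paper, which proves this statement by citing \cite[Thms 4.2, 4.4]{COT}: restrict the coefficient system $\pi_1(V)\to\pi_1(V)/\pi_1(V)^{(2)}_r$ to $\pi_1(M_K)$, identify its kernel with $\tilde{P}_\Delta$ via $H_1(V;\Q[t,t^{-1}])\cong \pi_1(V)^{(1)}/\pi_1(V)^{(2)}_r\otimes\Q$ (exactly the identification the paper itself uses in the proof of Proposition~\ref{prop:derivisnullbord}), and apply the vanishing of $\rho$ for representations extending over the slice disk exterior together with the subgroup property of $\rho$.
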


The signatures of this theorem are denoted \textbf{metabelian $\boldsymbol{L^{(2)}}$-signatures}, and first appeared in ~\cite{COT}. Their relationship with Casson-Gordon invariants and other ``metabelian signatures'' was beautifully explained by S. Friedl ~\cite{Fr2}. The analogue of this theorem is also well-known for the other metabelian signatures. Indeed, there are even finer versions of first-order $L^{(2)}$-signatures obtained by projecting $\mathcal{A}_0(K)/P$ onto any proper quotient (closed under the involution) (see for example ~\cite{Fr2}). These finer versions do not yet play a role in second-order signatures, so they are omitted here.

It follows from Theorem \ref{thm:firstordersigs=0} that the set of first-order signatures of a knot obstructs its being a slice knot. For this purpose alone it is not necessary to consider first-order signatures that correspond to isotropic submodules that are not Lagrangians. However, for second-order signatures, we seem to need this general notion. Note that $P=0$ is always isotropic and never a Lagrangian (unless $\Delta_K(t)=1$), so we give a special name to the signature corresponding to this case.

\begin{defn}[{\cite{CHL3}\cite[Section 4]{CHL4}}]\label{defn:rho1} $\boldsymbol{\rho^1(K)}$ is the first-order $L^{(2)}$-signature given by the Cheeger-Gromov invariant $\rho(M_K, \phi:G\to G/G^{(2)})$.
\end{defn}

\textbf{Open Problem}: Find methods to calculate the metabelian signature $\rho^1(K)$.

\begin{ex}\label{ex:knotfirst-ordersigs} Consider the knot $K$ in Figure~\ref{fig:exampleknothighersigs}, which is a genus one algebraically slice knot (whose Alexander polynomial is not $1$). For such a knot, any isotropic submodule $P$ must have $\Q$-rank $0$ or $1$. In the former case, $P=0$, and in the latter case $P$ is a Lagrangian. It is easy to see that such a knot has precisely two Langrangians.  Thus $K$ has precisely $3$ first-order signatures, two corresponding to the Lagrangians $P_1$ and $P_2$ and the third corresponding to $P_3=0$.
\begin{figure}[htbp]
\setlength{\unitlength}{1pt}
\begin{picture}(200,160)
\put(0,0){\includegraphics[height=150pt]{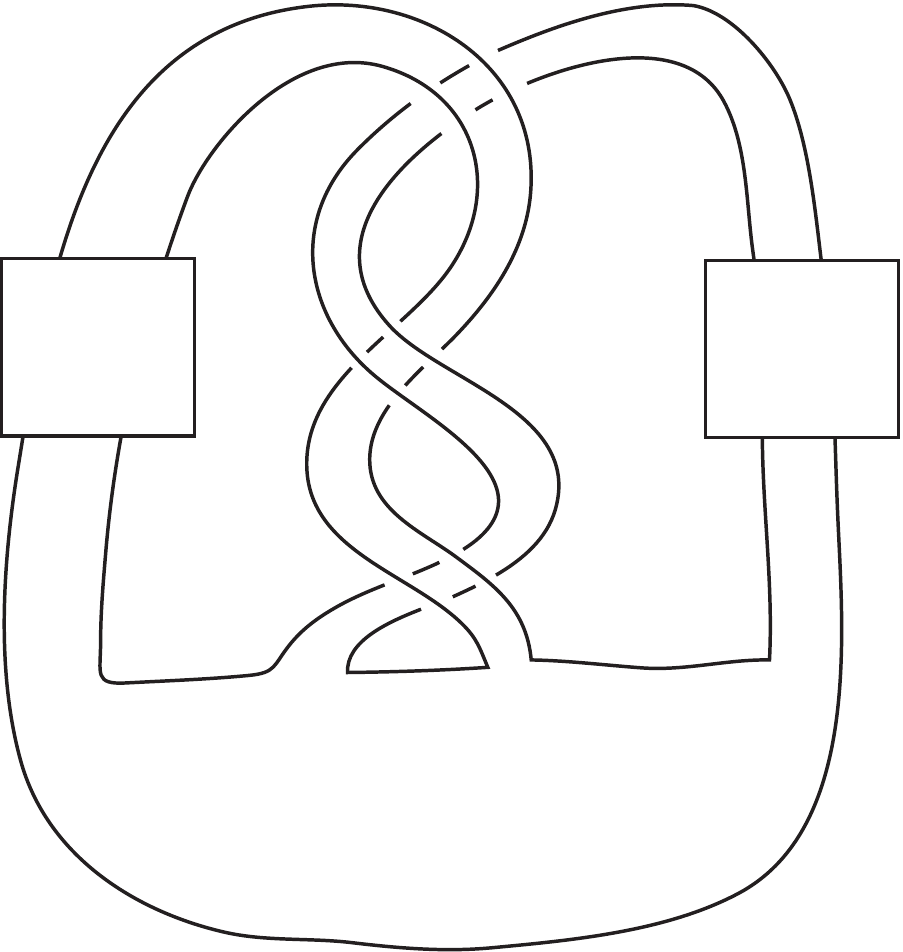}}
\put(-50,70){$K=$}
\put(7,92){$J_1$}
\put(122,92){$J_2$}
\end{picture}
\caption{A genus one algebraically slice knot $K$}\label{fig:exampleknothighersigs}
\end{figure}
Using Lemma~\ref{lem:additivity}, it was shown in ~\cite[Example 3.3]{CHL3} that
 that the set of first-order signatures of $K$ is
 $$
\{\rho^1(9_{46})+\rho_0(J_1)+\rho_0(J_2),\rho_0(J_1),\rho_0(J_2)\}. $$
Here $9_{46}$ is the ribbon knot obtained by setting $J_1=J_2=$unknot. So far we have been unable to calculate $\rho^1(9_{46})$ but certainly it is true that for any $J_i$ such that $\rho_0(J_i)\neq 0$ and $\rho_0(J_1)+\rho_0(J_2)\neq -\rho^1(9_{46})$ (which may be guaranteed for example by making $\rho_0(J_i)$ sufficiently large) then \textbf{none} of the first-order signatures of $K$ is zero.
\end{ex}

\begin{ex}\label{ex:figeight} Consider the knot $K$ in Figure~\ref{fig:examplehighersigsfigeight} which is of order two in the algebraic concordance group. A genus one knot that is \emph{not} zero in the rational algebraic concordance group (that is, there is no Lagrangian) has precisely one first-order signature, namely $\rho^1(K)$, since any \emph{proper} submodule $P$ of the rational Alexander module satisfying $P\subset P^\perp$ would have to be a Lagrangian. Using Lemma~\ref{lem:additivity} and the amphichirality of the figure-eight knot, it was shown in ~\cite[Example 3.5]{CHL3} that
$$
\rho^1(K)=2\rho_0(J).
$$
\begin{figure}[htbp]
\begin{picture}(185,118)
\put(10,0){\includegraphics{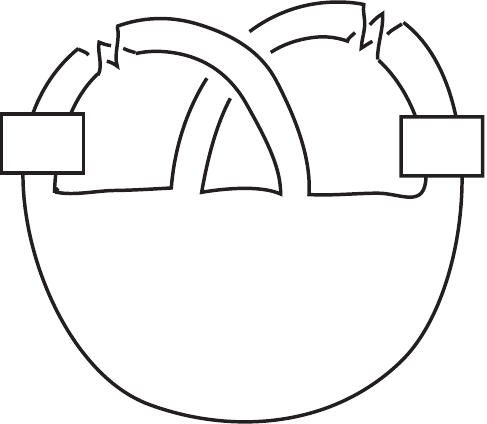}}
\put(17,77){$J$}
\put(135,77){$J$}
\end{picture}
\caption{} \label{fig:examplehighersigsfigeight}
\end{figure}
\end{ex}

\begin{ex}\label{ex:nonzerofirst-ordersigs} Consider the fully amphichiral ribbon knot $8_9$, pictured on the left-hand side of Figure~\ref{fig:8_9} (a ribbon move is shown by the dotted arc)~\cite{Lam}. Since the Alexander polynomial of $8_9$ is the product if 2 distinct primes, the Alexander module is cyclic. Thus $\mathcal{A}_0(8_9)$ has precisely $3$ proper submodules ~\cite[p.279]{Ka3}. Two are Lagrangians that correspond to slice disks and the third is $P=0$. It was shown in ~\cite[Example 4.4]{CHL4} that each of the corresponding first-order signatures is zero.
\begin{figure}[htbp]
\setlength{\unitlength}{1pt}
\begin{picture}(200,160)
\put(-45,0){\includegraphics{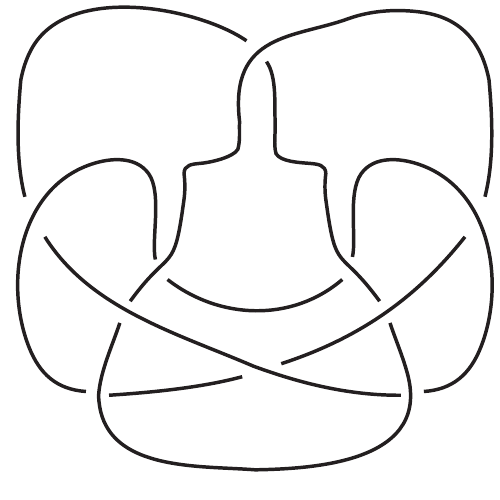}}
\put(145,0){\includegraphics{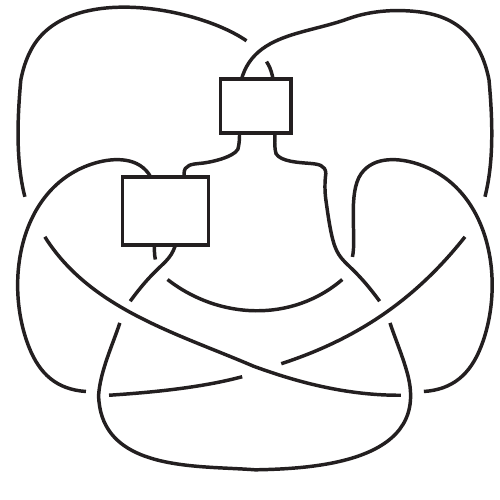}}
\put(-70,80){$8_9~=$}
\put(22,136){....}
\put(212,111){$K_1$}
\put(186,80){$K_1$}
\put(125,80){$K=$}
\end{picture}
\caption{}\label{fig:8_9}
\end{figure}

By contrast, consider the family of algebraically slice knots, $K$, shown on the right-hand side of Figure~\ref{fig:8_9}. Since $K$ has the same Alexander module and classical Blanchfield linking form as $8_9$, it also has $3$ first-order signatures. But the values of its first-order signatures are altered by the zero-th order signature of the knot $K_1$. Using Lemma~\ref{lem:additivity}, it was shown in ~\cite[Example 4.6]{CHL4} that the first-order signatures of $K$ are $\{2\rho_0(K_1),\rho_0(K_1),2\rho_0(K_1)\}$. Thus if $\rho_0(K_1)\neq 0$ then all of the first-order signatures of $K$ are \emph{non-zero}.
\end{ex}

\section{Derivatives of Knots}\label{sec:derivatives}

In this section we define the (partial) derivative of a knot with respect to a metabolizer of its Seifert form. This formalizes notions that have been implicit in the subject of knot concordance since the early work of Levine. The dual notion of an antiderivative plays a less central role and is discussed in Section~\ref{sec:antiderivatives}.

Suppose $K$ is a knot and $\Sigma$ is a genus $g$ Seifert surface for $K$. A \textbf{metabolizer}, $\mathfrak{m}$, for $K$ is a rank $g$ \emph{summand} of $H_1(\Sigma;\mathbb{Z})\cong \mathbb{Z}^{2g}$ on which the Seifert form vanishes. If $K$ is an algebraically slice knot then any Seifert surface admits a metabolizer, but there may be many (even an infinite number of) distinct metabolizers. Since the Seifert form vanishes on $\mathfrak{m}$, the intersection form vanishes on $\mathfrak{m}$. Therefore we can realize $\mathfrak{m}$ by a set of $g$ disjoint oriented simple closed curves $\{J_1,...,J_g\}$ that is a basis for $\mathfrak{m}\subset H_1(\Sigma)$. In this way (as is well known) any metabolizer can be realized (though not in a unique way).

\begin{defn}\label{defn:geomderivative} If $K$ is an algebraically slice knot, $\Sigma$ is a genus $g$ Seifert surface for $K$, and $\mathfrak{m}$ is a metabolizer for the Seifert form on $H_1(\Sigma)$, then a \textbf{derivative of $\boldsymbol{K}$ with respect to $\boldsymbol{\mathfrak{m}}$} is a $g$ component oriented link $J$ embedded in $\Sigma$ where $\{[J_1],...,[J_g]\}$ is a basis of $\mathfrak{m}$. It is denoted by $\frac{\partial K}{\partial \mathfrak{m} }$.
\end{defn}

\begin{ex}\label{ex:arbgenusonederiv} Any genus one algebraically slice knot $K$ admits a Seifert matrix of the form
$$
\left(\begin{matrix}0 & \ell \cr \ell+1 & t
\end{matrix}\right)
$$
and moreover is isotopic to the form shown on the left-hand side of Figure~\ref{fig:arbgenus1AS} where $\ell$ is the number of full twists between the bands, $t$ is the number of full twists of the right-hand band, and $L$ is a $2$-component string link with linking number zero into which the two bands are tied.

\begin{figure}[htbp]
\setlength{\unitlength}{1pt}
\begin{picture}(327,151)
\put(0,0){\includegraphics{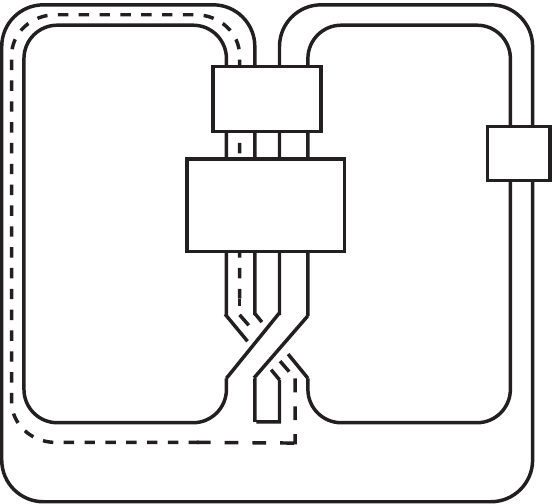}}
\put(194,0){\includegraphics{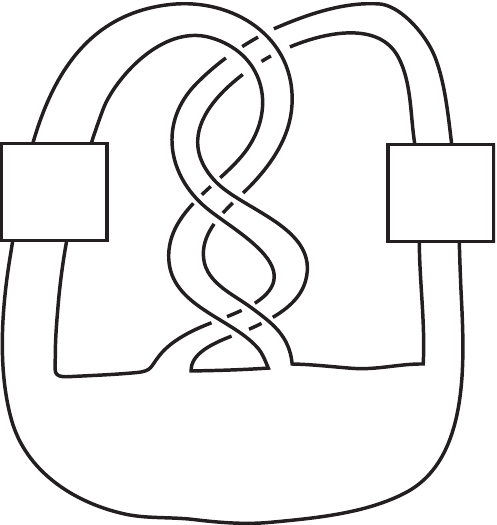}}
\put(147,98){$t$}
\put(75,113){$\ell$}
\put(74,83){$L$}
\put(88,9){$\frac{\partial K}{\partial \mathfrak{m} }$}
\put(204,93){$J_1$}
\put(315,93){$J_2$}
\put(166,83){$K'\equiv$}
\put(-30,93){$K\equiv$}
\end{picture}
\caption{An arbitrary genus one algebraically slice knot $K$; and a very special example $K'$}\label{fig:arbgenus1AS}
\end{figure}

If $\mathfrak{m}$ is the metabolizer generated by the zero-twisted band then $\frac{\partial K}{\partial \mathfrak{m} }$ is merely $L_1$, the first component knot of the link that is the closure of $L$, as shown by the dashed curve. If $t=0$ then $\frac{\partial K}{\partial \mathfrak{m}' }$ for the other metabolizer $\mathfrak{m}'$ is a curve of the knot type of $L_2$ (going over the other band). In general the $\frac{\partial K}{\partial \mathfrak{m}' }$ will be a knot that goes over both bands a number of times that depends on $t$ and $\ell$. A very special case of this situation is shown by the knot $K'$ on the right-side of Figure~\ref{fig:arbgenus1AS}. In this case, $L=\{J_1,J_2\}$ is a \textbf{split} link and the resulting knot $K'$ is a satellite knot. This is one of the ways in which the present paper is an improvement over the authors previous work ~\cite{CH3} in which only knots similar to $K'$ are handled.
\end{ex}

\begin{ex}\label{ex:ribbon} Suppose $K$ is a ribbon knot that bounds a ribbon disk $D$ in $S^3$ with $g$ ribbon singularities. To $D$ we may associate a Seifert surface $\Sigma$ by locally de-singularizing each ribbon singularity. For each ribbon singularity, choose a small sub-disk of $D$ containing the corresponding slit. The $g$-component trivial link formed by the boundaries of these disks is a derivative of $K$. Hence any ribbon knot admits a derivative that is a trivial link. In fact it can be shown that any Seifert surface for a ribbon knot, after hollow handle enlargements, admits a metabolizer represented by a trivial link.
\end{ex}

More examples wherein the derivatives are links are given below.

It is a serious problem that for a higher-genus algebraically slice knot there are usually an infinite number of metabolizers. It is better to consider Lagrangians, which are quite often finite in number.

\begin{defn}\label{def:represents} Suppose $P\subset \mathcal{A}_0(K)$ is a Lagrangian. The metabolizer $\mathfrak{m}$ \textbf{represents P} if the image of $\mathfrak{m}$ under the map
$$
H_1(\Sigma;\mathbb{Z})
\overset{id\otimes 1}{\hookrightarrow}H_1(\Sigma;\mathbb{Z})\otimes \mathbb{Q}\overset{i_*}\twoheadrightarrow \mathcal{A}_0(K)
$$
spans $P$ as a $\Q$-vector space. This is sometimes denoted $\mathfrak{m}_P$. (Here, to define $i_*$, we have in mind fixing a lift of $\Sigma$ to the infinite cyclic cover. It is easy to see that the definition is independent of this choice.)
\end{defn}

\begin{lem}\label{lem:lagrangianisrep} Every Lagrangian is represented by some metabolizer.
\end{lem}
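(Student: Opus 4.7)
The plan is to exploit the classical relationship, due essentially to Kearton, between the Seifert form on $H_1(\Sigma)$ and the Blanchfield form on $\mathcal{A}_0(K)$. First I would reduce to the case in which the Seifert matrix $V$ of $\Sigma$ is non-singular by enlarging $\Sigma$ via ambient $1$-handle surgery if necessary; this is a standard stabilization that does not affect the existence question for metabolizers. In this non-degenerate setting, the natural map $i_*\colon H_1(\Sigma;\Q)\to \mathcal{A}_0(K)$, obtained by lifting $\Sigma$ to the infinite cyclic cover, is a $\Q$-linear isomorphism of $2g$-dimensional $\Q$-vector spaces.

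Next I would invoke Kearton's formula expressing the Blanchfield pairing in terms of the Seifert matrix: for $x,y \in H_1(\Sigma;\Q)$,
$$\mathcal{B}\ell_0(i_*(x), i_*(y)) \;=\; (1-t)\, x^T (tV - V^T)^{-1} y \pmod{\Q[t,t^{-1}]}.$$
From this one extracts the correspondence that is the heart of the argument: a $\Q$-subspace $P'\subset H_1(\Sigma;\Q)$ is a metabolizer of the Seifert form (that is, $x^T V y = 0$ for all $x,y \in P'$ and $\dim_\Q P' = g$) if and only if $i_*(P')$ is a Lagrangian of $\mathcal{B}\ell_0$. The ``$\Leftarrow$'' direction is the subtler one, requiring a careful expansion of $(tV-V^T)^{-1}$ as a formal Laurent series in $t$ and verification that the vanishing of the fractional part of $(1-t)x^T(tV-V^T)^{-1}y$ modulo $\Q[t,t^{-1}]$ forces $x^T V y = 0$.

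Granting the correspondence, the proof of the lemma proceeds as follows. Given a Lagrangian $P\subset \mathcal{A}_0(K)$, I set $P' \defeq i_*^{-1}(P) \subset H_1(\Sigma;\Q)$. Since $P$ has $\Q$-dimension $g$ (half of $\dim_\Q \mathcal{A}_0(K)=2g$, by nonsingularity of $\mathcal{B}\ell_0$) and $i_*$ is a $\Q$-isomorphism, $P'$ is a $g$-dimensional rational subspace of $H_1(\Sigma;\Q)$; the correspondence above supplies that the Seifert form vanishes on $P'$. Then $\mathfrak{m} \defeq P' \cap H_1(\Sigma;\Z)$ is, since $P'$ is defined over $\Q$, automatically a primitive (saturated) $\Z$-summand of $H_1(\Sigma;\Z)$ of rank $g$, on which the Seifert form vanishes. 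Thus $\mathfrak{m}$ is a metabolizer, and by construction $\mathfrak{m}\otimes_\Z \Q = P'$ maps under $i_*$ onto $P$, so $\mathfrak{m}$ represents $P$ in the sense of Definition~\ref{def:represents}.

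The main obstacle is the correspondence in the second step, namely that vanishing of the Blanchfield pairing on $i_*(P')$ forces vanishing of the Seifert form on $P'$. If a direct series computation is unpalatable, one can instead verify only the easier ``$\Rightarrow$'' direction and then argue by dimension count: any metabolizer of $V$ produces a Lagrangian of $\mathcal{B}\ell_0$ of $\Q$-dimension $g$, and since the space of Lagrangians of $\mathcal{B}\ell_0$ and the space of rational metabolizers of $V$ both sit as the top-dimensional isotropic strata in their respective nonsingular forms (with $i_*$ restricting to a bijection between these strata), every Lagrangian is hit.
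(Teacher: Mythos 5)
Your opening reduction is not available, and the degenerate case you try to discard is exactly what makes this lemma hard. Stabilizing $\Sigma$ by ambient $1$-handle addition leaves $\Delta_K(t)$ unchanged while raising the genus, and a Seifert matrix $V$ of a genus $g$ surface is nonsingular if and only if $\deg\Delta_K(t)=2g$; so enlarging the surface can never produce a nonsingular Seifert matrix (it does the opposite), and for any knot whose Seifert genus exceeds $\tfrac{1}{2}\deg\Delta_K(t)$ (e.g.\ genus one knots with $\Delta_K(t)=1$) \emph{no} Seifert surface has nonsingular Seifert matrix. The paper proves the lemma for an arbitrary fixed Seifert surface, and its applications use this flexibility (the bound in Theorem~\ref{thm:main2} involves the genus of the surface actually used). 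When $d=\tfrac{1}{2}\deg\Delta_K(t)<g$, the map $i_*\colon H_1(\Sigma;\Q)\to\mathcal{A}_0(K)$ has a $(2g-2d)$-dimensional kernel, so there is no isomorphism along which to pull $P$ back; handling this kernel is precisely why the paper's proof works in $H_1(S^3-\Sigma)$, splits $(i_*)^{-1}(P)$ as $\ker(i_*)\oplus V$, and then needs the block-matrix argument (claims 1 and 2) to show that the resulting $\mathfrak{m}$ actually \emph{represents} $P$ in the sense of Definition~\ref{def:represents} --- a step with no counterpart in your outline once nonsingularity is dropped.

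Even granting a nonsingular $V$, your key "correspondence" is misstated. The Kearton-type formula computes $\mathcal{B}\ell_0$ on the generators of $\mathcal{A}_0(K)$ coming from the linking-dual basis of $H_1(S^3-\Sigma)$, not on $i_*$ of surface classes; these differ by a factor of $(t-1)$ (compare $[\beta_i]=(t-1)[a_i]$ in Proposition~\ref{prop:lagrangprops}), and the Kearton--Hillman argument the paper cites concludes vanishing of the Seifert form on the basis of $H_1(\Sigma)$ that is linking-dual to complement curves whose images lie in $P$ --- it does not directly give vanishing of the Seifert form on $i_*^{-1}(P)$, which is the statement your construction needs. Your fallback is circular: asserting that $i_*$ "restricts to a bijection between the strata" of rational metabolizers and Blanchfield Lagrangians is exactly the surjectivity the lemma asks for, and a dimension count of maximal isotropic subspaces does not supply it (distinct metabolizers can have the same image, and nothing a priori rules out Lagrangians outside the image). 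So both the reduction and the central equivalence need to be replaced; the paper's route through $H_1(S^3-\Sigma)$ and the explicit change-of-basis matrix $N$ is doing real work that your proposal elides.
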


This result is surprisingly difficult (for us) to prove and the casual reader might want to skip the proof which we postpone until the end of this section. In the case of a genus one knot the proof is much easier. If $K$ is a genus one slice knot and $P$ corresponds to a slice disk, then the proof is quite short. For then, by the standard argument, there exists a simple closed curve, $J$, on any genus one Seifert surface that dies in the rational Alexander module of the exterior of the slice disk, and hence lies in $P$. We can assume that $\Delta_K(t)\neq 1$. Since rank$_\Q P=1$, we are done unless $J$ were zero in $\mathcal{A}_0(K)$, which contradicts the well-known fact $H_1(\Sigma;\Q)$ spans the $2$-dimensional vector space $\mathcal{A}_0(K)$.

Note that the definition of a derivative of a knot does not require one to choose a symplectic basis for $H_1(\Sigma;\mathbb{Z})$. However, since $\mathfrak{m}$ is a Lagrangian subspace of $H_1(\Sigma;\mathbb{Z})$ with respect to the intersection form we can extend any ordered basis $\{a_1,...,a_g\}$ of $\mathfrak{m}$  to a symplectic basis $\{a_1,...,a_g,b_{1},...b_{g}\}$ for $H_1(\Sigma;\mathbb{Z})$ (in our notation $a_i$ is intersection dual to $b_{i}$). Moreover we can realize  these by oriented simple closed curves $\{J_1,...,J_g,J_{g+1},...,J_{g+g}\}$ on $\Sigma$.  This induces a ``disk-band'' form on $\Sigma$ as shown in Figure \ref{fig:funkytwisty} below.

\begin{figure}[htbp]
\setlength{\unitlength}{1pt}
\begin{picture}(165,100)
\put(-60,0){\includegraphics{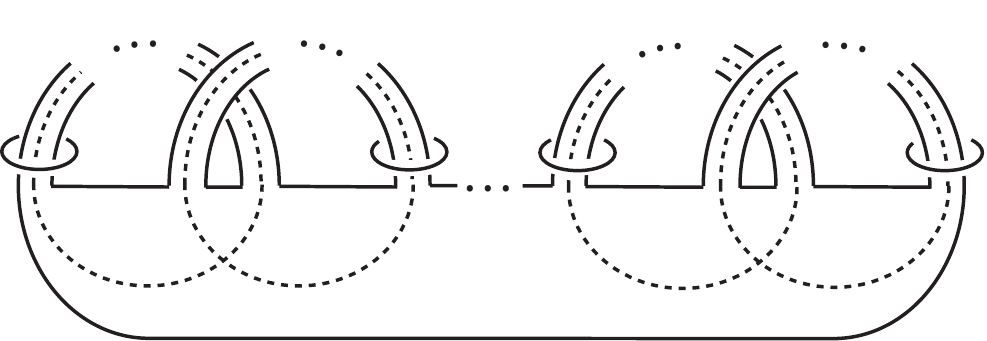}}
\put(-77,53){$\alpha_1$}
\put(-26,21){$a_1$}
\put(22,21){$b_1$}
\put(32,52){$\beta_1$}
\put(122,52){$\alpha_g$}
\put(129,22){$a_g$}
\put(176,22){$b_g$}
\put(228,51){$\beta_g$}
\end{picture}
\caption{Disk-band form for $\Sigma$}\label{fig:funkytwisty}
\end{figure}

\begin{prop}\label{prop:lagrangprops} Suppose $P\subset \mathcal{A}_0(K)$ is a Lagrangian. Then for any Seifert surface $\Sigma$, any metabolizer $\mathfrak{m}$ representing $P$ and any symplectic basis $\{a_1,..,a_{g},b_1,..b_g\}$ of $H_1(\Sigma)$ with $\{a_1,...,a_g\}$ a basis for $\mathfrak{m}$ we have
\begin{itemize}
\item [1.] $\{a_1,...,a_g\}$  spans $P$ in the rational vector space $\mathcal{A}_0(K)$.
\item  [2.] $\{\phi(\alpha_{1}),...,\phi(\alpha_{g})\}$ spans $\mathcal{A}_0(K)/P$, where $\{\alpha_{1},..,\alpha_{g},\beta_1,..,\beta_g\}$ is the basis of $H_1(S^3-\Sigma)$ that is dual to $\{a_1,..,a_{g},b_1,..,b_g\}$ under linking number in $S^3$ and

$$
\phi: H_1(S^3-\Sigma;\Z)\hookrightarrow H_1(S^3-\Sigma;\Z)\otimes \Q\overset{i_*}\twoheadrightarrow \mathcal{A}_0(K).
$$
\end{itemize}
\end{prop}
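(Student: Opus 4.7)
My plan is to dispatch (1) as essentially definitional and to prove (2) via the standard Mayer--Vietoris presentation of the rational Alexander module, combined with the block form of the Seifert matrix forced by $\mathfrak{m}$ being a metabolizer.

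For (1), by the definition of ``$\mathfrak{m}$ represents $P$'', the image of $\mathfrak{m}$ under the composition $H_1(\Sigma;\Z)\hookrightarrow H_1(\Sigma;\Q)\to \mathcal{A}_0(K)$ equals $P$ as a $\Q$-vector space; since $\{a_1,\dots,a_g\}$ is a $\Z$-basis for the summand $\mathfrak{m}$, the images $i_*(a_1),\dots,i_*(a_g)$ span $P$ as a $\Q$-vector space, giving (1).

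For (2), I would recall the Mayer--Vietoris presentation of $\mathcal{A}_0(K)$ obtained by cutting the infinite cyclic cover $\widetilde{X}$ of $X = S^3-K$ along the lifts of $\Sigma$:
\begin{equation*}
H_1(\Sigma;\Q)\otimes \Q[t,t^{-1}] \xrightarrow{\,i_+ - t\,i_-\,} H_1(S^3-\Sigma;\Q)\otimes \Q[t,t^{-1}] \xrightarrow{\,\phi\,} \mathcal{A}_0(K) \to 0.
\end{equation*}
Thus $\mathcal{A}_0(K)$ is $\Q[t,t^{-1}]$-generated by $\{\phi(\alpha_i),\phi(\beta_j)\}$ with relations encoded by $V^T - tV$, where $V$ is the Seifert matrix in the symplectic basis $(a_1,\dots,a_g,b_1,\dots,b_g)$. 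Because the Seifert form vanishes on $\mathfrak{m}$, $V$ has block form $V = \left(\begin{smallmatrix} 0 & A \\ B & C \end{smallmatrix}\right)$ with $B = A^T + I$.

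Write $x = (\phi(\alpha_i))^T$ and $y = (\phi(\beta_j))^T$. The vanishing of the top-left block of $V$ implies that the first $g$ relations of the presentation involve only the $\phi(\beta_j)$'s; a short manipulation reduces them to an identity of the form $\big(p(t)\, A + q(t)\, I\big)\, y = 0$ in $\mathcal{A}_0(K)^g$ for suitable polynomials $p,q\in\Q[t,t^{-1}]$, which rearranges to express each $y_i$ as a $\Q[t,t^{-1}]$-multiple of $(Ay)_i$. Next, unwinding the pushoff formula yields $i_*(a_k) = \phi\circ i_+(a_k) = \sum_j A_{kj}\phi(\beta_j) = (Ay)_k$; by (1), the components of $Ay$ lie in $P$. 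Since $P$ is a $\Q[t,t^{-1}]$-submodule of $\mathcal{A}_0(K)$ (being a Lagrangian of the sesquilinear Blanchfield form), the relation $y_i \in \Q[t,t^{-1}]\cdot(Ay)_i$ forces $\phi(\beta_j)\in P$ for every $j$. As $\{\phi(\alpha_i),\phi(\beta_j)\}$ generate $\mathcal{A}_0(K)$, the images of $\{\phi(\alpha_1),\dots,\phi(\alpha_g)\}$ generate $\mathcal{A}_0(K)/P$, proving (2).

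The main technical obstacle I anticipate is keeping the Mayer--Vietoris conventions aligned: the sign of the relation matrix and the direction of the $t$-action depend on orientation choices and on which pushoff $i_\pm$ is used to identify $H_1(\Sigma)$ with its image in $\mathcal{A}_0(K)$. One must verify that the identification $i_*(a_k) = (Ay)_k$ holds up to a unit of $\Q[t,t^{-1}]$, which suffices since $P$ is $\Q[t,t^{-1}]$-closed. Once this is pinned down, the remainder is elementary matrix manipulation depending only on the vanishing of the top-left block of $V$.
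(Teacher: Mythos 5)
Your proof is correct, and it takes a genuinely different route from the paper's. The paper's argument for item 2 is geometric: each $\beta_i$ bounds a disk meeting $K$ twice, hence after tubing a punctured torus with symplectic basis a meridian $\mu$ and $a_i$, which exhibits the identity $[\beta_i]=(t-1)[a_i]$ in $\mathcal{A}_0(K)$; since $P$ is a submodule containing the classes $[a_i]$ (item 1), the $\phi(\beta_i)$ lie in (indeed span) $P$, and item 2 then follows from the well-known surjectivity of $H_1(S^3-\Sigma;\Q)\to \mathcal{A}_0(K)$. You extract the same identity algebraically from the Mayer--Vietoris presentation and the block form of the Seifert matrix: your relation $\bigl(p(t)A+q(t)I\bigr)y=0$ has $q(t)$ a unit ($\pm t$ or $\pm 1$ depending on conventions), so it rearranges to $y=\text{unit}\cdot(t-1)Ay$, and combined with $i_*(a_k)=(Ay)_k$ (up to a unit of $\Q[t,t^{-1}]$) this is literally the paper's $[\beta_i]=(t-1)[a_i]$; so the two arguments are equivalent in content. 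The geometric argument buys brevity and avoids the orientation/convention bookkeeping you rightly flag (sign of $V-V^T$, choice of $i_\pm$, which translate of the fundamental domain carries the lift of $a_k$), while yours is more computational and makes the role of the vanishing top-left block of $V$ explicit. One small caveat: your last sentence concludes only that the $\phi(\alpha_i)$ generate $\mathcal{A}_0(K)/P$ as a $\Q[t,t^{-1}]$-module, whereas the paper's statement (and its later use, e.g.\ in giving the canonical epimorphism $f$ an image of rank $d$) intends spanning as a $\Q$-vector space; this is immediately repaired by invoking, as the paper does, the standard fact that $H_1(S^3-\Sigma;\Q)\to\mathcal{A}_0(K)$ is onto as a map of $\Q$-vector spaces, since your containment $\phi(\beta_j)\in P$ is the real content either way.
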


\begin{proof} Property $1$ is immediate from Definition~\ref{def:represents}. It is well known that
$$
H_1(S^3-\Sigma;\Z)\otimes \Q\overset{i_*}\twoheadrightarrow \mathcal{A}_0(K),
$$
is surjective. Hence $\{\alpha_{1},..,\alpha_{g},\beta_1,..,\beta_g\}$  spans $\mathcal{A}_0(K)$ under $\phi$. Furthermore, one can easily see from the example shown in Figure \ref{fig:tubes} that each $\beta_{i}$ bounds a disk that hits $K$ twice. Then, by tubing, one sees that $\beta_{i}$ bounds a punctured torus.

\begin{figure}[htbp]
\setlength{\unitlength}{1pt}
\begin{picture}(165,170)
\put(-20,0){\includegraphics{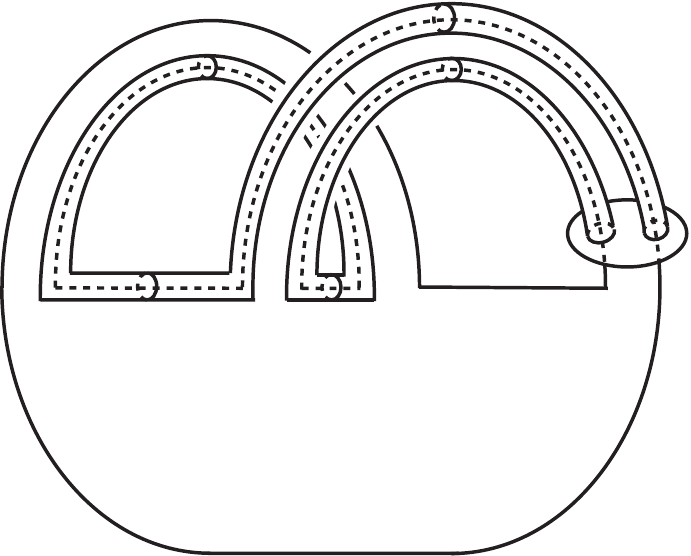}}
\put(22,122){$a_i$}
\put(108,128){$\mu$}
\put(185,90){$\beta_i$}
\end{picture}
\caption{$\beta_{i}$ bounds a punctured torus}\label{fig:tubes}
\end{figure}

This torus admits a symplectic basis wherein one curve is a meridian, $\mu$, of $K$ and the other is $a_{i}$. This illustrates the fact that $[\beta_{i}]=(t-1)[a_{i}]$ in $\mathcal{A}_0(K)$. Thus the sets $\{a_{1},...,a_{g}\}$ and $\{\phi(\beta_{1}),...,\phi(\beta_{g})\}$ generate the same subspace of $\mathcal{A}_0(K)$, namely $P$. It follows that $\{\phi(\alpha_{1}),...,\phi(\alpha_{g})\}$ spans $\mathcal{A}_0(K)/P$.
\end{proof}

If $P\subset \mathcal{A}_{0}(K)$ is a Lagrangian and $\mathfrak{m}$ is a metabolizer representing $P$, then we can use Proposition \ref{prop:lagrangprops} to take the viewpoint that the derivative $\frac{\partial K}{\partial\mathfrak{m}}$ comes equipped with a \textbf{canonical epimorphism} $f:\pi_1(M_J)\to A$, where $A$ is a free abelian quotient of $H_1(M_J)$, defined as follows. To a meridian $\mu_i$ of $J_i$ we associate the meridian, $\alpha_i$, of the band on which $J_i$ lies (see Figure \ref{fig:funkytwisty}) and set $f(\mu_i)=\phi(\alpha_i)$.

Note that $A$, the image of $\phi$, is a free abelian group. We often think of $f$ as a map to $\Z^d$ ($d=\frac{1}{2}\text{deg}\Delta_K(t)$) but distinguish $f$ only up to post-composition with an isomorphism. We say that $\frac{\partial K}{\partial \mathfrak{m} }=(J,f)$. Note that $f$ is trivial if and only if $\Delta_K(t)=1$. If the genus of $\Sigma$ is one then $J$ is a knot and in this case the map $f$ will merely be the abelianization $\pi_1(M_J)\to \Z$ unless $\Delta_K(t)=1$ in which case it will be the zero homomorphism. Note that \emph{any} knot or link $J$ can arise as a derivative of a slice Alexander polynomial $1$ knot (see Section~\ref{sec:antiderivatives}). In these cases, however, the map $f$ is the zero map. Thus it is necessary to include the map $f$ in the data, especially for higher genus examples.


Recall Gilmer's philosophy that the Casson-Gordon invariants of a knot $K$ are related to the classical signatures and nullities of knots on the Seifert surface for $K$ (~\cite[Corollaries 0.2, 0.3]{Gi3} although Gilmer informs us that those Corollaries are invalid due to a gap in the paper). The following results, proved in Section~\ref{sec:nullbordisms}, are the analogous results for first-order $L^{(2)}$-signatures.

\begin{prop}\label{prop:firstrho=rhozeroofder} Suppose that $P$ is a Lagrangian for $K$ and $(J,f)=\frac{\partial K}{\partial \mathfrak{m} }$ is a $c$-component link where $\mathfrak{m}$ represents $P$. If the first-order signature of $K$ corresponding to $P$ is denoted $\rho(M_K,\phi_P)$ then
$$
|\rho(M_K,\phi_P)-\rho^f_0(J)|\leq c-1-\eta(J,f).
$$
\end{prop}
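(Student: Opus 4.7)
The plan is to construct a 4-manifold cobordism $W$ from $M_K$ to $M_J$, extend $\phi_P$ to a PTFA representation $\Psi$ on $\pi_1(W)$ whose restriction to $\pi_1(M_J)$ factors through $f$, and then apply the Atiyah-Patodi-Singer-type signature-defect formula together with an analysis of the ordinary and equivariant intersection forms on $H_2(W)$.

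\medskip
\noindent \textbf{Construction of $W$.} Take $W_1 = M_K \times [0,1]$ and attach $c$ two-handles along $J_i \times \{1\}$ using the $0$-framings induced by $\Sigma$. The upper boundary $N$ is simultaneously the zero-surgery on $S^3$ along $K \cup J$ and, since $\mathrm{lk}(K, J_i) = 0$ for all $i$, the zero-surgery on $M_J$ along $K$. Thus $N = \partial_+ W_2$ where $W_2 = M_J \times [0,1]$ with one two-handle along $K \times \{1\}$. Setting $W = W_1 \cup_N \overline{W_2}$ gives $\partial W = -M_K \sqcup M_J$, and $W$ is built from $M_K \times I$ and $M_J \times I$ by $c+1$ two-handles, all attached along null-homologous curves whose self-framings and pairwise linking numbers vanish.

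\medskip
\noindent \textbf{Extending the representation.} Let $\Gamma = G/\tilde P$ with $G = \pi_1(M_K)$. Since $\mathfrak{m}$ represents $P$, each $[J_i] \in P$, hence $J_i \in \tilde P$ and $\phi_P(J_i) = 1$; thus $\phi_P$ extends over the $W_1$ handles to $\Psi_1 : \pi_1(W_1) \to \Gamma$. The restriction $\Psi_1|_{\pi_1(M_J)}$ sends $\mu_{J_i}$ to $[\alpha_i] \in \Gamma^{(1)} \subseteq \mathcal{A}_0(K)/P$, which by Proposition~\ref{prop:lagrangprops} agrees with $f(\mu_{J_i})$ under the inclusion $A \hookrightarrow \mathcal{A}_0(K)/P$. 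Hence $\Psi|_{M_J}$ factors through $f$, and property~(1) of the $\rho$-invariant gives $\rho(M_J, \Psi|_{M_J}) = \rho(M_J, f) = \rho_0^f(J)$. Moreover $f(K) = \sum_i \mathrm{lk}(K, J_i) f(\mu_{J_i}) = 0$, so the representation extends across the $W_2$ handle to $\Psi : \pi_1(W) \to \Gamma$. The signature-defect formula then reads
$$\rho(M_K, \phi_P) - \rho_0^f(J) = \sigma(W) - \sigma_\Gamma^{(2)}(W, \Psi).$$

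\medskip
\noindent \textbf{Bounding the defect.} A Mayer-Vietoris computation yields $H_1(W;\mathbb{Q})=0$ and $H_2(W;\mathbb{Q}) \cong \mathbb{Q}^{c+1}$, spanned by the cores $e_1, \ldots, e_c, e_K$ of the $c+1$ two-handles. As all self-framings and pairwise linking numbers vanish, the ordinary intersection form is identically zero and $\sigma(W) = 0$. For the $\mathcal{K}\Gamma$-equivariant form on $H_2(W; \mathcal{K}\Gamma)$, these same $c+1$ classes span a sub-Lagrangian $L$ of $\mathcal{K}\Gamma$-dimension $c+1$, because the attaching curves are $\Psi$-trivial so the handles lift equivariantly and the $0$-framings persist. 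A parallel Mayer-Vietoris analysis with $\mathcal{K}\Gamma$-coefficients, using that $\Psi|_{M_J}$ factors through $A$ (whence $H_*(M_J; \mathbb{Z}\Gamma) \cong H_*(M_J; \mathbb{Z}[A]) \otimes_{\mathbb{Z}[A]} \mathbb{Z}\Gamma$), bounds the $\mathcal{K}\Gamma$-dimension of $H_2(W; \mathcal{K}\Gamma)$ by $3c + 1 - \eta(J, f)$, with the $-\eta(J,f)$ correction coming from the $\mathcal{R}$-rank of $H_1(M_J;\mathcal{R})$ via Poincar\'e-Lefschetz duality. Therefore
$$|\sigma_\Gamma^{(2)}(W, \Psi)| \leq \dim_{\mathcal{K}\Gamma} H_2(W; \mathcal{K}\Gamma) - 2\dim_{\mathcal{K}\Gamma} L \leq c - 1 - \eta(J, f).$$

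\medskip
\noindent \textbf{Main obstacle.} The principal technicality is the last step: the precise $\mathcal{K}\Gamma$-Betti-number count. One must carefully track how the Alexander nullity $\eta(J,f) = \mathrm{rk}_\mathcal{R} H_1(M_J; \mathcal{R})$ reduces the contribution of $H_*(M_J; \mathcal{K}\Gamma)$ in the Mayer-Vietoris sequence, leveraging the factorization of $\Psi|_{M_J}$ through the abelian group $A$ and the module-structure of the equivariant chain complex of $W$ to produce the required cancellations.
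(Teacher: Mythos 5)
There is a genuine gap, and it is located exactly where your construction departs from the paper's. Your cobordism $W=W_1\cup_N\overline{W_2}$ caps off the top with the upside-down trace of the surgery on $K\subset M_J$; relative to $N$ this is a $2$-handle attached along the belt circle of $h^2_K$, i.e.\ along a meridian $\mu_K$ of $K$. That meridian therefore bounds the cocore disk in $\overline{W_2}$, so it is null-homotopic in $W$ (this is also visible in your own computation $H_1(W;\Q)=0$). But $\phi_P$ sends $\mu_K$ to a generator of the infinite cyclic quotient $\Gamma/\Gamma^{(1)}$ of $\Gamma=G/\tilde P$, an element of infinite order. Hence no homomorphism $\Psi:\pi_1(W)\to\Gamma$ restricting to $\phi_P$ on $\pi_1(M_K)$ can exist: your verification that $f([K])=0$ only shows that a representation of $\pi_1(M_J)$ factoring through $f$ extends over $W_2$ from the top, not that it is compatible over $\pi_1(N)$ with $\Psi_1$ coming from the bottom, and on $\mu_K$ the two sides are forced to disagree ($1$ versus an infinite-order element). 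Since the whole argument hinges on the identity $\rho(M_K,\phi_P)-\rho^f_0(J)=\sigma(W)-\sigma^{(2)}_\Gamma(W,\Psi)$ for this nonexistent $\Psi$, the proof collapses at this point.

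The paper avoids this precisely by not gluing in $\overline{W_2}$: after attaching the $c$ zero-framed $2$-handles along $J$ it attaches a \emph{$3$-handle} along the canonical $2$-sphere $S$ obtained from the surgered Seifert surface together with a disk bounding the longitude of $K$ (Proposition~\ref{prop:Efacts}). A $3$-handle imposes no condition on $\pi_1$, so $\pi_1(E)\cong\pi_1(M_K)/\langle\langle J_1,\dots,J_c\rangle\rangle$, $H_1(M_K)\to H_1(E)$ is an isomorphism, and $\phi_P$ extends over $E$ because $[J_i]\in P$ forces $J_i\in\tilde P$; one then checks (as in the proof of Proposition~\ref{prop:derivisnullbord}, using Proposition~\ref{prop:lagrangprops}) that the restriction to $\pi_1(M_J)$ is $f$ followed by an embedding. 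Finally, the quantitative estimate $|\rho(M_K,\phi_P)-\rho^f_0(J)|\le c-1-\eta(J,f)$ is not obtained by an ad hoc Mayer--Vietoris count of $\dim_{\mathcal{K}\Gamma}H_2$ and a sub-Lagrangian as you propose (your bound ``$3c+1-\eta(J,f)$'' is asserted without proof and is the hard part); it is the content of the previously established Theorem~\ref{thm:CHLsigvanishes}, applied to $(E,\phi)$ together with the flatness argument identifying $\operatorname{rank}_{\Z\G}H_1(M_J;\Z\G)$ with $\eta(J,f)$ and the vanishing of $\operatorname{rank}_{\Z\G}H_1(M_K;\Z\G)$. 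If you want to salvage your route, you would need both to replace $\overline{W_2}$ by the $3$-handle construction and to supply a genuine proof of the $L^{(2)}$-Betti number estimate you are implicitly re-deriving.
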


\begin{cor}\label{cor:firstrho=rhozeroofder} Suppose that $P$ is a Lagrangian for $K$ and $(J,f)=\frac{\partial K}{\partial \mathfrak{m} }$ is a link of maximal Alexander nullity (for example a knot). Then the first-order signature of $K$ corresponding to $P$ is equal to $\rho^f_0(\frac{\partial K}{\partial \mathfrak{m} })$, that is, the zero-th order signature of the derivative with respect to $\mathfrak{m}$.
\end{cor}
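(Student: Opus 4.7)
The plan is to deduce this corollary as a direct specialization of Proposition~\ref{prop:firstrho=rhozeroofder}; the genuine content has already been absorbed into that proposition, so no further geometric input is needed here. Recall from Section~\ref{sec:notation} that $(J,f)$ is said to have maximal Alexander nullity exactly when $f$ is non-trivial and $\eta(J,f) = c - 1$, where $c$ is the number of components of $J$. Substituting this equality into the bound
$$
|\rho(M_K,\phi_P) - \rho^f_0(J)| \leq c - 1 - \eta(J,f)
$$
supplied by Proposition~\ref{prop:firstrho=rhozeroofder} yields $|\rho(M_K,\phi_P) - \rho^f_0(J)| \leq 0$, which forces the equality $\rho(M_K,\phi_P) = \rho^f_0(J)$.

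The parenthetical ``for example a knot'' is then immediate: if $J$ is a knot then $c = 1$, and since the classical rational Alexander module is torsion, any non-trivial $f$ satisfies $\eta(J,f) = 0 = c - 1$, so knots with non-trivial $f$ fall under the maximal-nullity hypothesis above.

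The only situation that lies outside the scope of the maximal-nullity hypothesis is the degenerate one in which $\Delta_K(t) = 1$, for then $\mathcal{A}_0(K) = 0$, the only Lagrangian is $P = 0$, and the canonical epimorphism $f$ of the associated derivative is the zero map. In this case, however, both sides vanish on general grounds: $\rho(M_K,\phi_P)$ reduces to $\rho_0(K)$, which is zero because $K$ is algebraically slice with trivial Alexander polynomial (so all of its Levine-Tristram signatures vanish), while $\rho^f_0(J) = 0$ by property~(2) of the von Neumann $\rho$-invariants recalled in Section~\ref{sec:notation}. The equality therefore holds trivially here as well, and no real obstacle remains beyond Proposition~\ref{prop:firstrho=rhozeroofder} itself.
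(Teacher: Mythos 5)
Your proposal is correct and takes essentially the same route as the paper, which states Corollary~\ref{cor:firstrho=rhozeroofder} as an immediate specialization of Proposition~\ref{prop:firstrho=rhozeroofder}: maximal Alexander nullity means $\eta(J,f)=c-1$ with $f$ non-trivial, so the bound forces equality. Your extra discussion of the $\Delta_K(t)=1$ case is harmless but not needed, since the definition of maximal Alexander nullity already requires $f$ to be non-trivial and thus excludes that degenerate situation from the hypothesis.
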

Combining Proposition~\ref{prop:firstrho=rhozeroofder} with Theorem~\ref{thm:firstordersigs=0} we get the following generalization of Cooper's Theorem.
\begin{cor}\label{cor:sliceimpliesrhobound} If $K$ is a slice knot, $P$ is a Lagrangian corresponding to a slice disk and $(J,f)=\frac{\partial K}{\partial \mathfrak{m} }$ is a $c$-component link where $\mathfrak{m}$ represents $P$, then
$$
|\rho^f_0(J)|\leq c-1-\eta(J,f).
$$
\end{cor}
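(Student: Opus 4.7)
The plan is to combine the two ingredients cited just above the statement: Proposition~\ref{prop:firstrho=rhozeroofder}, which compares the first-order signature of $K$ to the zero-th order signature of the derivative, and Theorem~\ref{thm:firstordersigs=0}, which says the first-order signature corresponding to a slice-disk Lagrangian vanishes. Given these, the proof is a one-line triangle-inequality collapse, so the work lies in verifying the hypotheses are matched correctly.

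First I would observe that by hypothesis $P$ is a Lagrangian for $\mathcal{B}\ell_0^K$ that corresponds to a slice disk $\Delta$ in the sense of equation \eqref{eq:defP}. Theorem~\ref{thm:firstordersigs=0} then applies verbatim and yields $\rho(M_K,\phi_P)=0$, where $\phi_P\colon \pi_1(M_K)\to \pi_1(M_K)/\widetilde{P}$ is the metabelian quotient associated with $P$ in Section~\ref{sec:firstorder sigs}.

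Next I would invoke Proposition~\ref{prop:firstrho=rhozeroofder} with the same Lagrangian $P$ and the same representing metabolizer $\mathfrak{m}$, noting that the derivative $(J,f)=\frac{\partial K}{\partial\mathfrak{m}}$ in the statement of the corollary is literally the one produced by the canonical epimorphism construction immediately following Proposition~\ref{prop:lagrangprops}. This gives
\[
\bigl|\rho(M_K,\phi_P)-\rho^f_0(J)\bigr|\le c-1-\eta(J,f).
\]
Substituting $\rho(M_K,\phi_P)=0$ from the first step immediately yields $|\rho^f_0(J)|\le c-1-\eta(J,f)$, which is the desired inequality.

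The only real obstacle is Proposition~\ref{prop:firstrho=rhozeroofder} itself, whose proof is deferred to Section~\ref{sec:nullbordisms}; since the excerpt allows me to assume earlier results, I would not reprove it here. I would, however, point out one subtle compatibility one must verify when quoting it: the epimorphism $f\colon\pi_1(M_J)\to A$ appearing in the statement of the corollary must be the canonical one induced from $P$ (meridians of the derivative components mapping to the classes $\phi(\alpha_i)$ of Proposition~\ref{prop:lagrangprops}), because that is the $f$ for which Proposition~\ref{prop:firstrho=rhozeroofder} relates $\rho^f_0(J)$ to $\rho(M_K,\phi_P)$. With that identification understood, no further argument is needed.
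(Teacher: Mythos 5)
Your proposal is correct and is exactly the paper's argument: the paper derives this corollary by combining Proposition~\ref{prop:firstrho=rhozeroofder} with Theorem~\ref{thm:firstordersigs=0}, just as you do, with the vanishing of $\rho(M_K,\phi_P)$ for the slice-disk Lagrangian collapsing the inequality to the stated bound. Your remark about using the canonical epimorphism $f$ attached to $\frac{\partial K}{\partial\mathfrak{m}}$ is the right compatibility check and is implicit in the paper's setup.
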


\begin{ex}\label{ex:twistknots} Let $K$ be a twist knot, the boundary of the genus one surface shown in Figure~\ref{fig:twistknot}. It is easy to see that $K$ is algebraically slice precisely when $4t+1=m^2$ for some positive integer $m$. Casson-Gordon showed that such knots are in fact not slice unless $t=0$ or $2$ ~\cite{CG1}. We can reprove this result from the present point of view. This point of view was well-known to experts. It is an excellent illustration of what we are trying to generalize. Assuming $4t+1=m^2$, the genus one surface shown admits two metabolizers generated by $\{(1,\frac{1\pm m}{2})\}$ respectively. Consider the case of $(1,\frac{1+m}{2})$. Then $J=\frac{\partial K}{\partial \mathfrak{m} }$ has the form of the knot shown in Figure~\ref{fig:twistknot} and $J$ has the knot type of the $(n,1-n)$-torus knot  where $n=\frac{1+m}{2}$. Note that unless $n=1$ or $2$ (i.e. $t=0$ or $2$), $\frac{\partial K}{\partial \mathfrak{m} }$ is a non-trivial torus knot and hence has non-trivial zero-th order signature. The case of $(1,\frac{1-m}{2})$ is similar. It follows from Theorem~\ref{thm:CooperThm} that $K$ is not a slice knot except in these two cases. Alternatively, it follows from Corollary~\ref{cor:firstrho=rhozeroofder} that the first-order signatures of $K$ corresponding to Lagrangians are non-zero, and then follows from Theorem~\ref{thm:firstordersigs=0} that $K$ is not slice.
\begin{figure}[htbp]
\setlength{\unitlength}{1pt}
\begin{picture}(327,151)
\put(80,0){\includegraphics{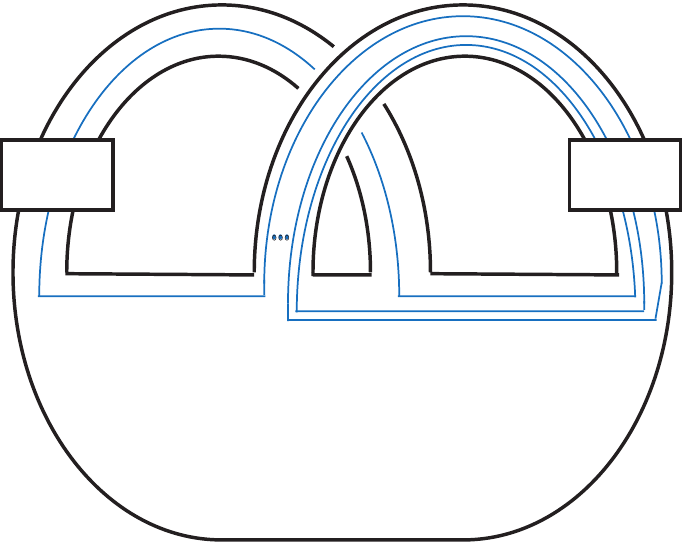}}
\put(252,103){$-1$}
\put(92,102){$t$}
\put(224,48){$\frac{\partial K}{\partial \mathfrak{m} }$}
\put(50,90){$K\equiv$}
\end{picture}
\caption{A derivative of a twist knot}\label{fig:twistknot}
\end{figure}

\end{ex}

\begin{ex}\label{ex:changerho}This example shows that Proposition~\ref{prop:firstrho=rhozeroofder} is close to the best possible result. It also shows that, even for a fixed Lagrangian $P$, derivatives with respect to metabolizers representing $P$ can vary greatly. Suppose $W$ is a $2$-component string link whose second component is unknotted and which has linking number zero. Then the knot $K$ shown in Figure~\ref{fig:stupidhandles} (for fixed $L$, $\ell$ and $t$) is isotopic, independent of  $W$, to the genus one knot shown on the left-hand side of Figure~\ref{fig:arbgenus1AS}.
\begin{figure}[htbp]
\setlength{\unitlength}{1pt}
\begin{picture}(205,170)
\put(-70,0){\includegraphics{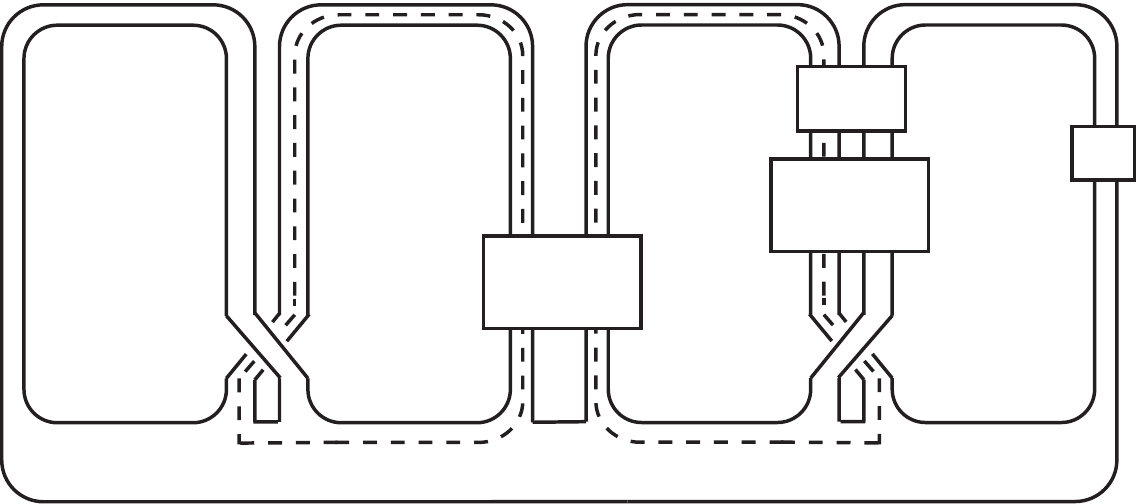}}
\put(22,5){$J_2$}
\put(118,5){$J_1$}
\put(84,61){$W$}
\put(170,83){$L$}
\put(173,113){$\ell$}
\put(247,98){$t$}
\end{picture}
\caption{Trade-off between nullity and signature}\label{fig:stupidhandles}
\end{figure}
The link $J_W=\{J_1,J_2\}$ is the basis of a metabolizer $\mathfrak{m}_w$ on the obvious genus two surface $\Sigma$ shown in Figure~\ref{fig:stupidhandles}. Thus $J_W=\frac{\partial K}{\partial \mathfrak{m}_w }$. Each $\mathfrak{m}_w$ represents the same Lagrangian $P$ (independent of the choice of $W$) since $J_2$ is trivial in the Alexander module (it bounds a surface in the exterior of $\Sigma$). In each case the associated epimorphism $f:\pi_1(J_W)\to\Z$ sends the meridian of $J_1$ to $1$ and the meridian of $J_2$ to zero. Note that the first-order signature of $K$ corresponding to $P$ is a fixed well-defined real number. Moreover, by using the genus one Seifert surface, and applying Corollary~\ref{cor:firstrho=rhozeroofder} we see that this number is equal to $\rho_0(L_1)$. However, one can check that different choices of $W$ lead to different values of the Alexander nullity of the derivative and to different values of the zero-th order signature of the derivative. Specifically if $W$ is the trivial link then $\eta(J_W,f)=1$ while $\rho^f_0(J_W)=\rho_0(L_1)$, whereas if $W$ is a Whitehead link then $\eta(J_W,f)=0$ and $\rho^f_0(J_W)=\rho_0(L_1)+1$. Therefore one sees that there is a trade-off between the nullity and the signature and that both must be included to properly estimate the first-order signature of $K$ corresponding to $P$.
\end{ex}

\textbf{Open Problem}: If $\Sigma$ is a Seifert surface for a (smoothly) slice knot does there exist a choice $\frac{\partial K}{\partial \mathfrak{m} }$ that is a link of maximal Alexander nullity? a trivial link?

\begin{ex}\label{ex:derivsofhighergenus} Consider the family of knots, K, shown in Figure~\ref{fig:derivsgenustwo} with the obvious genus two Seifert surface. The integers $\ell_i$ indicate full-twists between the bands. $\mathbb{B}$ symbolizes a $2$-component string link whose components are parallel copies of the knotted arc $B$ so that the bands are tied into parallel copies of the knot $B$. Here $L=\{L_1,L_2\}$ and $\mathbb{L}=\{\mathbb{L}_1,\mathbb{L}_2\}$ are (string) links of linking number zero. We make the following restrictions. Assume that $\ell_1>\ell_2\geq 1$. Assume that $L_1$  and $\mathbb{L}_2$ are algebraically slice knots and assume that $\rho_0(L_2)>0$, $\rho_0(\mathbb{L}_1)>0$ and $\rho_0(B)\geq 0$. We shall compute representatives of all of the derivatives of $K$. Each will be a $2$-component link with maximal Alexander nullity (indeed a boundary link). Only one Lagrangian will have a non-zero associated first-order signature.
\begin{figure}[htbp]
\setlength{\unitlength}{1pt}
\begin{picture}(327,151)
\put(0,0){\includegraphics{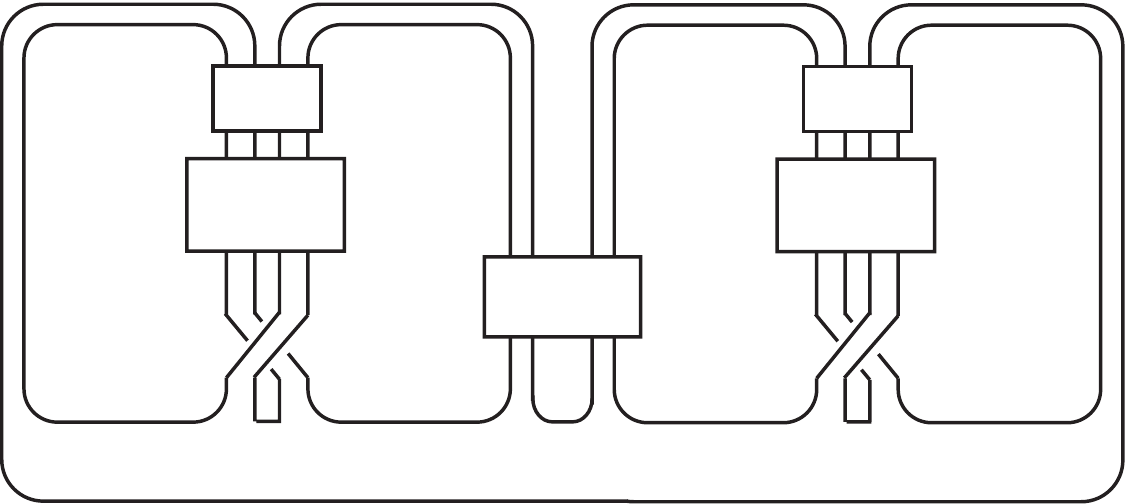}}
\put(243,83){$\mathbb{L}$}
\put(243,113){$\ell_2$}
\put(74,112){$\ell_1$}
\put(158,56){$\mathbb{B}$}
\put(71,83){$L$}
\end{picture}
\caption{A family of genus two knots, $K$}\label{fig:derivsgenustwo}
\end{figure}

None of $\mathbb{B}$, $L$ or $\mathbb{L}$ affects the Seifert matrix for $K$ so $K$ has the Alexander module and Blanchfield form of a connected sum of genus one knots ($K^{\ell_1}, K^{\ell_2}$) of the type shown on the left-hand side of Figure~\ref{fig:arbgenus1AS} (with $t=0$). Moreover since $\ell_1\neq \ell_2$ the Alexander polynomials of these genus one knots are co-prime. Thus
$$
\mathcal{A}_0(K)\cong \frac{\Q[t,t^{-1}]}{\langle p(t)p(t^{-1})\rangle}\oplus \frac{\Q[t,t^{-1}]}{\langle q(t)q(t^{-1})\rangle}
$$
where $p(t)$ and $q(t)$ are distinct primes. It follows that any Lagrangian of $\mathcal{A}_0(K)$ is a direct sum $P_i\oplus Q_j$ of Lagrangians for $K^{\ell_1}$ and $K^{\ell_2}$ respectively. Therefore $K$ has precisely four Lagrangians of the form $P_i\oplus Q_j$ obtained from $P_1=\langle (p(t),0)\rangle$, $P_2=\langle (p(t^{-1}),0)\rangle,$ $Q_1=\langle (0,q(t))\rangle$ and $Q_2=\langle (0,q(t^{-1}))\rangle$. The rank one subspaces $P_1,P_2,Q_1,Q_2$ have representative curves on the genus two surface in Figure~\ref{fig:derivsgenustwo} that traverse the first, second, third and fourth bands respectively. Let $\mathfrak{m}_{ij}, 1\leq i\leq 2, 1\leq j\leq 2$ be the rank $2$ metabolizers that represent $P_i\oplus Q_j$ obtained by using the appropriate choices of the band curves. Then the four derivatives $J_{ij}=\frac{\partial K}{\partial \mathfrak{m}_{ij} }$ are as shown in Figure~\ref{fig:4derivs}.
\begin{figure}[htbp]
\setlength{\unitlength}{1pt}
\begin{picture}(200,230)
\put(-53,120){\includegraphics{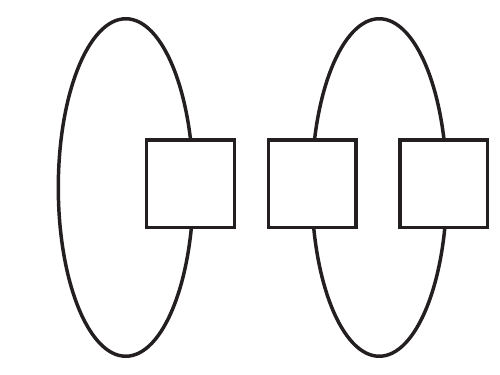}}
\put(-50,0){\includegraphics{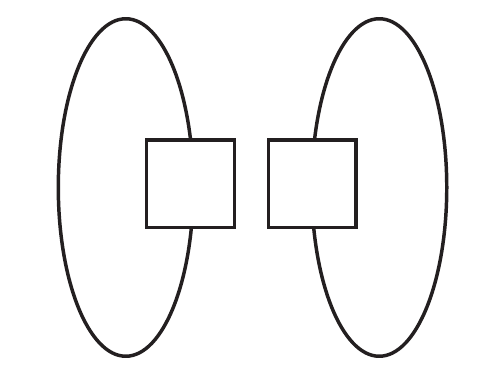}}
\put(100,0){\includegraphics{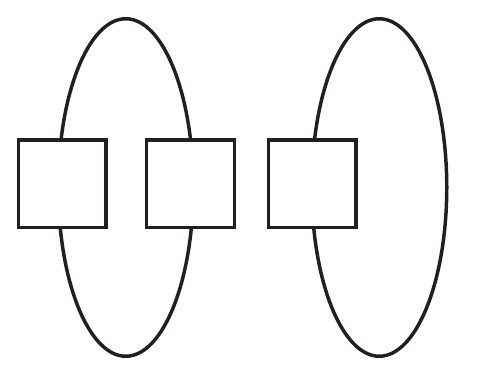}}
\put(92,120){\includegraphics{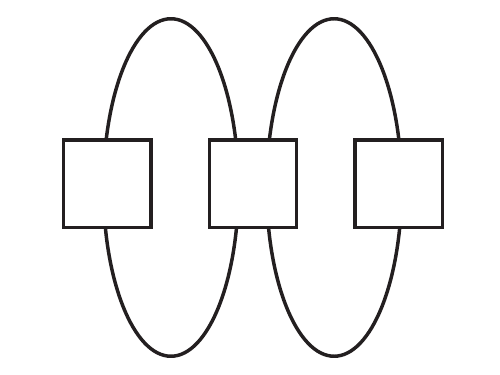}}
\put(-76,173){$J_{11}=$}
\put(-3,173){$L_1$}
\put(31,173){$B$}
\put(70,173){$\mathbb{L}_1$}

\put(242,173){$=J_{21}$}
\put(117,173){$L_2$}
\put(160,173){$\mathbb{B}$}
\put(202,173){$\mathbb{L}_1$}

\put(-76,53){$J_{12}=$}
\put(-1,53){$L_1$}
\put(35,53){$\mathbb{L}_2$}

\put(242,53){$=J_{22}$}
\put(112,53){$L_2$}
\put(150,53){$B$}
\put(186,53){$\mathbb{L}_2$}

\end{picture}
\caption{Derivatives $\frac{\partial K}{\partial \mathfrak{m}_{ij} }$}\label{fig:4derivs}
\end{figure}
Note that each is a boundary link and hence each has maximal Alexander nullity. Hence by Corollary~\ref{cor:firstrho=rhozeroofder} the first-order signatures of $K$ corresponding to these Lagrangians are $\{\rho^f_0(J_{ij})\}$, where $f$, each case, is the abelianization map. Each link $J_{ij}$ is obtained from the trivial link of $2$-components by one or more infections using knots. Thus by Lemma~\ref{lem:additivity} we see that
$$
\rho_0(J_{11})=\rho_0(L_1)+\rho_0(B)+\rho_0(\mathbb{L}_1)>0,
$$
$$
\rho_0(J_{12})=\rho_0(L_1)+\rho_0(\mathbb{L}_2)=0,
$$
$$
\rho_0(J_{21})=\rho_0(L_2)+\rho_0(B)+\rho_0(\mathbb{L}_1)>0,~~\text{and}
$$
$$
\rho_0(J_{22})=\rho_0(L_2)+\rho_0(B)+\rho_0(\mathbb{L}_2)>0.
$$
Since $L_1$ and $\mathbb{L}_2$ are algebraically slice, their zero-th order signatures vanish. The other $\rho_0$ are, by hypothesis, positive or possibly zero in the case of $B$. Hence each of the first-order signatures of $K$ corresponding to Lagrangians is positive, except the one corresponding to $J_{12}$, which is zero. Therefore,  first-order signatures cannot distinguish $K$ from a slice knot. Indeed if $L_1$ and $\mathbb{L}_2$ were chosen to be slice knots then $K$ would be a slice knot. In fact, as we shall show in Example~\ref{ex:mainexampleshighergenus}, no matter what choice is made for $L_1$ and $\mathbb{L}_2$ no metabelian invariants can distinguish $K$ from a slice knot.
\end{ex}

We close this section with the proof of Lemma \ref{lem:lagrangianisrep}.

\begin{proof}[Proof of Lemma \ref{lem:lagrangianisrep}] Suppose $P$ is a Lagrangian of rank $d$ over $\Q$ so the rank of $\mathcal{A}_0(K)$ is $2d$. Let $\Sigma$ be a genus $g$ Seifert surface for $K$. Then necessarily $d\leq g$. If we fix an identification of $S^3-\Sigma$ with a fundamental domain of the infinite cyclic cover of $S^3-K$ then we have maps
$$
\phi: H_1(S^3-\Sigma;\Z)\hookrightarrow H_1(S^3-\Sigma;\Z)\otimes \Q\overset{i_*}\twoheadrightarrow \mathcal{A}_0(K).
$$
Then $(i_*)^{-1}(P)$ is a vector space of dimension $r=2g-d\geq g$ that splits as $\text{ker}(i_*)\oplus V$ where $\text{ker}(i_*)$ has dimension $2g-2d$ and $V$ has dimension $d$. Choose a $\Z$-basis $\{\gamma_1,...\gamma_r,...,\gamma_{2g}\}$ of $H_1(S^3-\Sigma;\Z)$ such that  $\{\gamma_{g+1}\otimes 1,...,\gamma_{g+d}\otimes 1\}$ is a basis of $V$ and
$$
\{\gamma_{d+1}\otimes 1,...,\gamma_{g}\otimes 1\}\cup\{\gamma_{g+d+1}\otimes 1,...,\gamma_{2g}\otimes 1\}
$$
is a basis of $\text{ker}(i_*)$. Thus
\begin{equation}\label{eq:alpha0}
\{\phi(\gamma_{1}),...,\phi(\gamma_{d})\} ~\text{is a basis for} ~~\mathcal{A}_0(K)/P
\end{equation}
and
\begin{equation}\label{eq:alpha1}
\{\phi(\gamma_{g+1}),...,\phi(\gamma_{g+d})\} ~\text{is a basis for} ~~P
\end{equation}
while
\begin{equation}\label{eq:alpha2}
 0=\phi(\gamma_{d+1})=...=\phi(\gamma_{g})
\end{equation}
and
\begin{equation}\label{eq:alpha3}
0=\phi(\gamma_{g+d+1})=...=\phi(\gamma_{2g}).
\end{equation}
Since $P\subset P^\perp$, $\mathcal{B}\ell_0(\phi(\gamma_{g+i}),\phi(\gamma_{g+j}))=0$ for all $1\leq i,j\leq g$. Following ~\cite[proof of Theorem 2]{Ke} and ~\cite[page 122-123]{Hi}, this implies that the Seifert form vanishes on $\{a_1,...,a_g\}$ where $\{a_i|~1\leq i\leq 2g\}$ is the basis of $H_1(\Sigma)$ that is dual to $\{\gamma_i\}$ under ``linking in $S^3$''. Let $\mathfrak{m}$ be the $\Z$-span of $\{a_1,...,a_g\}$, clearly a metabolizer.
Since the Seifert form vanishes on $\{a_1,...,a_g\}$, the intersection form on $H_1(\Sigma)$ also vanishes on $\{a_1,...,a_g\}$. That is, $\mathfrak{m}$ is a maximal isotropic subgroup of $H_1(\Sigma)$ with respect to the intersection form so it can be extended to a symplectic basis $\mathbb{A}=\{a_1,...,a_g,b_{1},\dots,b_{g}\}$ whose first $g$ elements are a basis for $\mathfrak{m}$. Let $\mathcal{A}=\{\alpha_{1},..,\alpha_{g},\beta_1,..,\beta_g\}$ denote the linking-dual basis of $H_1(S^3-\Sigma;\Z)$. We claim:
\begin{itemize}
\item  [1.] $\{\phi(\alpha_{1}),...,\phi(\alpha_{d})\}$ and $\{b_{1},...,b_{d}\}$ are bases of $\mathcal{A}_0(K)/P$.
\item [2.] $\{a_1,...,a_g\}$ and $\{\phi(\beta_{1}),...,\phi(\beta_{g})\}$ span $P$ in the rational vector space $\mathcal{A}_0(K)$.
\end{itemize}
To establish the first claim, let $N=(n_{ij})$ be the matrix that transforms (by left multiplication) a column vector in the $\mathcal{A}$ coordinates into a vector in the $\gamma$ coordinates, so that for $j\leq g$
$$
\alpha_j=\sum_{i=1}^{2g}n_{ij}\gamma_{i},
$$
and for $j> g$
$$
\beta_{j-g}=\sum_{i=1}^{2g}n_{ij}\gamma_{i}.
$$
For any fixed $k\leq g$ take the linking number of each side of these equations with $a_k$. Since $\mathcal{A}$ is dual to $\mathbb{A}$, and $\{\gamma_i\}$ is dual to $\{a_i\}$ this yields
$$
\delta_{kj}=n_{kj}.
$$
Thus $N$ is given by a block matrix
$$
\left(\begin{matrix} I & 0\cr B & C
\end{matrix}\right),
$$
for some invertible $g\times g$ matrix $C$. Therefore for $j\leq g$
\begin{equation}\label{eq:alpha'1}
\alpha_j=\gamma_j+\sum_{i=g+1}^{2g}n_{ij}\gamma_{i};
\end{equation}
while for $j>g$
\begin{equation}\label{eq:alpha'2}
\beta_{j-g}=0+\sum_{i=g+1}^{2g}n_{ij}\gamma_{i}.
\end{equation}
In particular, we see that the expressions for $\{\alpha_{d+1},...\alpha_g\}$ and for $\{\beta_1,..,\beta_g\}$ do not involve $\{\gamma_1,...,\gamma_d\}$. Thus combining Equations ~\ref{eq:alpha0}-\ref{eq:alpha'2} we see that the sets $\{\phi(\alpha_{d+1}),...,\phi(\alpha_g)\}$ and $\{\phi(\beta_1),..,\phi(\beta_g)\}$ lie in $P$. Hence $\{\phi(\alpha_{1}),...,\phi(\alpha_{d})\}$ is a basis of $\mathcal{A}_0(K)/P$, establishing the first half of the first claim. Moreover, just as in the proof of Proposition~\ref{prop:lagrangprops}, $\phi(\alpha_{i})=\pm(t-1)[b_{i}]$ in $\mathcal{A}_0(K)$. Thus the sets $\{b_{1},...,b_{d}\}$ and $\{\phi(\alpha_{1}),...,\phi(\alpha_{d})\}$ generate the same subspace of $\mathcal{A}_0(K)$. This finishes the proof of claim $1$.

Similarly, to establish claim $2$ we need only show that $\{\phi(\beta_{1}),...,\phi(\beta_{g})\}$ spans $P$. We have already remarked that the span of $\{\phi(\beta_{1}),...,\phi(\beta_{g})\}$ is \emph{contained} in $P$. Left multiplication by  $N^{-1}$ transforms $\gamma$-coordinates into $\mathcal{A}$-coordinates. Note that $N^{-1}$ is given by
$$
\left(\begin{matrix} I & 0\cr -C^{-1}B & C^{-1}
\end{matrix}\right).
$$
From the form of this matrix we see that each of $\{\gamma_{g+1},...,\gamma_{2g}\}$ can be written in terms of $\{\beta_{1},...,\beta_{g}\}$. Therefore the span of $\{\phi(\alpha_{g+1}),...,\phi(\alpha_{2g})\}$ contains the span of $\{\phi(\gamma_{g+1}),...,\phi(\gamma_{2g})\}$, which contains $P$ by Equation~\ref{eq:alpha1}. Thus $\{\phi(\beta_{1}),...,\phi(\beta_{g})\}$ spans $P$ as claimed.

In particular, by claim $2$, $\mathfrak{m}$ represents $P$.
\end{proof}

\section{First-order signatures for links}\label{sec:firstordersigslinks}

We want to define the second-order signatures of a knot $K$ to be union of the first-order signatures of its derivatives. If $K$ has genus greater than one, its derivatives are links. Thus we need to define first-order signatures for links. There is very little in the literature about this topic. The reader interested only in genus one knots can skip this section.

We extend our notion of first-order signatures to pairs $(J,f)$ where $J=\{J_1,...,J_m\}$ is an ordered, oriented \emph{link} with trivial linking numbers and $f:\pi_1(M_J)\to A\cong\Z^k$ is an  epimorphism. Let $G=\pi_1(M_J)$. Recall that $\mathcal{A}_0^f(J)$ denotes the $\mathbb{Q}[\mathbb{Z}^k]$-torsion submodule of $H_1(M_J;\mathbb{Q}[\mathbb{Z}^k])$.

\begin{defn}\label{defn:linkfirstordersignatures} A \textbf{first-order $\boldsymbol{L^{(2)}}$-signature of a pair $\boldsymbol{(J,f)}$} is a real number $\rho(M_J, \phi)$ where $\phi:G\to G/K$ for some $\ker(\phi)=K\lhd G$ such that
\begin{itemize}
\item [1)] $G/K$ is PTFA;
\item [2)] $ G^{(2)}_{r}\subset K\subset \ker(f)$ so there is a commutative diagram
\begin{equation}\label{eq:firstordersig}
\begin{CD}
\phi:~G      @>>>    G/G^{(2)}_r  @>>> G/K \\
  @VVidV   @VVpV        @VVpV       \\
f:~G     @>>>  G/G^{(1)}_r    @>>>  G/G^{(1)}_{r}K @>>> \Z^k \\
\end{CD}
\end{equation}
\noindent Since $H_1(M_J;\Z[\Z^k])$ may be interpreted as the module $\ker(f)/[\ker(f),\ker(f)]$, there is a natural map
    $$
   \eta:~ K\subset\ker(f)\to H_1(M_J;\Z[\Z^k])\to H_1(M_J;\Q[\Z^k]),
   $$
   with respect to which we require that
\item [3)]   $\eta(K)\cap \mathcal{A}_0^f(J)$ spans an isotropic submodule of $\mathcal{A}_0^f(J)$, with respect to $\mathcal{B}\ell_0^f(J)$.
   \item [4)] $G^{(1)}/K \otimes_{\Z[\Z^k]} \Q[\Z^k]$  is a finitely-generated torsion $\Q[\Z^k]$-module.
\end{itemize}
\end{defn}

The last property will not play a role in this paper.

\begin{ex}\label{ex:firstsiglink} Consider the link $J$ shown in Figure~\ref{fig:linkfirstsigs}. Suppose $f:\pi_1(M_J)\to \Z^2$ is the abelianization map. We shall show below that any first-order signature of $(J,f)$ is a first-order signature of $J_1$ possibly added to $\rho_0(J_2)$. It will follow that, for example, if we choose $J_2$ to be trivial and choose $J_1$ to be any one of the various knots created in Examples~\ref{ex:knotfirst-ordersigs},~\ref{ex:figeight} or ~\ref{ex:nonzerofirst-ordersigs}, then all of the first-order signatures of the resulting link $J$ are non-zero.
\begin{figure}[htbp]
\setlength{\unitlength}{1pt}
\begin{picture}(200,150)
\put(20,0){\includegraphics{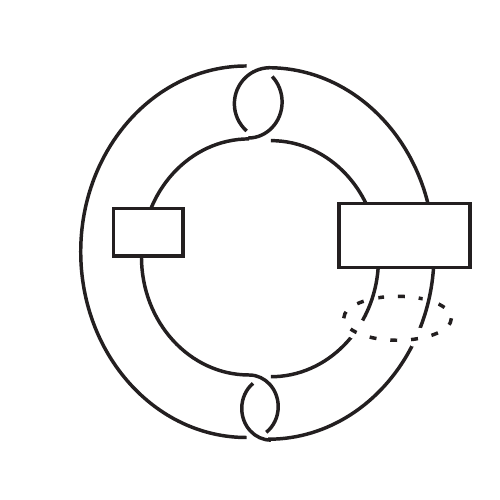}}
\put(0,73){$J~=$}
\put(156,50){$\eta$}
\put(56,74){$J_1$}
\put(130,73){$J_2$}
\end{picture}
\caption{}\label{fig:linkfirstsigs}
\end{figure}

Let $G=\pi_1(M_J)$ and let $\phi:G\to G/K$ correspond to a first-order signature for $(J,f)$. By ~\cite[Lemma 2.3]{CHL3}
$$
\rho(M_J,\phi)=\rho(M_U,\phi_U)+\rho(M_1,\phi_1)+\rho(M_2,\phi_2)
$$
where $U$ is the trivial link of $2$ components, $M_i$ are the zero-framed surgeries on $M_{J_i}$ and the $\phi_i$ are induced maps.
By that same Lemma, since $\eta$ is a commutator, the last term is either $0$ or $\rho_0(J_2)$, according as to whether $\eta\in K$ or not. Moreover it is easy to see that \textbf{any} $\rho$-invariant of $M_U=\#_{i=1}^2 S^1\times S^2$ is zero since any map $\phi$ extends over the exterior of the complement of trivial slice disks ($\natural _{i=1}^2~S^1\times B^3$)~\cite[Theorem 4.2]{COT}. Thus we need only verify that $\rho(M_1,\phi_1)$ is a first-order signature of $J_1$. Note that $\ker(f)=G^{(1)}$ so $\mathcal{A}_0^f(J)$ is merely the torsion submodule of $H_1(M_J;\Q[x^{\pm 1},y^{\pm 1}])$. Consider the commutative diagram below where $H=\pi_1(S^3-J_1)$ and $i:S^3-J_1\hookrightarrow M_J$ is the inclusion.
$$
\begin{diagram}\label{diagramex3}\dgARROWLENGTH=1.2em
\node{\eta_1:\ker(\phi_1)}\arrow{s,r}{i_*}\arrow{e,t}{}\node{H^{(1)}} \arrow{s,r}{i_*}\arrow{e,t}{}\node{H^{(1)}/H^{(2)}} \arrow{e,t}{}
\arrow{s,r}{i_*}\node{\mathcal{A}_0(J_1)}
\arrow{s,r}{i_*}\\
\node{\eta_J:\ker(\phi)=K}\arrow{e,t}{}\node{ G^{(1)}}\arrow{e,t}{}\node{H_1(M_J;\Z[x^{\pm 1},y^{\pm 1}])}\arrow{e,t}{}\node{H_1(M_J;\Q[x^{\pm 1},y^{\pm 1}])}
\end{diagram}
$$
A Mayer-Vietoris sequence as in ~\cite[Theorem 8.2]{C}\cite{Lei3} establishes that
\begin{equation}\label{eq:tensoredup}
H_1(M_J;\Q[x^{\pm 1},y^{\pm 1}])\cong H_1(M_U;\Q[x^{\pm 1},y^{\pm 1}])\oplus \mathcal{A}_0(J_1)\otimes_{\Q[t^{\pm 1}]}\Q[x^{\pm 1},y^{\pm 1}]
\end{equation}
where $t$ maps to the product of meridians $xy$. Since $U$ is trivial, $H_1(M_U;\Q[x^{\pm 1},y^{\pm 1}])$ is torsion-free. Hence $i$ induces an isomorphism
$$
i_*:\mathcal{A}_0(J_1)\otimes_{\Q[t^{\pm 1}]}\Q[x^{\pm 1},y^{\pm 1}]\cong \mathcal{A}_0^f(J).
$$
It follows that
$$
\eta_1(\ker(\phi_1))\subset \eta(K)\cap \mathcal{A}_0^f(J)
$$
which, by part $3)$ of Definition~\ref{defn:linkfirstordersignatures}, is isotropic with respect to the linking form on $\mathcal{A}_0^f(J)$ . This implies that any two elements of $\eta_1(\ker(\phi_1))$ have zero Blanchfield pairing when considered as elements in $\mathcal{A}_0^f(J)$. But the direct sum decomposition of Equation~\ref{eq:tensoredup} extends to the level of Blanchfield forms and since $t=x$ induces an embedding $\Z\hookrightarrow \Z\times\Z$ we may conclude that $\eta_1(\ker(\phi_1))$ is in fact isotropic with respect to the usual linking form on $\mathcal{A}_0(J_1)$ (all of this is detailed in ~\cite[Theorem 3.7]{Lei3}\cite[Theorem 3.3]{CHL4}. Thus $\phi_1$  corresponds to a first-order signature of $J_1$.
\end{ex}

\section{Second Order $L^{(2)}$ Signatures}\label{sec:secondorder sigs}

Second-order signatures for knots are loosely speaking, von Neumann $\rho$ invariants associated to coefficient systems that factor through $G/G^{(3)}$ and that might arise as the coefficient system associated to a slice disk exterior. They are potentially stronger than abelian or metabelian signatures. In this generality these have appeared already in ~\cite[Section 4]{COT}. However, in this generality, there are two serious problems. First, there are an infinite number of such coefficient systems. Secondly, it is not fully known how to use the condition ``might arise as the coefficient system associated to a slice disk exterior'' to restrict this number. Since the theorems are of the nature: ``If $K$ is slice then \emph{one} of the second-order signatures is zero'', these problems make such theorems often useless in practice. Here we make progress towards restricting the number of possible coefficient systems that need be considered, enough so that, especially for genus one knots, it is often finite.


\begin{defn}\label{defn:secondordersignatures}
Given an algebraically slice knot $K$, let $\mathcal{P}$ be the set of Lagrangians $P$ of $\mathcal{A}_0(K)$ for which the corresponding
first-order signature of $K$ is zero (see section 4). For each $P\in\mathcal{P}$, choose a metabolizer $\mathfrak{m}_{P}$ representing $P$. A \textbf{complete set of second-order $\boldsymbol{L^{(2)}}$-signatures} of $K$ is:
$$\bigcup_{P\in\mathcal{P}}\{\text{first-order signatures of }\frac{\partial K}{\partial\mathfrak{m}_{P}}\}.$$
\end{defn}

Note that if the first-order signature of $K$ corresponding to $P$ is non-zero then $P$ cannot be the Lagrangian corresponding to an actual slice disk for $K$. Therefore there is no need to consider second-order signatures for such $P$. If $\Delta_K(t)=1$ then $(\frac{\partial K}{\partial \mathfrak{m} },f)$ has $f=0$ so all the second-order signatures are zero.


It is an important point that a complete set of second-order signatures is often finite, especially for genus one knots. Any algebraically slice genus 1 knot $K$ has two Lagrangians (0 if $\Delta_{K}=1$), so the set $\mathcal{P}$ above is finite. Then, if each derivative $J_i$, $i=1,2$, can be chosen so that $\mathcal{A}_0(J_i)$ is cyclic, then the total number of distinct \emph{submodules} of $\mathcal{A}_0(J_i)$ is finite. Thus certainly the number of first-order signatures of $J_i$ is finite. This is assured, for example, if $J_i$ is a 2-bridge knot or if the Alexander polynomial is not divisible by the square of a prime polynomial (as in Example \ref{ex:derivsofhighergenus}).

A complete set of second-order signatures is not a knot invariant. However we do claim that these sets can obstruct a knot's being a slice knot. For example, the following greatly generalizes the theorem of Gilmer (and Cooper). The proof of these results will be given in Section~\ref{sec:nullbordisms}.

\begin{thm}\label{thm:main} If $K$ is a genus one slice knot then any set of second-order signatures (constructed using a genus one surface) contains zero. Specifically, for any genus one Seifert surface $\Sigma$, there is a homologically essential simple closed curve $J$ of self-linking zero on $\Sigma$, which has vanishing zero-th order signature and a vanishing first-order signature. (Beware that, if $\Delta_K(t)=1$ then, even if $K$ is not slice, the latter signatures of $J$ will be zero by definition since $f$ will be trivial).
\end{thm}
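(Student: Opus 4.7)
The plan is to use the slice disk $\Delta$ for $K$ to pick out a specific simple closed curve $J$ on $\Sigma$ and then to construct, from the slice-disk exterior $V = B^4 \setminus \nu(\Delta)$, a 4-manifold bounded by $M_J$ that witnesses the vanishing of one of $J$'s first-order signatures. I would first assume $\Delta_K(t) \neq 1$, for otherwise the conclusion holds trivially by the parenthetical in the statement (the relevant map $f$ is the zero map). Letting $P_\Delta \subset \mathcal{A}_0(K)$ be the Lagrangian corresponding to $\Delta$ as in Equation~\ref{eq:defP}, the elementary genus-one argument sketched immediately after Lemma~\ref{lem:lagrangianisrep} produces a homologically essential simple closed curve $J \subset \Sigma$ of self-linking zero whose class generates a metabolizer $\mathfrak{m}$ representing $P_\Delta$, so $J = \frac{\partial K}{\partial \mathfrak{m}}$. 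Since $J$ is a knot, $c=1$ and $\eta(J,f) = 0$, so Corollary~\ref{cor:sliceimpliesrhobound} immediately yields $\rho_0(J) = 0$.

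To exhibit a vanishing first-order signature for $J$, I would construct a 4-manifold $W$ with $\partial W = M_J$ by attaching handles to $V$. First attach a 2-handle to $V$ along $J \subset \Sigma \subset M_K = \partial V$ with framing zero (which matches the Seifert framing since the self-linking of $J$ is zero); call the result $W_1$. The key geometric observation is that surgering the once-punctured torus $\Sigma$ along $J$ produces an embedded disk $D'$ bounded by $K$, so in $\partial W_1$ the knot $K$ becomes an unknot bounding $D'$ with its $0$-framing, and consequently $\partial W_1 \cong M_J \,\#\,(S^1 \times S^2)$. Next attach a second 2-handle along the $S^1$-factor of this $(S^1\times S^2)$-summand with framing zero, yielding $W$ with $\partial W = M_J$. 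A Mayer--Vietoris computation, using that $V$ exhibits $P_\Delta$ as a Lagrangian of $\mathcal{B}\ell_0^K$, should show that
$$Q := \ker\bigl(\mathcal{A}_0(J) \to H_1(W;\Q[t,t^{-1}])\bigr)$$
is a Lagrangian of $\mathcal{B}\ell_0^J$. Moreover, $V$ is an $(n)$-solution for $K$ for all $n$, so combining its distinguished Lagrangian in $H_2$ with the classes of the new 2-handle cores should exhibit $W$ as a $(1.5)$-solution for $J$ with distinguished Lagrangian $Q$. The first-order $L^{(2)}$-signature of $J$ corresponding to $Q$ is then $\sigma_\Gamma^{(2)}(W,\psi) - \sigma(W)$ for the natural metabelian coefficient system $\psi: \pi_1(W) \to \Gamma = \pi_1(W)/\pi_1(W)^{(2)}_r$, and this vanishes by the $L^{(2)}$-index argument underlying Theorem~\ref{thm:firstordersigs=0}.

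The hard part will be verifying that $W$ is rigid enough to serve as a ``first-order slice-disk exterior'' for $J$. One must check (i) that $Q$ really is a Lagrangian of $\mathcal{B}\ell_0^J$ (not merely isotropic), (ii) that the meridional epimorphism $\pi_1(M_J) \to \Z$ extends through $\pi_1(W)$ to a metabelian PTFA quotient whose restriction to $\pi_1(M_J)$ is the representation $\phi_Q$ used to define the corresponding first-order signature, and (iii) that the twisted intersection form on $H_2(W;\Z\Gamma)$ admits a Lagrangian of half rank. These verifications are precisely what the null-bordism / higher-order-solvability machinery of Section~\ref{sec:nullbordisms} is designed to handle, so I expect the proof in the paper to reduce to checking that $W$ satisfies the hypotheses of a general result from that section.
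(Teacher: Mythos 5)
Your first half matches the paper: assuming $\Delta_K(t)\neq 1$, take the Lagrangian $P_\Delta$ of the slice disk, realize it by a curve $J$ on $\Sigma$ (the genus-one argument after Lemma~\ref{lem:lagrangianisrep}), and get $\rho_0(J)=0$ from Corollary~\ref{cor:sliceimpliesrhobound}. The second half, however, has a genuine gap, and it is located exactly at the point the paper is designed to handle differently. Your second $2$-handle is attached along the $S^1$-factor of the $S^1\times S^2$-summand, i.e.\ along a curve homologous to the meridian of $K$, which generates $H_1(W_1)\cong H_1(V)\cong\Z$. Attaching it therefore kills $H_1(W;\Q)$, so $\pi_1(W)^{(1)}_r=\pi_1(W)$ and every quotient $\pi_1(W)/\pi_1(W)^{(k)}_r$ (and likewise every ordinary derived quotient) is trivial. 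Your proposed coefficient system $\psi:\pi_1(W)\to\pi_1(W)/\pi_1(W)^{(2)}_r$ is then the trivial map; it cannot restrict to the metabelian representation $\phi_Q$ on $\pi_1(M_J)$, and the resulting $\rho$-invariant is $0$ for vacuous reasons, detecting nothing. This is precisely why the paper closes off the $S^1\times S^2$-summand with a $3$-handle along the canonical $2$-sphere $S$ (Proposition~\ref{prop:Efacts}) rather than a $2$-handle: that choice preserves $H_1(M_K)\cong H_1(W)\cong\Z$ (Lemma~\ref{lem:Wfacts}), which is what keeps the derived-series quotients of $\pi_1(W)$ nontrivial.

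Two further points would still fail even if you repaired the handle attachment. First, there is an off-by-one in the derived series: since the meridians of $J$ are isotopic in $E$ to the band meridians $\alpha_i$, the image of $\pi_1(M_J)$ lies in $\pi_1(W)^{(1)}_r$, so the metabelian quotient $\pi_1(W)/\pi_1(W)^{(2)}_r$ restricts to an \emph{abelian} representation of $\pi_1(M_J)$ and only controls the zero-th order signature of $J$; to see a first-order signature of $J$ one must use $\pi_1(W)/\pi_1(W)^{(3)}_r$, as in the proofs of Theorem~\ref{thm:nbimpliesfirstsig=0} and Proposition~\ref{prop:derivisnullbord}. Second, $W$ can never be a $(1.5)$-solution for $J$: the definition requires $H_1(M_J;\Z)\to H_1(W;\Z)$ to be an isomorphism, whereas here it is the zero map (Lemma~\ref{lem:Wfacts}); this is the paper's stated reason for introducing null-bordisms at all. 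Consequently the vanishing cannot be obtained from the $(1.5)$-solvability theorem underlying Theorem~\ref{thm:firstordersigs=0}; the paper instead gets $\rho(M_J,\phi)=0$ from the rank--signature bound for null-bordisms (Theorem~\ref{thm:CHLsigvanishes}, using $\beta_1(M_J)=1$ and rank zero), and then identifies the kernel of $\phi$ with an \emph{isotropic} submodule $P_W$ via Theorem~\ref{thm:nontriviality} --- not a Lagrangian, which is exactly why Definition~\ref{defn:firstordersignatures} allows isotropic submodules. Your claim that $Q$ is a Lagrangian is unsupported and unnecessary.
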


\begin{ex}\label{ex:secordersiggenusone} We now give one of the promised families of knots for which the slice obstructions given by our second-order signatures are stronger than those imposed by any abelian or metabelian invariants but which cannot be detected by the second-order invariants of ~\cite{COT} and cannot be detected by the techniques of ~\cite{CHL4} (since they are not iterated satellites). Consider the arbitrary genus one knot $K$ as shown on the left-hand side of Figure~\ref{fig:arbgenus1AS}. Assume that the knot $L_1$ (shown dashed in Figure~\ref{fig:arbgenus1AS} and discussed in Example~\ref{ex:arbgenusonederiv}) is algebraically slice. Then $K$ cannot be distinquished from a slice link by any metabelian invariants. (In fact, since $L_1$ is $(.5)$-solvable by ~\cite[Remark 1.3]{COT}, $K$ is $(1.5)-$solvable by ~\cite[Thms.8.9, 9.11]{COT}.) Since we make no assumptions on the link $L$, but only on the knot type of its components, there are many examples where $L$ is not a split link (see right-hand side of Figure~\ref{fig:arbgenus1AS}) nor even a satellite.

Now suppose $t=0$ for simplicity, and assume $\ell\notin \{-1,0\}$ to ensure that $\Delta(K)\neq 1$. Then, as we saw in Example~\ref{ex:arbgenusonederiv}, $K$ has two Lagrangians, $P_i$, $i=1,2$, with metabolizers $\mathfrak{m}_i$ represented by the cores of the two bands and derivatives $\frac{\partial K}{\partial \mathfrak{m}_i }$ equal to the knot types $L_i$
of the components of $L$. By Corollary~\ref{cor:firstrho=rhozeroofder} the first-order signature of $K$ corresponding to $P_i$ is $\rho_0(L_i)$ (we use that $\Delta_K\neq 1$ to ensure that $f\neq 0$). Suppose that $\rho_0(L_2)\neq 0$.
Then by Definition~\ref{defn:secondordersignatures}, the set of second-order signatures of $K$ is the set of first-order signatures of the knot $L_1$. Finally choose $L_1$ to be an algebraically slice knot all of whose first-order signatures are non-zero, such as one of the families given in Examples~\ref{ex:knotfirst-ordersigs} and ~\ref{ex:nonzerofirst-ordersigs}. Then this complete set of second-order signatures does not contain zero. Therefore any such $K$ violates Theorem~\ref{thm:main} and thus is not a slice knot.
\end{ex}

If the genus of $K$ is greater than one then its derivatives will be links. Here, for simplicity, our results are restricted to links $J$ with maximal ``higher-order Alexander nullities''.

\begin{thm}\label{thm:main2} Suppose $K$ is a slice knot with the property that for each Lagrangian $P$ for which the first-order signature of $K$ corresponding to $P$ vanishes, it is possible to choose a representative $(\frac{\partial K}{\partial \mathfrak{m}},f)$ that is a link of maximal Alexander nullity. Then some member of any complete set of second-order signatures (computed using \textbf{such} representatives) has absolute value at most genus$(\Sigma)~-1$. Moreover, if it is possible to choose each representative $(J,f)$ to be an infected trivial link then any complete set of second-order signatures (computed using \textbf{such} representatives) contains zero.
\end{thm}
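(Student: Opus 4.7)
The plan is to use a slice disk for $K$ to build a 4-manifold bounding $M_J$ with controlled topology, and to bound one of the first-order signatures of $J$ as an $L^{(2)}$-signature defect.

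\emph{Setup.} Since $K$ is slice, fix a slice disk $\Delta$ with exterior $V = B^4\setminus\nu(\Delta)$. By Theorem~\ref{thm:firstordersigs=0}, the Lagrangian $P_\Delta = \ker\bigl(H_1(M_K;\Q[t^{\pm 1}]) \to H_1(V;\Q[t^{\pm 1}])\bigr)$ lies in $\mathcal{P}$, and by hypothesis the chosen metabolizer $\mathfrak{m}$ representing it yields a derivative $(J,f) = \frac{\partial K}{\partial \mathfrak{m}}$ of maximal Alexander nullity $g-1$, where $g = \mathrm{genus}(\Sigma)$. It suffices to exhibit a single first-order $L^{(2)}$-signature $\rho(M_J,\phi)$ of $(J,f)$ with $|\rho(M_J,\phi)|\leq g-1$.

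\emph{Bounding 4-manifold.} The components of $J$ lie on $\Sigma\subset \partial V = M_K$ with vanishing Seifert framings (since $\mathfrak{m}$ is a Lagrangian for the Seifert form) and vanishing pairwise linking numbers. Attach $g$ zero-framed 2-handles to $V$ along $J$ to form $W_1$; its boundary is $M_{K\cup J}$, which equals $M_J \#(S^1\times S^2)$ because $K$ becomes null-homotopic after surgery on $J$ (cut $\Sigma$ along $J$ and cap off using that $\mathfrak{m}$ is a metabolizer). Glue in $S^1\times B^3$ along the $S^1\times S^2$ summand to produce $W$ with $\partial W = M_J$. Since $H_2(V;\Z)=0$ and $H_2(S^1\times B^3) = 0$, a Mayer-Vietoris computation gives $\mathrm{rank}\,H_2(W;\Q) = g-1$ (one class among the capped-off Seifert surfaces is killed by the $S^1\times B^3$ filling). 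Moreover the intersection form on these capped-off surfaces is computed by the symmetrized Seifert form restricted to $\mathfrak{m}$, which vanishes, so $\sigma(W)=0$.

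\emph{Bounding the signature.} Set $\Gamma = \pi_1(W)/\pi_1(W)^{(2)}_r$, a PTFA group, and let $\phi:\pi_1(M_J)\to\Gamma$ be induced by inclusion. I would verify that $\phi$ defines a first-order signature per Definition~\ref{defn:linkfirstordersignatures}: compatibility with the canonical $f$ is automatic from matching abelianizations under $M_K\hookrightarrow V$, and the isotropy of $\ker\bigl(H_1(M_J;\Z[A])\to H_1(W;\Z[A])\bigr)$ with respect to $\mathcal{B}\ell_0^f(J)$ is the standard Cochran-Orr-Teichner consequence of Poincaré-Lefschetz duality on $(W,\partial W)$. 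Since $\phi$ extends over $W$,
\[
\rho(M_J,\phi) \;=\; \sigma^{(2)}_\Gamma(W)-\sigma(W) \;=\; \sigma^{(2)}_\Gamma(W).
\]
An $L^{(2)}$-Euler characteristic calculation, using the maximal Alexander nullity $\eta(J,f)=g-1$ to control $b_1^{(2)}(M_J;\mathcal{N}\Gamma)$ via the long exact sequence of $(W,M_J)$ with $\mathcal{N}\Gamma$-coefficients, shows that the $\mathcal{N}\Gamma$-rank of $H_2(W;\mathcal{N}\Gamma)$ is at most $g-1$, and hence $|\sigma^{(2)}_\Gamma(W)|\leq g-1$.

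\emph{Infected trivial link case.} When each derivative is an infected trivial link $J = U(\eta_1,\ldots,\eta_r,K_1,\ldots,K_r)$ with $\eta_i\in\pi_1(S^3\setminus U)^{(1)}$, replace the above construction by $W'$ built from the handlebody $Y\cong\natural^g(S^1\times B^3)$ bounding $M_U$, spliced with the standard infection cobordisms from $B^4\setminus\nu(K_i)$ along meridional disks for the $\eta_i$. Then $\partial W' = M_J$, the induced $\phi'$ remains a valid first-order signature, and by construction each $\eta_i$ bounds a disk in $Y$, so that $\phi'_U(\eta_i) = 1$. Lemma~\ref{lem:additivity} then gives
\[
\rho(M_J,\phi') \;=\; \rho(M_U,\phi'_U) + \sum_i \epsilon_i\,\rho_0(K_i) \;=\; 0,
\]
because $\phi'_U$ extends over $Y$ with $H_2(Y)=0$ and $\sigma(Y)=0$ (forcing $\rho(M_U,\phi'_U)=0$), and each $\epsilon_i = 0$ since $\phi'_U(\eta_i)=1$. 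The main obstacle in the overall argument will be the $L^{(2)}$-rank bound on $H_2(W;\mathcal{N}\Gamma)$ in the third paragraph: the maximal Alexander nullity hypothesis is essential to achieve $g-1$ rather than the cruder bound $2g-1$ one obtains from generic $L^{(2)}$-Betti number estimates.
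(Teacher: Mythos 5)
Your first clause follows the paper's general route (glue $2$-handles along $J$ to the slice disk exterior, cap off the $S^1\times S^2$, extend the metabelian coefficient system, read off $\rho$ as a signature defect), but the step you dismiss as ``the standard Cochran--Orr--Teichner consequence of Poincar\'e--Lefschetz duality'' is precisely where the real work, and the maximal-nullity hypothesis, live. Your $W$ is not remotely a homology slice-disk exterior: $H_1(M_J)\to H_1(W)$ is the \emph{zero} map and $H_2(W;\Z)\cong\Z^{g}$ (not $\Z^{g-1}$ as you claim --- the class killed by the capping is the capped-off Seifert surface of $K$, which already dies in $V$, not one of the capped-off surfaces of the $J_i$). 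For such a $W$ the self-annihilation of $\ker\bigl(H_1(M_J;\Q\Lambda)\to H_1(W;\Q\Lambda)\bigr)$ with respect to $\mathcal{B}\ell_0^f(J)$ is exactly Theorem~\ref{thm:selfannihil} (\cite[Thm.~6.3]{CHL3}), whose hypothesis is $\operatorname{rank}_{\Q\Lambda}H_1(M_J;\Q\Lambda)=\beta_1(M_J)-1$; this is where $\eta(J,f)=g-1$ is used, via flatness of $\Q\Lambda$ over $\Q[\Z^d]$. By contrast, the numerical bound $g-1$ does \emph{not} need maximal nullity at all: it is the generic estimate $|\rho(M_J,\phi)|\le\beta_1(M_J)-1$ of Theorem~\ref{thm:CHLsigvanishes} (equivalently, your Euler-characteristic count, since $\chi(W)=g-1$). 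So your proposal puts the hypothesis in the wrong place and leaves unproved the verification that $\rho(M_J,\phi)$ is actually a member of the complete set, i.e.\ satisfies condition (3) (and (4)) of Definition~\ref{defn:linkfirstordersignatures}; without that, you have bounded some $\rho$-invariant of $M_J$, not a second-order signature of $K$.

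The infected-trivial-link case as you argue it is wrong. The curves $\eta_i$ lie in the commutator subgroup of $\pi_1(M_U)\cong F_g$, and $\pi_1(M_U)\to\pi_1(Y)$ is an isomorphism for $Y=\natural^g(S^1\times B^3)$, so the $\eta_i$ do \emph{not} bound disks in $Y$ and you cannot conclude $\phi'_U(\eta_i)=1$; whether an infection term survives in Lemma~\ref{lem:additivity} depends on the particular representation. Worse, your argument makes no use of the slice disk: if it were correct it would show that \emph{every} infected trivial link has a vanishing first-order signature among those attached to its canonical $f$, which contradicts Examples~\ref{ex:firstsiglink} and~\ref{ex:mainexampleshighergenus}, where all first-order signatures of the infected trivial link $\{L_1,U\}$ are nonzero --- that nonvanishing is exactly how the paper detects non-slice knots, so the theorem's conclusion would be destroyed, not proved. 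The correct mechanism is the paper's: the representation must be the one extending over the null-bordism $W$ built from the slice disk (Proposition~\ref{prop:derivisnullbord}), and then the longitudes of the infecting knots lie in $\pi_1(M_J)^{(2)}\subset\ker\phi$, so \cite[Lemma~6.8]{CHL4} gives $\operatorname{rank}_{\Z\G}H_1(M_J;\Z\G)=\beta_1(M_J)-1$ and the refined bound of Theorem~\ref{thm:CHLsigvanishes} forces $\rho(M_J,\phi)=0$, as in Theorem~\ref{thm:nblinkimpliesfirstsig=0}.
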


Here $\Sigma$ is the Seifert surface used to compute $\frac{\partial K}{\partial \mathfrak{m}}$. Note that in the case of genus one Seifert surfaces $\frac{\partial K}{\partial \mathfrak{m}}$ is a knot, which always has maximal Alexander nullity. Thus Theorem~\ref{thm:main} is a special case of Theorem~\ref{thm:main2}.

\begin{ex}\label{ex:mainexampleshighergenus} We now exhibit families of higher-genus non-satellite knots with vanishing classical and metabelian invariants for which the second-order signatures of Theorem~\ref{thm:main2} obstruct their being slice knots. Moreover these examples cannot be detected by techniques of ~\cite{COT} (they are even distinct up to concordance). Since they are not formed by iterated satellite constructions, the techniques of ~\cite{CHL3} cannot be directly applied.
Let $K$ be one of the knots of Figure~\ref{fig:derivsgenustwo} under the assumptions of Example~\ref{ex:derivsofhighergenus}. In that example we computed that $K$ has $4$ Lagrangians, only one of which has a zero first-order signature. In this case $\frac{\partial K}{\partial \mathfrak{m}}$ is the split link $\{L_1,\mathbb{L}_2\}$. Since these knots are algebraically slice, $K$ is $(1.5)$-solvable by ~\cite[Theorem 8.9]{COT}. Hence it cannot be distinquished from slice knot by any metabelian invariants ~\cite[Theorems 9.11,9.12]{COT}.

For simplicity assume furthermore that $\mathbb{L}_2$ is the trivial knot. Then, by Definition~\ref{defn:secondordersignatures}, a complete set of second-order signatures of $K$ is the set of first-order signatures of the link $\{L_1,U\}$. It can be checked that the associated map $f$ is the just the abelianization map since the rank of $\mathcal{A}_0(K)/P_{12}$ is $2$. This set of first-order signatures was computed in Example~\ref{ex:firstsiglink} and does not contain zero as long as $L_1$ is chosen to be an algebraically slice knot that itself has no non-zero first-order signatures,
such as the knots in Examples~\ref{ex:knotfirst-ordersigs} and ~\ref{ex:nonzerofirst-ordersigs}. Thus this complete set of second-order signatures does not contain zero. But $\{L_1,U\}$ is an infected trivial link so $K$ violates the last clause of Theorem~\ref{thm:main2}. Therefore no such $K$ is a slice knot.
\end{ex}

\section{Null-Bordisms}\label{sec:nullbordisms}

Knot concordance is intimately related to $4$-manifolds and to the homology cobordism type of the $3$-manifold obtained by zero surgery on the knot. For example the following are well-known. If $K$ is concordant to $J$ then $M_K$ and $M_J$ are homology cobordant. Moreover $K$ bounds a slice disk in some homology $4$-ball if and only if $M_K$ bounds a $4$-manifold $V$ (namely the exterior of the slice disk) with the homology of $S^1$. In this section we make the geometric observation that there is a canonical cobordism, $E$, between the zero-framed surgeries $M_K$ and $M_{\frac{\partial K}{\partial \mathfrak{m} }}$. If this cobordism were a \emph{homology} cobordism then it would be reasonable to expect that $K$ is slice if and only if one of its derivatives is slice. But this cobordism is \emph{not} a homology cobordism~- it is not even a product on $H_1$. So then the (somewhat surprising) key algebraic step (which already appeared as a tool in ~\cite{CHL1A}\cite{CHL3}) is to show that, nonetheless, such cobordisms are sufficient to guarantee that higher-order signature invariants of $K$ are related to lower-order signature invariants of $\frac{\partial K}{\partial \mathfrak{m} }$. Moreover, by gluing $V$ to $E$ along $M_K$ we will answer the question: ``If $K$ is a slice knot then what type of $4$-manifold does $M_{\frac{\partial K}{\partial \mathfrak{m} }}$ bound?'' We find that this yields a new and useful category of $4$-manifolds, called \textbf{null-bordisms}. We prove that a knot's being null-bordant implies that both its zero-th and first-order signatures vanish. Using this we prove Theorems~\ref{thm:main} and ~\ref{thm:main2}.

\subsection{A cobordism between $M_K$ and $M_{\frac{\partial K}{\partial \mathfrak{m} }}$}\label{subsec:bordisms}\

\

Suppose $K$ is an algebraically slice knot, $\Sigma$ is a genus $g$ Seifert surface for $K$, $\mathfrak{m}$ is a metabolizer for the Seifert form on $H_1(\Sigma)$, and $\frac{\partial K}{\partial \mathfrak{m} }=J=\{J_1,...,J_g\}$. We describe a cobordism, denoted $E$, from $M_K$ to $M_{J}$. These cobordisms are closely related to, but much more general than, the cobordisms in ~\cite[Section 2]{CHL1A}\cite[Section 2]{CHL3}. Let $C$ denote the $4$-manifold obtained from $M_K\times [0,1]$ by adding $2$-handles $\{h_1,...,h_g\}$ along the components of $J$ in $M_K\times \{1\}$ using framing zero with respect to $S^3$. Note that since $J$ forms half of a symplectic basis for $\Sigma$, the latter is surgered in a canonical way to a disk inside $\partial^+C$. Together with a disk bounding (the longitude of) $K$ in $M_K$, this forms a canonical $2$-sphere $S$ embedded in $\partial^+C$. Let $E$ denote the $4$-manifold obtained from $C$ by adding a $3$-handle along $S$.

Here is another point of view from which we see that in fact $\partial^+C\cong M_{J}~\# ~S^1\times S^2$. By definition, $\partial^+C$ is the result of zero-framed Dehn surgery on the components of $J$ in $M_K$. However, since $K$ and $J$ are disjoint and have zero linking numbers in $S^3$, we can reverse the order of the surgeries and consider $\partial^+C$ as the result of a single zero-framed surgery along $K$ viewed as a knot in $M_{J}$. But as observed in the previous paragraph, $K$ is unknotted in $M_{J}$. It follows immediately that $\partial^+C\cong M_{J}~\#~ S^1\times S^2$. It also follows that $\partial^+C$ is the $3$-manifold obtained as the $+$-boundary of $M_{J}\times [0,1]$ after adding a trivial zero-framed $2$-handle.  From this point of view, the subsequent $3$-handle addition precisely cancels this $2$-handle, yielding that $\partial^+E\cong M_{J}$.

Therefore we have established the bulk of:

\begin{prop}\label{prop:Efacts} The following hold for $E$ above
\begin{itemize}
\item [1)] $\partial E=\partial^-E\coprod \partial^+E\cong -M_K\coprod M_{J}$.
\item [2)] The map $i_*:\pi_1(M_K)\to \pi_1(E)$ is surjective with kernel the normal closure of the set of loops represented by the components of $J$.
\item [3)] The meridian of the band on which $J_i\hookrightarrow \Sigma\hookrightarrow M_K=\partial^-E$ lies is isotopic in $E$ to a meridian of $J_i$ in $M_{J}=\partial^+E$.
\item [4)] $H_1(M_K;\mathbb{Z})\to H_1(E;\mathbb{Z})$ is an isomorphism, while
\item [5)] $H_1(M_{J};\mathbb{Z})\to H_1(E;\mathbb{Z})$ is the zero map.
\item [6)] $H_2(M_K;\mathbb{Z})\to H_2(E;\mathbb{Z})$ is the zero map, and
\item [7)] $H_2(E;\mathbb{Z})/i_*(H_2(\partial^+E;\mathbb{Z}))=0$.
\end{itemize}
\end{prop}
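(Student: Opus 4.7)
The plan is to verify the seven assertions in order, leaning on the handle-theoretic construction of $E$ and some routine long-exact-sequence arguments. Part (1) is essentially in the paragraph preceding the proposition: by construction $\partial^- E = -M_K$, and the discussion exhibits $\partial^+ E \cong M_J$ after the $3$-handle cancels the $S^1\times S^2$ summand created by the $2$-handles. For (2), I will apply the standard facts that attaching a $2$-handle along a curve $\gamma$ kills precisely the conjugacy class of $[\gamma]$ in $\pi_1$, while attaching a $3$-handle along a simply-connected sphere does not affect $\pi_1$; this yields $\pi_1(E)\cong \pi_1(M_K)/\langle\langle [J_1],\dots,[J_g]\rangle\rangle$. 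For (3) the key geometric point is that $J_i$ is the core of a band of $\Sigma$, so a meridian of the band is already isotopic in $M_K$ to a meridian of $J_i$ in $M_K$; this meridian bounds the cocore of the $2$-handle $h_i$, and the other boundary circle of that cocore lies in $\partial^+ C$ as a meridian of $J_i$ in $M_J$, the $3$-handle being disjoint from the isotopy.

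For (4), since $J_i \subset \Sigma$ and $\partial \Sigma = K$, each $J_i$ has linking number zero with $K$, so $[J_i]=0$ in $H_1(M_K)\cong \mathbb{Z}$. Hence attaching $2$-handles along the $J_i$ does not alter $H_1$, and $3$-handles never do, so $H_1(M_K)\to H_1(E)$ is an isomorphism. Property (5) then follows by combining (3) with the observation that each band meridian is itself null-homologous in $M_K$ (it bounds a small disk transverse to the band in $S^3\setminus K$); consequently the generators of $H_1(M_J)$ map to zero in $H_1(E)=H_1(M_K)$.

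The main obstacle is (6) and (7), where I must pin down $H_2(E)$ and identify all of its classes with images from $M_J$. For (6), I will use that $\{[J_i]\}$ is half of a symplectic basis for $H_1(\Sigma)$: this is exactly the condition needed to surger the closed surface $\widehat{\Sigma}$ (generator of $H_2(M_K)$) along each $J_i$ in succession using the core of $h_i$, reducing its genus by one at each step while preserving its homology class in $C$ (the difference is always a $2$-sphere bounding a half-ball in the respective $2$-handle). After $g$ such surgeries $\widehat{\Sigma}$ becomes exactly the $2$-sphere $S$ along which the $3$-handle is glued, giving $[\widehat{\Sigma}]=[S]=0$ in $H_2(E)$. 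For (7), the handle-theoretic Mayer--Vietoris computation yields
\[
H_2(C)\cong \mathbb{Z}[\widehat{\Sigma}]\oplus \bigoplus_{i=1}^g \mathbb{Z}\bigl[F_i\cup \mathrm{core}(h_i)\bigr],
\]
where each $F_i\subset M_K$ is a surface bounding the null-homologous $J_i$; the $3$-handle attachment kills $[\widehat{\Sigma}]=[S]$, leaving $H_2(E)\cong\mathbb{Z}^g$. To finish I will identify each generator $[F_i\cup \mathrm{core}(h_i)]$ with the image of a closed surface $\widehat{F}_i=F_i'\cup E_i\subset M_J$ generating a summand of $H_2(M_J)$, where $E_i$ is the meridional disk of the $i$-th surgery solid torus and $F_i'$ extends $F_i$ across the annulus on $\partial\nu(J_i)$ from $J_i$ to $\lambda_{J_i}$. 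The difference between these two closed surfaces is a $2$-sphere contained in the $4$-ball $h_i$, hence null-homologous in $E$, proving (7).
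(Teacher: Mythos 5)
Your step (3) contains a genuine error, and it is the crucial one. You claim that the meridian $\alpha_i$ of the band on which $J_i$ lies is ``already isotopic in $M_K$ to a meridian of $J_i$ in $M_K$.'' This is false in general, and it fails exactly in the cases that matter. A meridian $\mu_i$ of $J_i$ bounds a small embedded normal disk to $J_i$ that is disjoint from $K$, so $\mu_i$ is null-homotopic in $M_K$. The band meridian $\alpha_i$, by contrast, bounds a cross-sectional disk of the band that meets $K$ in two points (the endpoints of the cross-sectional arc lie on the two edges of the band, which are arcs of $\partial\Sigma=K$); in fact $\phi(\alpha_i)=\pm(t-1)[b_i]$ in $\mathcal{A}_0(K)=H_1(M_K;\Q[t,t^{-1}])$, and by Proposition~\ref{prop:lagrangprops} the classes $\phi(\alpha_i)$ span $\mathcal{A}_0(K)/P$. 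So whenever $\Delta_K(t)\neq 1$, some $\alpha_i$ is non-zero in the Alexander module and hence not even freely null-homotopic in $M_K$, while $\mu_i$ is; no isotopy in $M_K$ between them can exist. (This non-triviality of the $\alpha_i$ in $M_K$ is precisely what the paper exploits later, e.g.\ in Proposition~\ref{prop:derivisnullbord} to show the induced map $f$ is non-trivial, so an argument making $\alpha_i$ isotopic to $\mu_i$ inside $M_K$ would in fact contradict the rest of the paper.) The whole content of Property~3 is that the identification only happens after the $2$-handles are attached: the paper exhibits $\partial^+C\cong M_J\,\#\,S^1\times S^2$ by sliding $K$ twice over each $J_i$ (i.e.\ over the handles $h_i$), and it is under this identification that $\alpha_i$ is carried to a meridian of $J_i$, the $3$-handle not disturbing this. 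Your appeal to the cocore of $h_i$ is also garbled (a cocore is a disk; its single boundary circle is the belt circle in $\partial^+C$), but the fixable imprecision there is not the problem --- the non-existent isotopy in $M_K$ is.

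Two smaller points. In (5), your parenthetical reason that the band meridian is null-homologous in $M_K$ --- that ``it bounds a small disk transverse to the band in $S^3\setminus K$'' --- is wrong, since that disk meets $K$ twice; the correct reason is that the two intersection points have opposite signs, so $\operatorname{lk}(\alpha_i,K)=0$. In (7), your closed surfaces $\widehat{F}_i\subset M_J$ need not be disjoint from the sphere $S$ (a Seifert surface for $J_i$ will in general meet $\Sigma$ away from $J_i$), so you cannot simply regard $\widehat{F}_i$ as lying in $\partial^+C$ in order to compare it with $F_i\cup\mathrm{core}(h_i)$ by a sphere in $h_i$; the paper's extra observation that $E-C$ is obtained from $\partial^+E$ by attaching a single $1$-handle, so that $H_2(\partial^+E)\to H_2(E-C)$ is onto, is there precisely to avoid this issue. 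Your treatment of (1), (2), (4) and (6) is fine and is essentially the paper's argument.
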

\begin{proof} Property $1$ was established above. Property $2$ is immediate from the handle structure of $E$. Since the components of $J$ lie on $\Sigma$, they are null-homologous in $M_K$ and Property $4$ thus follows from Property $2$.

To establish the remaining properties, we consider the following more concrete version of the analysis of $\partial^+C$. Consider zero surgery on the union of $K$ and $J$.
\begin{figure}[htbp]
\setlength{\unitlength}{1pt}
\begin{picture}(207,151)
\put(0,0){\includegraphics{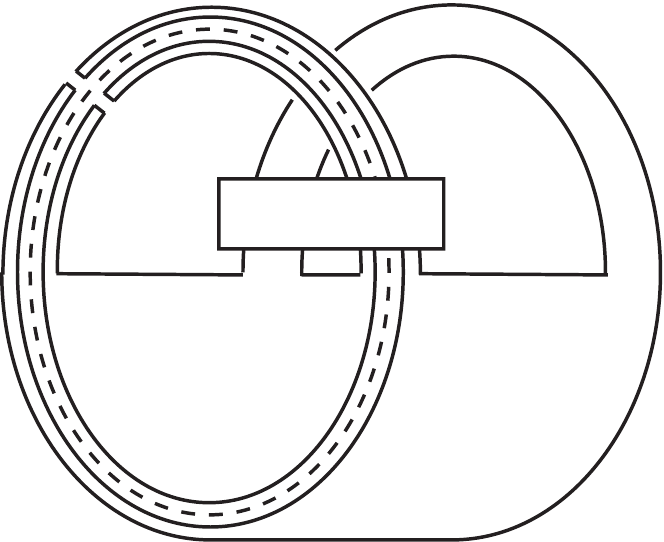}}
\put(89,91){$L$}
\end{picture}
\caption{Image of $K$ after sliding twice over each component of $J$}\label{fig:handleslides}
\end{figure}
For each $i$ slide both strands (of $K$) of the band on which $J_i$ lies over $J_i$. A genus one example is shown in Figure~\ref{fig:handleslides}. Observe that after sliding $K$ $2g$ times in this way, it becomes unknotted and split off from $J$. Thus $\partial^+C\cong M_{J}~\#~ S^1\times S^2$.  Note further that, under this homeomorphism, the meridian, $\alpha_i$, of the band on which $J_i$ lies is carried to the meridian of $J_i$. The addition of the $3$-handle does not alter this fact. This establishes Property $3$. Now since the meridians of the bands of $\Sigma$ are null-homologous in $M_K$, the meridians of the copy of $J$ in $\partial^+E$ are null-homologous in $E$. This establishes Property $5$.

Since $C$ is obtained by adding $2$-handles $\{h_1,...,h_g\}$ along the null-homologous circles $J_i$,  $H_2(C)$ is generated by $H_2(M_K)$ together with $g$ embedded surfaces $F_i$ which can be constructed as follows: choose Seifert surfaces in $S^3$ for the $J_i$ whose interiors avoid $K\cup J$ (here we use that $J$ represents a metabolizer) and then cap these off with copies of the cores of the $2$-handles that lie in $\partial^+C$. By construction,  the $F_i$ lie in $\partial^+C$ so are in the image of $H_2(\partial^+C) \to H_2(C)$.
The final generator (for the image of $H_2(M_K) \to H_2(C)$ can be taken to be a copy of $\Sigma$ (capped off). Note that this also can be taken to lie in  $\partial^+C$. But in $\partial^+C$ we have seen that this capped-off $\Sigma$ is homologous (by surgering along the disks) to the embedded $2$-sphere to which the $3$-handle is attached. This gives Property $6$. We have thus shown that
$$
H_2(\partial^+C)\to H_2(C)\to H_2(E)\cong \mathbb{Z}^g
$$
is surjective with basis $\{F_i\}$. But now if $E-C$ denotes the cobordism from $\partial^+C$ to $\partial^ +E$ consisting of the single $3$-handle addition then
$$
H_2(\partial^+E)\to H_2(E-C)
$$
is surjective since $E-C$ is obtained from $\partial^+ E$ by adding a $1$-handle. Hence
the classes represented by the $F_i$ in $H_2(\partial^+C)\to H_2(E-C)$ have representatives in $\partial^+ E$. This gives Property $7$.
\end{proof}

\subsection{Null-bordism}\label{subsec:nullbordism}\

\

If $K$ is a slice knot then $M_K$ bounds a $4$-manifold with the homology of a circle. In this case, what is true of the zero surgery on a derivative of $K$?

Suppose $K$ is a slice knot, $\Delta$ is a slice disk for $K$, and $V=B^4-\Delta$ so $\partial V=M_K$. Recall from ~\ref{eq:defP} that there is a Lagrangian $P_\Delta$ associated to $\Delta$. Let $J$ be a derivative of $K$ with respect a metabolizer that represents $P_\Delta$ for some Seifert surface, $\Sigma$. Finally let $W$ be the $4$-manifold obtained by gluing $V$ along $M_K$ to the manifold $E$ of Proposition~\ref{prop:Efacts}, so that $\partial W=M_J$. Then by Mayer-Vietoris and Proposition~\ref{prop:Efacts} we easily deduce:

\begin{lem}\label{lem:Wfacts} If $K$ is a slice knot, then corresponding to any slice disk $\Delta$ and any derivative $J$ representing the associated Lagrangian $P_\Delta$ there is a compact oriented $4$-manifold $W$ such that
\begin{itemize}
\item [1)] $\partial W= M_{J}$.
\item [2)] $W=(B^4-\Delta)\cup_{M_K}E$ and the unbased meridian of the $i^{th}$ component of $J$ in $M_{J}$ is isotopic in $E$ to the meridian of the band on which $J_i\hookrightarrow \Sigma\hookrightarrow M_K$ lies;
\item [3)] $H_1(M_K;\mathbb{Z})\to H_1(W;\mathbb{Z})$ is an isomorphism, while
\item [4)] $H_1(M_J;\mathbb{Z})\to H_1(W;\mathbb{Z})$ is the zero map, and
\item [5)] $H_2(W;\mathbb{Z})/i_*(H_2(\partial W;\mathbb{Z}))=0$.
\end{itemize}
\end{lem}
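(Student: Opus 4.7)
The plan is to construct $W$ explicitly by the formula in statement (2) and then verify each homological claim by a single Mayer--Vietoris computation, using Proposition~\ref{prop:Efacts} as a black box. Set $V = B^4 - \Delta$ and define $W = V \cup_{M_K} E$, where $E$ is the cobordism from $M_K$ to $M_J$ constructed in Section~\ref{subsec:bordisms}. Claim (1) is immediate since $\partial V = M_K$ and $\partial E = -M_K \sqcup M_J$, so the $M_K$ components cancel under the gluing. Claim (2) is a restatement of the construction together with Property (3) of Proposition~\ref{prop:Efacts}, which provides the meridional isotopy inside $E \subset W$.

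For (3) and (4), I would run the Mayer--Vietoris sequence for the decomposition $W = V \cup_{M_K} E$ in degree $1$. The relevant inputs are that $H_1(V;\Z) \cong \Z$ (slice disk exterior) with $H_1(M_K;\Z) \to H_1(V;\Z)$ an isomorphism (meridian to meridian), and Property (4) of Proposition~\ref{prop:Efacts} giving $H_1(M_K;\Z) \to H_1(E;\Z)$ an isomorphism. Thus the Mayer--Vietoris map $H_1(M_K) \to H_1(V) \oplus H_1(E)$ is an injection of $\Z$ into $\Z \oplus \Z$ via the (anti)diagonal, and a short exact sequence chase shows that $H_1(W;\Z) \cong \Z$ and that $H_1(M_K;\Z) \to H_1(W;\Z)$ is an isomorphism. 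Claim (4) then follows because $H_1(M_J;\Z) \to H_1(W;\Z)$ factors as $H_1(M_J;\Z) \to H_1(E;\Z) \to H_1(W;\Z)$, and the first arrow is zero by Property (5) of Proposition~\ref{prop:Efacts}.

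For (5), I would continue the Mayer--Vietoris one degree higher. Since $V$ has the homology of $S^1$ we have $H_2(V;\Z) = 0$, and since the connecting map $H_2(W;\Z) \to H_1(M_K;\Z)$ lands in the kernel of the injective map above, the sequence collapses to give $H_2(W;\Z) \cong H_2(E;\Z) / \mathrm{im}(H_2(M_K;\Z) \to H_2(E;\Z))$. Property (6) of Proposition~\ref{prop:Efacts} says this image vanishes, so $H_2(W;\Z) \cong H_2(E;\Z)$, and then Property (7) says exactly that the image of $H_2(\partial^+E;\Z) = H_2(M_J;\Z)$ surjects onto $H_2(E;\Z) \cong H_2(W;\Z)$, which is (5).

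No step is really an obstacle: all the topology has already been packaged into Proposition~\ref{prop:Efacts}, and the remaining work is a bookkeeping exercise with one long exact sequence. The only mild care required is in verifying that $H_1(M_K;\Z) \to H_1(V;\Z)$ is the expected isomorphism (which is classical for slice disk exteriors, e.g.\ by Alexander duality in $B^4$) so that the diagonal argument in the $H_1$ computation goes through; everything else is a direct consequence of the seven properties already established for $E$.
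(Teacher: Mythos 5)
Your proposal is correct and follows exactly the paper's route: the paper defines $W=(B^4-\Delta)\cup_{M_K}E$ and asserts the lemma follows ``by Mayer--Vietoris and Proposition~\ref{prop:Efacts},'' which is precisely the computation you carry out in detail (your use of $H_1(M_K)\cong H_1(V)$ and $H_2(V)=0$ for the slice disk exterior, plus Properties (3)--(7) of Proposition~\ref{prop:Efacts}, is the intended argument). There is nothing to add.
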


We cull the important properties of the $4$-manifold $W$ into the following definition.

\begin{defn}
\label{defn:nullbordism} A compact, connected oriented topological 4-manifold $W$ with $\partial W = M$ is a \textbf{ null-bordism} for $M$  if
$$
H_2(W;\mathbb{Z})/i_*(H_2(\partial W;\mathbb{Z}))=0;
$$
and, for some integer $m\geq 0$, the map
$$
\pi_1(M)\to \pi_1(W)\to \pi_1(W)/\pi_1(W)^{(m+1)}_r
$$
is not the zero map. Moreover, if $n\geq 0$ is the \emph{minimal} integer such that $M$ that is null-bordant via $W$ then we say that $M$ is \textbf{null-bordant via $W$ at level $n$}. To any such null-bordism, there is associated a non-trivial epimorphism $f:\pi_1(M)\to A\cong\Z^k$ defined as follows. It follows from minimality that $j_*(\pi_1(M))\subset\pi_1(W)^{(n)}_r$ and $j_*(\pi_1(M))\nsubseteq\pi_1(W)^{(n+1)}_r$. Consequently the restriction
$$
\phi:\pi_1(M)\to \pi_1(W)/\pi_1(W)^{(n+1)}_r
$$
is non-trivial and factors through the abelianization of $\pi_1(M)$. Let $f:\pi_1(M)\to \text{image}(\phi)\cong\Z^k$ be the induced abelian representation (considered only up to post-composition with an isomorphism unless $H_1(M)$ has a natural basis). Then we say that \textbf{$(M,f)$ is null-bordant via $W$ at level $n$}. If $M=M_J$ then we say that $(J,f)$ is \textbf{null-bordant} via $W$.
\end{defn}

 For example, recall that if a link $J$ is a slice link with slice disk $\Delta$ then $M_J=\partial W$ where $W=B^4-\Delta$ and $H_2(W)=0$ and $H_1(M_J)\cong H_1(W)$ so $M_J$ is null-bordant via $W$ at level $m=0$. In this case the associated map $f$ is  just the abelianization.

However, the example that motivated Definition~\ref{defn:nullbordism} is really that provided by Lemma~\ref{lem:Wfacts}. Let us formalize this.

\begin{prop}\label{prop:derivisnullbord} Suppose that $K$ is a slice knot whose Alexander polynomial is not $1$, $P_\Delta$ is the Lagrangian associated to a slice disk $\Delta$ as in ~\ref{eq:defP}, and $(J,f)=\frac{\partial K}{\partial\mathfrak{m}}$ where $\mathfrak{m}$ represents $P_\Delta$. Then $(M_J,f)$ is null-bordant at level $1$ (via the $W$ constructed in Lemma~\ref{lem:Wfacts}).
\end{prop}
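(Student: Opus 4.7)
The plan is to verify each of the three clauses of Definition~\ref{defn:nullbordism} for the manifold $W = V \cup_{M_K} E$ of Lemma~\ref{lem:Wfacts}. The $H_2$ condition $H_2(W;\Z)/i_*(H_2(\partial W;\Z))=0$ is exactly Lemma~\ref{lem:Wfacts}(5). For the level, I first rule out level $0$: since $H_1(W;\Z)\cong\Z$ is torsion-free, $\pi_1(W)/\pi_1(W)^{(1)}_r\cong\Z$, and the map $\pi_1(M_J)\to\Z$ factors through $H_1(M_J)\to H_1(W)$, which is zero by Lemma~\ref{lem:Wfacts}(4). So the level is at least $1$.

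To prove the level is at most $1$, fix a meridian $\mu_i$ of $J_i$ in $M_J$. By Lemma~\ref{lem:Wfacts}(2), $\mu_i$ is freely homotopic in $W$ to the meridian $\alpha_i$ of the band of $\Sigma \subset M_K$ containing $J_i$, regarded as an element of $\pi_1(M_K) \subset \pi_1(V) \subset \pi_1(W)$. It therefore suffices to produce one $i$ with $[\alpha_i] \notin \pi_1(W)^{(2)}_r$. The central computation is the identification $H_1(W;\Q[t,t^{-1}]) \cong \mathcal{A}_0(K)/P_\Delta$, where $t$ corresponds to the meridian of $K$ generating $H_1(W)$. Applying Mayer-Vietoris to $W = V \cup_{M_K} E$: $H_1(V;\Q[t,t^{-1}]) \cong \mathcal{A}_0(K)/P_\Delta$ by the defining property of $P_\Delta$ in~\ref{eq:defP}; meanwhile $H_1(E;\Q[t,t^{-1}])$ is obtained from $\mathcal{A}_0(K)$ by killing the classes $\{[J_i]\}$, and these span $P_\Delta$ as a $\Q$-vector space (hence generate $P_\Delta$ as a $\Q[t,t^{-1}]$-submodule, since $P_\Delta$ is finite-dimensional over $\Q$), giving $H_1(E;\Q[t,t^{-1}]) \cong \mathcal{A}_0(K)/P_\Delta$ as well. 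A short chase---noting that each $H_0$ term is $\cong\Q$ with injective diagonal---yields $H_1(W;\Q[t,t^{-1}]) \cong \mathcal{A}_0(K)/P_\Delta$, with the map induced by $M_K \hookrightarrow V \hookrightarrow W$ being the natural quotient. Proposition~\ref{prop:lagrangprops}(2) then says $\{\phi(\alpha_1),\ldots,\phi(\alpha_g)\}$ spans $\mathcal{A}_0(K)/P_\Delta$, which is non-zero since $\Delta_K(t)\neq 1$; hence some $\alpha_i$ has non-zero image in $H_1(W;\Q[t,t^{-1}])$.

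To upgrade this to non-triviality in $\pi_1(W)/\pi_1(W)^{(2)}_r$, I use that $\pi_1(W)^{(1)}_r = \pi_1(W)^{(1)}$ (since $H_1(W)=\Z$ is torsion-free) and $\pi_1(W)^{(1)}/\pi_1(W)^{(2)} \cong H_1(W;\Z[t,t^{-1}])$; the definition of the rational derived series then makes $\pi_1(W)^{(1)}_r/\pi_1(W)^{(2)}_r$ the torsion-free quotient of this integral Alexander module, so it embeds in $H_1(W;\Q[t,t^{-1}])$. Thus $[\alpha_i] \notin \pi_1(W)^{(2)}_r$, giving level exactly $1$. For the associated $f$, the null-bordism construction sends $\mu_i$ to the class of $\alpha_i$ in $\pi_1(W)^{(1)}_r/\pi_1(W)^{(2)}_r$, which under the above embedding into $\mathcal{A}_0(K)/P_\Delta$ becomes $\phi(\alpha_i) \bmod P_\Delta$; the canonical derivative $f$ sends $\mu_i \mapsto \phi(\alpha_i) \in \mathcal{A}_0(K)$. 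Both have image the $\Z$-span of the $\phi(\alpha_i)$ under the appropriate identification, and since the canonical $f$ is defined only up to post-composition with an isomorphism, the two coincide.

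The main subtlety I anticipate is the careful handling of the twisted Mayer-Vietoris calculation and the embedding $\pi_1(W)^{(1)}_r/\pi_1(W)^{(2)}_r \hookrightarrow H_1(W;\Q[t,t^{-1}])$, both of which require keeping straight the distinction between the integral and rational derived series.
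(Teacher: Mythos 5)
Your overall strategy is sound and runs parallel to the paper's: rule out level $0$ using Lemma~\ref{lem:Wfacts}(4), transport meridians of $J$ to the band meridians $\alpha_i$ via Lemma~\ref{lem:Wfacts}(2), detect $\alpha_i$ outside $\pi_1(W)^{(2)}_r$ through the rational Alexander module (your embedding of $\pi_1(W)^{(1)}/\pi_1(W)^{(2)}_r$ into $H_1(W;\Q[t,t^{-1}])$ is exactly the paper's equations~\ref{eq:alex}--\ref{eq:inj} applied to $W$), and invoke Proposition~\ref{prop:lagrangprops} plus $\Delta_K\neq 1$. But there is one genuine gap: you assert $H_1(V;\Q[t,t^{-1}])\cong\mathcal{A}_0(K)/P_\Delta$ ``by the defining property of $P_\Delta$.'' Equation~\ref{eq:defP} only says that $P_\Delta$ is the \emph{kernel} of $\mathcal{A}_0(K)\to H_1(V;\Q[t,t^{-1}])$; it gives $\mathcal{A}_0(K)/P_\Delta\cong\mathrm{image}(j_*)$, not an isomorphism onto $H_1(V;\Q[t,t^{-1}])$. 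Surjectivity of the inclusion-induced map for an arbitrary slice disk exterior is a nontrivial assertion (it is easy for ribbon disk exteriors, where $\pi_1(M_K)\to\pi_1(V)$ is onto, but for a general slice disk it amounts to the vanishing of $H_1(V,M_K;\Q[t,t^{-1}])$, which requires a duality argument you do not give and which the paper nowhere uses). Note that the paper is careful to write only $\mathcal{A}_0(K)/P\cong\mathrm{image}(j_*)\subset H_1(V;\Q[t,t^{-1}])$, precisely sidestepping this issue.

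Fortunately the gap is local and repairable inside your own Mayer--Vietoris framework. Your computation of $H_1(E;\Q[t,t^{-1}])$ is correct and shows that $i_E:\mathcal{A}_0(K)\to H_1(E;\Q[t,t^{-1}])$ is \emph{surjective} with kernel exactly $P_\Delta$ (the attaching circles generate the submodule $P_\Delta$). Since the $H_0$ terms inject, the Mayer--Vietoris sequence exhibits $H_1(W;\Q[t,t^{-1}])$ as the pushout of $H_1(V;\Q[t,t^{-1}])\leftarrow\mathcal{A}_0(K)\twoheadrightarrow\mathcal{A}_0(K)/P_\Delta$, which is $H_1(V;\Q[t,t^{-1}])/j_*(P_\Delta)=H_1(V;\Q[t,t^{-1}])$ because $P_\Delta$ is the kernel of $j_*$. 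Thus $H_1(V;\Q[t,t^{-1}])\to H_1(W;\Q[t,t^{-1}])$ is an isomorphism and the composite from $\mathcal{A}_0(K)$ has kernel exactly $P_\Delta$, which is all your argument needs; no surjectivity claim about $V$ is required. This repaired version is essentially the rationalized shadow of the paper's own key step, which works one level deeper: the paper shows the $2$-handle curves $a_i$ lie in $\pi_1(V)^{(2)}_r$ (using injectivity of $\pi_1(V)^{(1)}/\pi_1(V)^{(2)}_r$ into its rationalization), hence $\pi_1(V)/\pi_1(V)^{(2)}_r\cong\pi_1(W)/\pi_1(W)^{(2)}_r$, and then reads off nontriviality and the identification of $\tilde f$ with $f$ inside $H_1(V;\Q[t,t^{-1}])$. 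Your final identification of the induced map with $f$ ``up to post-composition with an isomorphism'' is at the same level of detail as the paper's and is acceptable once the module computation is corrected as above.
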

\begin{proof} Applying properties $1$ and $5$ of Lemma~\ref{lem:Wfacts}, we see that we need only show that the composition
$$
\phi:\pi_1(M_J)\overset{j_*}{\to} \pi_1(W)\to \pi_1(W)/\pi_1(W)^{(2)}_r
$$
is not the zero map, and that $\tilde{f}\equiv\phi:\pi_1(M_J)\to \text{image}(\phi)$ is in fact identifiable to $f$. By property $2$ of Lemma~\ref{lem:Wfacts}, the meridians of the components of $J$ in $\pi_1(M_J)=\partial W$ are freely homotopic to the meridians, $\alpha_i$, of the bands of a Seifert surface for $K$ in $M_K= \partial(B^4-\Delta)\subset W$. Thus $\pi_1(M_J)$ maps under inclusion into the commutator subgroup $\pi_1(W)^{(1)}$ and hence $\phi$ and $\tilde{f}$ factor through the abelianization $H_1(M_J)\cong \mathbb{Z}^{g}$ and the image of $\tilde{f}$ is the subgroup generated by the images of $\{\alpha_1,...,\alpha_g\}$. Therefore $\tilde{f}$ is equivalent to the map
$$
\langle\alpha_1,...,\alpha_g\rangle\subset \pi_1(M_K)^{(1)} \overset{j_*}{\to} \pi_1(V)^{(1)}/\pi_1(V)^{(2)}_r\to \pi_1(W)^{(1)}/\pi_1(W)^{(2)}_r
$$
since $M_K\hookrightarrow W$ factors through $V$.
Now consider the following commutative diagram, which we now proceed to justify.
$$
\begin{diagram}\label{diagram:f}\dgARROWLENGTH=1.2em
\node{\{a_1,...,a_g\}\cup\{\alpha_1,...,\alpha_g\}} \arrow{e,t}{j_{\ast}}
\arrow{s,r}{} \arrow{s,l}{} \node{\pi_1(V)^{(1)}/\pi_1(V)^{(2)}_r} \arrow{e,t}{\cong}
\arrow{s,r}{i}\node{\pi_1(W)^{(1)}/\pi_1(W)^{(2)}_r}\\
\node{\mathcal{A}_0(K)}\arrow{e,t}{j_*}\node{H_1(V;\Q[t,t^{-1}])}\arrow{e,t}{\cong}\node{\pi_1(V)^{(1)}/\pi_1(V)^{(2)}_r\otimes \Q}
\end{diagram}
$$
 Recall that by construction $W$ is obtained from $V$ by adding 2-handles along the curves $\{a_1,...,a_g\}$, which form a basis of $\mathfrak{m}$ which in turn spans $P$. Since by Equation~(\ref{eq:defP}), $P$ is the kernel of the map $j_*$ on the bottom row of the diagram, the elements $\{j_*(a_1),...,j_*(a_g)\}$ are zero in $H_1(V;\Q[t,t^{-1}])$. But also recall that
\begin{equation}\label{eq:alex}
H_1(V;\Q[t,t^{-1}])=H_1(V;\Z[\pi_1(V)/\pi_1(V)^{(1)}])\otimes \Q[t,t^{-1}]\cong \pi_1(V)^{(1)}/\pi_1(V)^{(2)}_r\otimes \Q.
\end{equation}
Since
\begin{equation}\label{eq:inj}
\pi_1(V)^{(1)}/\pi_1(V)^{(2)}_r\hookrightarrow \pi_1(V)^{(1)}/\pi_1(V)^{(2)}_r\otimes \Q
\end{equation}
is injective, ~\ref{eq:alex} and \ref{eq:inj} combine to show that $\{j_*(a_1),...,j_*(a_g)\}\subset\pi_1(V)^{(2)}_r$ (where this $j_*$ is from the top row of the diagram). Thus
the inclusion $V\hookrightarrow W$ induces an isomorphism
$$
\pi_1(V)/\pi_1(V)^{(2)}_r\cong \pi_1(W)/\pi_1(W)^{(2)}_r
$$
and hence (since $H_1(W)\cong H_1(V)\cong \Z$ by property $3$ of Lemma~\ref{lem:Wfacts}) an isomorphism
$$
\pi_1(V)^{(1)}/\pi_1(V)^{(2)}_r\cong \pi_1(W)^{(1)}/\pi_1(W)^{(2)}_r.
$$
Therefore $\tilde{f}$, up to isomorphism, is equivalent to the map
$$
\langle\alpha_1,...,\alpha_g\rangle\subset \pi_1(M_K)^{(1)} \overset{j_*}{\to} \pi_1(V)^{(1)}/\pi_1(V)^{(2)}_r.
$$
So we need only understand the subgroup of
$$
\pi_1(V)^{(1)}/\pi_1(V)^{(2)}_r
$$
that is spanned by $\{j_*(\alpha_1),...,j_*(\alpha_g)\}$. Moreover by equations ~\ref{eq:alex} and \ref{eq:inj} it suffices to consider these curves in $H_1(V;\Q[t,t^{-1}]))$. Since
$$
\mathcal{A}_0(K)/P\cong \text{image}(j_*)\subset H_1(V;\Q[t,t^{-1}])
$$
up to isomorphism it suffices to consider the subgroup of $\mathcal{A}_0(K)/P$ generated by $\{\alpha_1,...,\alpha_g\}$.

On the other hand, recall that the definition of the map $f$ associated to $\frac{\partial K}{\partial\mathfrak{m}}$ is given by the composition
$$
\pi_1(M_J)\twoheadrightarrow H_1(M_J)\cong \langle\alpha_1,...,\alpha_g\rangle\to \mathcal{A}_0(K)/P.
$$
Hence $\tilde{f}$ and $f$ are identical up to isomorphism. Moreover, by Proposition~\ref{prop:lagrangprops}, $\{\alpha_1,...,\alpha_g\}$ spans $\mathcal{A}_0(K)/P$.  Since the Alexander polynomial is not $1$, $\mathcal{A}_0(K)/P$ is non-trivial.  Thus $\tilde{f}$ is non-trivial.
This concludes the verification that $W$ is a null-bordism for $(M_J,f)$ at level $1$.
\end{proof}

\subsection{Zero-th order signatures vanish for null-bordant knots and links}\

\

We will show that all null-bordant knots and, under some restrictions, null-bordant links have vanishing zero-th order signatures. More precisely, if $(J,f)$ is null-bordant via $W$ then there corresponds a particular zero-th order signature defined as $\rho^f_0(J)=\rho(M_J,f)$. We claim:

\begin{thm}\label{thm:nbimpliesrho=0} If the $c$ component link $(J,f)$ is null-bordant via $W$ then
$$
|\rho^f_0(J)|\leq c-1-\eta(J,f).
$$
\end{thm}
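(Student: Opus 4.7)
The plan is to apply the $L^{(2)}$-signature defect formula to the null-bordism $W$ and to bound the two resulting signature terms using the null-bordism hypothesis. Unpacking Definition~\ref{defn:nullbordism}, $W$ carries a PTFA quotient $\Gamma\defeq\pi_1(W)/\pi_1(W)^{(n+1)}_r$ with a representation $\psi:\pi_1(W)\to\Gamma$ whose restriction $\phi$ to $\pi_1(M_J)$ factors through $f:\pi_1(M_J)\to A\cong\Z^k$ (where $A\subseteq\Gamma$ is the image of $\phi$). Property~(1) of $\rho$-invariants in Section~\ref{sec:notation} gives $\rho^f_0(J)=\rho(M_J,\phi)$, and since $\phi$ extends to $\psi$ over $W$, the defect formula yields
$$\rho^f_0(J)=\sigma^{(2)}_\Gamma(W,\psi)-\sigma(W).$$

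First I would show $\sigma(W)=0$. The ordinary intersection form on $H_2(W;\Q)$ vanishes on classes coming from $\partial W$, since any two such classes can be pushed into disjoint collar neighborhoods of the boundary; by the null-bordism hypothesis such boundary classes span all of $H_2(W;\Q)$, so the form is identically zero and $\sigma(W)=0$. Consequently $\rho^f_0(J)=\sigma^{(2)}_\Gamma(W,\psi)$.

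It remains to bound $|\sigma^{(2)}_\Gamma(W,\psi)|$ by $c-1-\eta(J,f)$. The same collar argument shows that the image of $H_2(\partial W;\NN\Gamma)$ lies in the radical of the $L^{(2)}$-intersection pairing on $H_2(W;\NN\Gamma)$, so $|\sigma^{(2)}_\Gamma(W,\psi)|$ is bounded by the $\NN\Gamma$-rank of the image of $H_2(W;\NN\Gamma)\to H_2(W,\partial W;\NN\Gamma)$. Combining Poincar\'e-Lefschetz duality $H_2(W,\partial W;\NN\Gamma)\cong\overline{H_2(W;\NN\Gamma)}$ with the $L^{(2)}$-long exact sequence of $(W,\partial W)$, this rank equals $\dim_{\NN\Gamma}\mathrm{Im}\bigl(H_2(W,\partial W;\NN\Gamma)\to H_1(\partial W;\NN\Gamma)\bigr)$. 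Because $\phi$ factors through the abelian subgroup $A\subseteq\Gamma$, the standard $L^{(2)}$-dimension comparison under subgroup inclusion identifies $\dim_{\NN\Gamma}H_1(\partial W;\NN\Gamma)=\dim_{\NN A}H_1(M_J;\NN A)=\eta(J,f)$, carving $\eta(J,f)$ out of the available dimension budget. The residual bound by $c-1$ should come from combining $\mathrm{rank}_\Z H_2(W;\Z)\le c$ (a consequence of $H_2(M_J;\Z)\cong\Z^c$ together with the null-bordism condition) with the observation that the sum of the classes generating $H_2(M_J;\Z)$ vanishes in the relevant abelian quotient, accounting for the ``$-1$''.

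The hardest step will be translating the integer-coefficient bound on $H_2(W;\Z)$ into the corresponding $\NN\Gamma$-coefficient rank estimate on $H_2(W;\NN\Gamma)$ that is needed to complete the $L^{(2)}$-signature bound. This delicate comparison---chasing how the integral surjection $H_2(\partial W;\Z)\twoheadrightarrow H_2(W;\Z)$ propagates through the $L^{(2)}$-long exact sequence while simultaneously tracking the diagonal-class relation responsible for the ``$-1$''---is the technical crux of the argument.
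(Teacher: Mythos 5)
Your overall skeleton is the same as the paper's: pass to the PTFA quotient $\G=\pi_1(W)/\pi_1(W)^{(n+1)}_r$, interpret $\rho^f_0(J)$ as a signature defect of $(W,\psi)$, and identify the twisted rank term with $\eta(J,f)$ by observing that the restriction of $\psi$ to $\pi_1(M_J)$ factors through an embedding of $\Z^k$ into $\G$ (the paper does this via flatness of $\Z\G$ over $\Z[\Z^k]$). Your observations that $\sigma(W)=0$ and that boundary classes lie in the radical of the intersection form are also fine. But there is a genuine gap at exactly the point you defer to the end: the inequality $|\sigma^{(2)}_\G(W,\psi)|\leq \beta_1(M_J)-1-\operatorname{rank}_{\Z\G}H_1(M_J;\Z\G)$ is the entire content of the theorem, and your route to it does not work as sketched. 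The null-bordism hypothesis is a purely \emph{integral} statement, $H_2(W;\Z)/i_*(H_2(\partial W;\Z))=0$, and it gives you $\operatorname{rank}_{\Z}H_2(W;\Z)\leq c$; it does \emph{not} give surjectivity of $H_2(\partial W;\NN\G)\to H_2(W;\NN\G)$, nor any a priori bound on $\dim_{\NN\G}$ of the image of $H_2(W;\NN\G)\to H_2(W,\partial W;\NN\G)$, which is where the $L^{(2)}$-signature actually lives. Twisted second homology of $W$ can be much larger than any integral bound, so ``translating the integer-coefficient bound into the $\NN\G$-coefficient rank estimate'' is not a delicate bookkeeping step to be filled in later; it is the theorem. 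In this paper that step is precisely Theorem~\ref{thm:CHLsigvanishes} (\cite[Theorem 5.9, Remark 5.11]{CHL3}), which the authors quote and apply; its proof uses duality and rank comparisons between twisted and untwisted homology that your outline neither reproduces nor cites. As written, your proposal reduces Theorem~\ref{thm:nbimpliesrho=0} to an unproved statement equivalent to it.

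A secondary point: your heuristic for the constants is off. The ``$-1$'' in $c-1-\eta(J,f)$ comes from $\beta_0(W)=1$ in the bound $\beta_1(\partial W)-\beta_0(W)-\operatorname{rank}_{\Z\G}H_1(\partial W;\Z\G)$ (equivalently, from the fact that the coefficient system is nontrivial on the boundary), not from a relation among the generators of $H_2(M_J;\Z)$ in an abelian quotient; and the $c$ enters as $\beta_1(M_J)$, not via $\operatorname{rank}_\Z H_2(W;\Z)\leq c$. If you want a self-contained argument, you would need to carry out the rank count of \cite[Theorem 5.9]{CHL3} (or cite it, as the paper does), after which your flatness/subgroup-inclusion identification of $\operatorname{rank}_{\Z\G}H_1(M_J;\Z\G)$ with $\eta(J,f)$ completes the proof exactly as in the text.
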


For a knot, there is only one non-trivial zero-th order signature so $\rho^f_0(J)=\rho_0(J)$. Moreover $\eta(J,f)=0=c-1$ by ~\cite[Lemma 2.11]{COT}. Thus Theorem~\ref{thm:nbimpliesrho=0} takes a very simple form for knots.

\begin{cor}\label{cor:nbimpliesrho=0} If $J$ is a knot that is null-bordant  then $\rho_0(J)=0$.
\end{cor}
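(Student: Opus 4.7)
The plan is to realize $\rho^f_0(J)$ as a signature defect of the null-bordism $W$ and bound each contribution.

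First, I would set up coefficient systems. Let $n \geq 0$ be the level at which $(M_J, f)$ is null-bordant via $W$ and set $\G := \pi_1(W)/\pi_1(W)^{(n+1)}_r$, which is PTFA. The induced map $\psi: \pi_1(W) \to \G$ restricts on $\pi_1(M_J)$ to $\phi$ whose image is precisely $A$ by the very definition of null-bordism, so $\phi$ factors through $f$. Property~(1) of $\rho$-invariants then gives $\rho^f_0(J) = \rho(M_J, f) = \rho(M_J, \phi)$, and since $(M_J, \phi) = \partial(W, \psi)$, the signature-defect formula of L\"uck--Schick yields
$$
\rho^f_0(J) = \sigma^{(2)}_\G(W, \psi) - \sigma(W).
$$

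Next, I would show $\sigma(W) = 0$. By the null-bordism hypothesis $H_2(W;\Z) = i_*H_2(\partial W;\Z)$, every class in $H_2(W;\Q)$ is represented by a surface pushed into $\partial W$; two such boundary-supported surfaces can be isotoped disjoint, so the ordinary intersection form on $H_2(W;\Q)$ is identically zero and $\sigma(W) = 0$.

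The substantive step is to bound $|\sigma^{(2)}_\G(W, \psi)| \leq c - 1 - \eta(J, f)$. Passing to coefficients in the Ore skew field $\KK\G$ of fractions of $\Q\G$ (which exists because $\G$ is PTFA), the $L^{(2)}$-signature equals the ordinary signature of the Hermitian intersection form on $H_2(W;\KK\G)$ modulo its radical. Standard Poincar\'e--Lefschetz duality arguments identify this radical with the image of $H_2(\partial W;\KK\G)\to H_2(W;\KK\G)$, so
$$
|\sigma^{(2)}_\G(W, \psi)| \leq \dim_{\KK\G}\text{coker}\bigl(H_2(\partial W;\KK\G) \to H_2(W;\KK\G)\bigr).
$$
The bound then follows by combining the $\KK\G$-coefficient long exact sequence of the pair $(W, \partial W)$ with the flatness $\KK A \hookrightarrow \KK\G$ (which allows boundary computations to be reduced to $\KK A$-coefficients), the standard identities $\dim_{\KK A} H_1(M_J;\KK A) = \eta(J, f)$ and $\dim_\Z H_2(M_J;\Z) = c$, and the null-bordism hypothesis on $\Z$-coefficients, which constrains how much $H_2(W;\KK\G)$ can exceed its boundary image.

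The hard part will be that final dimension count, specifically the translation of $\Z$-coefficient information into $\KK\G$-coefficient bounds and the extraction of the ``$c - 1$'' term. This is analogous in spirit to the argument behind Proposition~\ref{prop:firstrho=rhozeroofder} and uses the cobordism-bounding techniques developed in~\cite{CHL1A,CHL3}. In the knot case $c = 1$, $\eta = 0$ the bound reduces to $\rho_0(J) = 0$, which is cleaner to see: all $\KK\G$-homology of $M_J$ vanishes in degrees $1$ and $2$, the image from the boundary in $H_2(W;\KK\G)$ is therefore trivial, and the null-bordism hypothesis (via comparison of $\Z$- and $\KK\G$-coefficient long exact sequences) forces $H_2(W;\KK\G) = 0$.
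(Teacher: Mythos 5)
Your overall strategy --- realize $\rho^f_0(J)$ as the signature defect $\sigma^{(2)}_{\G}(W,\psi)-\sigma(W)$ of the null-bordism, kill $\sigma(W)$ using $H_2(W;\Z)=i_*H_2(\partial W;\Z)$, and bound the $L^{(2)}$-term by the rank of the $\KK\G$-valued intersection form modulo its radical --- is exactly the route the paper takes, except that the paper does not re-derive the decisive bound: it quotes it as Theorem~\ref{thm:CHLsigvanishes} (that is, \cite[Thm.\ 5.9]{CHL3}) and, for the knot case of Corollary~\ref{cor:nbimpliesrho=0}, combines it with \cite[Lemma 2.11]{COT} to get $c-1-\eta(J,f)=0$. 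Your identification of the radical with $\text{im}\bigl(H_2(\partial W;\KK\G)\to H_2(W;\KK\G)\bigr)$ and your argument that $\sigma(W)=0$ are fine.

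The gap is precisely the step you label ``the hard part.'' In the knot case --- which is all the Corollary needs --- you assert that the null-bordism hypothesis, ``via comparison of $\Z$- and $\KK\G$-coefficient long exact sequences,'' forces $H_2(W;\KK\G)=0$. The conclusion is true, but it does not follow from what you have written: there is no comparison map from $\Z$-coefficient to $\KK\G$-coefficient homology doing this work (an inequality $\dim_{\KK\G}H_2\leq \text{rank}_{\Z}H_2$ is false in general), and the definition of null-bordism places no constraint whatsoever on $\beta_1(W)$ or $\beta_3(W)$, so nothing in your sketch rules out $\KK\G$-homology in degree two carrying a nonsingular form of positive rank. What actually closes the argument is the combination of: (i) $\dim_{\KK\G}H_1(W;\KK\G)\leq\beta_1(W)-1$ for the nontrivial PTFA coefficient system on $W$ itself (\cite[Prop.\ 2.11]{COT} applied to $W$, not just to $M_J$); (ii) Poincar\'e duality and the universal coefficient theorem over the skew field, which, together with $H_i(M_J;\KK\G)=0$ for $i=0,1$, give $\dim_{\KK\G}H_3(W;\KK\G)=\dim_{\KK\G}H_1(W;\KK\G)$ and $H_0=H_4=0$; and (iii) equality of the $\Q$- and $\KK\G$-Euler characteristics, so that $\dim_{\KK\G}H_2(W;\KK\G)=\chi(W)+2\dim_{\KK\G}H_1(W;\KK\G)$, while the null-bordism condition makes the rational intersection form identically zero and hence $\chi(W)=2-2\beta_1(W)$. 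Only then does one get $\dim_{\KK\G}H_2(W;\KK\G)\leq 0$; the same bookkeeping with $\beta_1(M_J)$ and $\dim_{\KK\G}H_1(M_J;\KK\G)$ reinstated is exactly the content of Theorem~\ref{thm:CHLsigvanishes}, so the cleanest repair is to cite that theorem (as the paper does) or to supply the items (i)--(iii) explicitly; as written, the central inequality is asserted rather than proved, both in the general bound and in the knot case.
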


\textbf{Open Problem}: If $J$ is a knot that is null-bordant, is $J$ necessarily of finite order in Levine's algebraic concordance group? We remark that, since the figure-eight knot is slice in a $\Q$-homology $4$-ball $W$ with $H_1(W)\cong\Z$, one sees that it is null-bordant, yet not zero in Levine's group.

\begin{proof}[Proof of Theorem~\ref{thm:nbimpliesrho=0}] Let $\G=\pi_1(W)/\pi_1(W)^{(n+1)}_r$ ($n$ minimal) and let $\tilde{\phi}$ be the canonical quotient map which is non-trivial by hypothesis. We claim that $\rho(M_J,\tilde{\phi})=0$. We deduce this from the following special case of a previous result of the authors.

\begin{thm}\label{thm:CHLsigvanishes}(Cochran-Harvey-Leidy)~\cite[Theorem 5.9, Remark 5.11]{CHL3} Suppose $W$ is a  null-bordism and
$\tilde{\phi}:\pi_1(W)\ra\G$ is a coefficient system where
$\G$ is a PTFA group. If the restriction of $\tilde{\phi}$ to each component of $\partial W$ is non-trivial on $\pi_1$ then
$$
|\rho(\partial W,\tilde{\phi})|\leq \beta_1(\partial W)-\beta_0(W)-\text{rank}_{\mathbb{Z}\G}H_1(\partial W;\mathbb{Z}\G) .
$$
\end{thm}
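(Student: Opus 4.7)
The strategy is to apply the fundamental identity
$$
\rho(\partial W, \tilde\phi) \;=\; \sigma^{(2)}_\G(W, \tilde\phi) \;-\; \sigma(W)
$$
and control each term separately, exploiting both the null-bordism hypothesis (for the ordinary signature) and the non-triviality of $\tilde\phi$ on the boundary (for the $L^{(2)}$-signature). The proof will pass to the Ore localization $\KK$ of $\Z\G$, which exists and is a skew field because $\G$ is PTFA.

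First, I would show that $\sigma(W) = 0$. The null-bordism hypothesis $H_2(W;\Z)/i_\ast H_2(\partial W;\Z) = 0$ implies that $H_2(W;\Q)$ is spanned by boundary image classes. But such classes always lie in the radical of the ordinary intersection form: Poincar\'e--Lefschetz duality identifies the form with the composition $H_2(W;\Q) \to H_2(W,\partial W;\Q) \cong H^2(W;\Q)$, and the first map kills exactly $i_\ast H_2(\partial W;\Q)$. Hence the form vanishes identically, and $\sigma(W)=0$.

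Second, I would bound $|\sigma^{(2)}_\G(W,\tilde\phi)|$ by the same ``radical'' trick applied to the equivariant intersection form on $H_2(W;\KK)$. Since $\mathrm{im}(H_2(\partial W;\KK) \to H_2(W;\KK))$ sits in the radical of this Hermitian form,
$$
|\sigma^{(2)}_\G(W)| \;\leq\; \dim_\KK H_2(W;\KK) - \dim_\KK \mathrm{im}\bigl(H_2(\partial W;\KK)\to H_2(W;\KK)\bigr),
$$
and by exactness in the long exact sequence of $(W,\partial W)$ this equals $\dim_\KK \mathrm{im}(j_\ast\colon H_2(W;\KK)\to H_2(W,\partial W;\KK))$.

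Third, I would convert this $\mathrm{im}(j_\ast)$ estimate into the stated bound using an Euler-characteristic/long-exact-sequence accounting with $\KK$ coefficients. The hypothesis that $\tilde\phi$ is non-trivial on each component of $\partial W$ gives $H_0(\partial W;\KK) = H_3(\partial W;\KK) = 0$; combined with the vanishing of $\chi(\partial W;\KK)$ for a closed $3$-manifold, this yields $\dim_\KK H_2(\partial W;\KK) = \dim_\KK H_1(\partial W;\KK)$. Plugging this into the $(W,\partial W)$ exact sequence, invoking Poincar\'e--Lefschetz duality $H_k(W,\partial W;\KK) \cong H^{4-k}(W;\KK)$, and using $\chi(W;\KK) = \chi(W;\Q)$ (both computed from the same cell structure) together with the null-bordism hypothesis in rational homology to control $\beta_2(W;\Q)$ via $\beta_1(\partial W)$, one lands on
$$
\dim_\KK \mathrm{im}(j_\ast) \;\leq\; \beta_1(\partial W) - \beta_0(W) - \dim_\KK H_1(\partial W;\KK).
$$

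The principal obstacle is the bookkeeping in this last step, where one must carefully juggle ordinary homology (where the null-bordism hypothesis is stated) against $\KK$-twisted homology (where the $L^{(2)}$-signature lives) in order to land on the asymmetric combination of an ordinary Betti number $\beta_1(\partial W)$, an ordinary component count $\beta_0(W)$, and the $\KK$-rank of $H_1(\partial W;\Z\G)$. The appearance of $\beta_0(W)$ alongside a $\KK$-rank is the subtle feature: it arises precisely from comparing $\chi(W;\KK)$ (where $H_0$ vanishes by the non-triviality hypothesis) with $\chi(W;\Q)$ (where $H_0$ contributes $\beta_0(W)$), the two being equal as combinatorial invariants of the cell structure.
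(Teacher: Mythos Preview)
The paper does not give its own proof of this theorem: it is quoted verbatim from \cite[Theorem~5.9, Remark~5.11]{CHL3} and used as a black box. So there is no ``paper's proof'' against which to compare your attempt in detail.

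That said, your outline is the correct one and is the strategy used in \cite{CHL3} (and, in special cases, already in \cite{COT}). The three moves are exactly right: (i) the null-bordism hypothesis kills the ordinary intersection form, hence $\sigma(W)=0$; (ii) over the Ore localization $\KK$ of $\Z\G$ the radical of the equivariant form is $\ker(j_*)$, so $|\sigma^{(2)}_\G(W)|\le \dim_\KK \operatorname{im}(j_*)$; (iii) the remaining inequality is a dimension count in the long exact sequence of $(W,\partial W)$ with $\KK$-coefficients, using Poincar\'e--Lefschetz duality and comparison of Euler characteristics.

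One refinement: in step~(ii) you should note that the bound $|\sigma^{(2)}_\G(W)|\le \dim_\KK \operatorname{im}(j_*)$ is not completely formal, since the $L^{(2)}$-signature is defined via the group von~Neumann algebra $\NN\G$ rather than $\KK$. For PTFA $\G$ one uses that $\KK$ embeds in the algebra $\UU\G$ of affiliated operators and that the von~Neumann dimension of a finitely generated module agrees with its $\KK$-rank; this is the content of results of L\"uck, Reich, and Schick invoked in \cite{COT,CHL3}.

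Your identification of step~(iii) as the delicate part is accurate. The honest work in \cite{CHL3} is precisely the bookkeeping you describe: one must show that the null-bordism condition $H_2(W)/i_*H_2(\partial W)=0$, together with the vanishing $H_0(\partial W;\KK)=H_3(\partial W;\KK)=0$ forced by non-triviality of $\tilde\phi$ on each boundary component, suffices to bound $\dim_\KK\operatorname{im}(j_*)$ by the stated combination of ordinary and twisted Betti numbers. Your sketch of how the terms arise (the $\beta_0(W)$ from comparing $\chi(W;\Q)$ with $\chi(W;\KK)$, the $\beta_1(\partial W)$ from the rational long exact sequence using $j_*^\Q=0$) is on the right track, though the full execution requires a few more lines than you indicate.
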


\noindent For us $\partial W=M_J$. Since
$$
\Z^d=\text{image}(\tilde{\phi}_{|\pi_1(M_J)})\hookrightarrow \G
$$
is a monomorphism, $\mathbb{Z}\G$ is a free, hence a flat, $\Z[\Z^d]$-module. Thus
$$
H_1(M_J;\mathbb{Z}\G)\cong H_1(M_J;\Z[\Z^d])\otimes_{\Z[\Z^d]}\Z\G.
$$
The same fact holds for the respective quotient fields. Thus
\begin{equation}\label{eq:ranksame}
\text{rank}_{\mathbb{Z}\G}H_1(M_J;\mathbb{Z}\G)=\text{rank}_{\mathbb{Z}[\Z^d]}H_1(M_J;\phi)=\eta(M_J,f).
\end{equation}
Therefore we can apply Theorem~\ref{thm:CHLsigvanishes} to conclude that
$$
|\rho(M_J,\tilde{\phi})|\leq \beta_1(M_J)-1-\eta(J,f).
$$
Since the image of $\tilde{\phi}$ is abelian, $\rho(M_J,\tilde{\phi})$ is the zero-th order signature that we have denoted $\rho^f_0(J)$. This concludes the proof of Theorem~\ref{thm:nbimpliesrho=0}.
\end{proof}

\begin{proof}[Proof of Proposition~\ref{prop:firstrho=rhozeroofder}] If $\Delta_K=1$ then $P=0$ and $f=0$. Then the result is trivially true since both $\rho$-invariants are zero. Suppose $\Delta_K\neq 1$ and that $P$ is represented by $(J,f)=\frac{\partial K}{\partial \mathfrak{m} }$, a link of $c$ components. Consider the cobordism $E$ from $M_K$ to $M_J$ given by Proposition~\ref{prop:Efacts}. Let $G=\pi_1(M_K)$ and $\phi:G\to G/G^{(2)}P$ be the coefficient system corresponding to $P$. By Definition~\ref{defn:firstordersignatures}, the first-order signature of $K$ corresponding to $P$ is $\rho(M_K,\phi)$. Since the components of $J$ span $\mathfrak{m}$ which represents $P$, the components of $J\hookrightarrow S^3-K$ represent elements of $P$ by Definition~\ref{def:represents}. Thus, by $2$ of Proposition~\ref{prop:Efacts}, $\phi$ extends to $\pi_1(E)$. We claim that the restriction, $\tilde{\phi}$, to $\pi_1(M_J)$ of this extended $\phi$ is merely $f$ followed by an embedding. This was essentially already verified in the proof of Proposition~\ref{prop:derivisnullbord} (replacing $W$ by $E$ and ignoring $V$). Therefore
$$
\partial(E,\phi)=(M_K,\phi)~ \amalg~(-M_J,i\circ f).
$$
We may now apply Theorem~\ref{thm:CHLsigvanishes} to $(E,\phi)$ to conclude that
$$
|\rho(M_K,\phi)-\rho_0^f(J)|\leq \beta_1(M_J)-1-\eta(J,f)-\text{rank}H_1(M_K;\phi).
$$
Here we have used the same argument as for ~\ref{eq:ranksame} above to equate $\eta(J,f)$ with $\text{rank}H_1(M_J;\phi)$. Moreover, since $\beta_1(M_K)=1$, $\text{rank}_{\mathbb{Z}\G}H_1(M_K;\mathbb{Z}\G)=0$ for any non-trivial coefficient system by ~\cite[Lemma 2.11]{COT}. Thus
$$
|\rho(M_K,\phi)-\rho_0^f(J)|\leq c-1-\eta(J,f),
$$
as claimed.
\end{proof}

\subsection{First-order signatures vanish for null-bordant knots}\

\

We will also show that all null-bordant knots have vanishing first-order signatures. This can be made more precise. Recall that to each isotropic submodule, $P\subset\mathcal{A}_0(J)$ there corresponds a first-order signature, $\rho(M_J,\phi_P)$. We claim that each null-bordism $W$ induces a particular such isotropic submodule, $P_W$ (just like a slice disk exterior) .

\begin{lem}\label{lem:bordisminducesP} If $J$ is a knot that is null-bordant via $W$ then the inclusion $M_J\hookrightarrow W$ induces a an isotropic submodule, $P_W\subset\mathcal{A}_0(J)$.
\end{lem}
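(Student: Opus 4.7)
My plan is to first fix an appropriate PTFA coefficient system on $W$ that refines the standard abelian cover of $M_J$, and then define $P_W$ as the kernel of a change-of-coefficients analog of the classical definition of $P_\Delta$. Because $J$ is a knot, $H_1(M_J;\Z)\cong\Z$, so the representation $f$ associated with the null-bordism necessarily has $A\cong\Z$; thus $\Q[A]=\Q[t,t^{-1}]$ and $f$ is (up to isomorphism) the standard abelianization map. Let $\Gamma=\pi_1(W)/\pi_1(W)^{(n+1)}_r$ with quotient $\psi:\pi_1(W)\to\Gamma$, so $\Gamma$ is PTFA and $\psi\circ j_*=\iota\circ f$ for the inclusion $\iota:A\hookrightarrow\Gamma$. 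Since $A$ is a subgroup of the PTFA group $\Gamma$, the group ring $\Q\Gamma$ is free (hence faithfully flat) as a right $\Q[t,t^{-1}]$-module, and a standard change-of-rings argument identifies $H_1(M_J;\Q\Gamma)\cong\mathcal{A}_0(J)\otimes_{\Q[t,t^{-1}]}\Q\Gamma$. I then define
$$P_W=\bigl\{x\in\mathcal{A}_0(J)\,:\,x\otimes 1\in\ker\bigl(i_*:H_1(M_J;\Q\Gamma)\to H_1(W;\Q\Gamma)\bigr)\bigr\},$$
which is a $\Q[t,t^{-1}]$-submodule of $\mathcal{A}_0(J)$ because $\ker i_*$ is a $\Q\Gamma$-, hence $\Q[t,t^{-1}]$-, submodule of $\mathcal{A}_0(J)\otimes\Q\Gamma$, and because faithful flatness gives that $\mathcal{A}_0(J)\hookrightarrow\mathcal{A}_0(J)\otimes\Q\Gamma$ is injective.

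To verify that $P_W$ is isotropic with respect to $\mathcal{B}\ell_0^J$, I will adapt the argument of \cite[Thm.~4.4]{COT} showing that the submodule $P_\Delta$ associated to a slice disk is Lagrangian. In that argument the essential geometric inputs for a slice-disk exterior $V$ are $H_1(V;\Z)\cong\Z$ (which guarantees that the standard $\Z$-cover of $M_K$ extends over $V$) and $H_2(V;\Z)=0$ (which forces the relevant Poincar\'e--Lefschetz duality pairing to vanish on boundary classes that die in $V$). In our weaker setting these inputs are replaced, respectively, by the existence of the representation $\psi:\pi_1(W)\to\Gamma$ extending $\iota\circ f$, and by the null-bordism hypothesis $H_2(W;\Z)/i_*H_2(\partial W;\Z)=0$. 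Working throughout with $\Q\Gamma$-coefficients, I will use Poincar\'e--Lefschetz duality for $(W,\partial W)$ together with the long exact sequence of the pair to show that the higher-order Blanchfield pairing $\mathcal{B}\ell^{\Gamma}(x\otimes 1,y\otimes 1)\in\KK_\Gamma/\Q\Gamma$ vanishes for every $x,y\in P_W$, where $\KK_\Gamma$ denotes the (skew) field of fractions of the Ore domain $\Q\Gamma$.

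Finally, I will use naturality of the Blanchfield construction with respect to the ring extension $\Q[t,t^{-1}]\hookrightarrow\Q\Gamma$ to identify the image of $\mathcal{B}\ell_0^J(x,y)\in\Q(t)/\Q[t,t^{-1}]$ in $\KK_\Gamma/\Q\Gamma$ with $\mathcal{B}\ell^{\Gamma}(x\otimes 1,y\otimes 1)$, which has just been shown to be zero. Since the natural map $\Q(t)/\Q[t,t^{-1}]\hookrightarrow\KK_\Gamma/\Q\Gamma$ is injective (being the quotient of two injections of Ore localizations), this yields $\mathcal{B}\ell_0^J(x,y)=0$ in $\Q(t)/\Q[t,t^{-1}]$, establishing the claimed isotropy.

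I expect the main obstacle to be the commutative-diagram bookkeeping underlying the previous two paragraphs. One must verify the naturality of Bockstein, cap-product, and Poincar\'e--Lefschetz duality constructions under the change of coefficients $\Q[t,t^{-1}]\hookrightarrow\Q\Gamma$, and carefully check that $H_2(W;\Q\Gamma)$ is sufficiently controlled by $i_*H_2(\partial W;\Q\Gamma)$. The latter is where the hypothesis $H_2(W;\Z)/i_*H_2(\partial W;\Z)=0$ must be leveraged, most likely via a direct chain-level argument using a relative handle decomposition of $(W,\partial W)$, since a spectral-sequence argument would have to contend with the non-trivial $\Q\Gamma$-homology of $\partial W$. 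Once these diagrams and identifications are in place, the isotropy conclusion follows by the same formal sequence of steps as in the slice-disk case of \cite[Thm.~4.4]{COT}.
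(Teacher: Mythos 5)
Your setup coincides with the paper's own proof: you take the same coefficient system $\Lambda=\pi_1(W)/\pi_1(W)^{(n+1)}_r$ at the minimal level $n$, note that its restriction to $\pi_1(M_J)$ factors non-trivially through $\Z$ (so is essentially the abelianization, since $J$ is a knot), use flatness of $\Q\Lambda$ over $\Q[t,t^{-1}]$ to identify $H_1(M_J;\Q\Lambda)\cong\mathcal{A}_0(J)\otimes_{\Q[t,t^{-1}]}\Q\Lambda$, and define $P_W$ as the kernel of the composition into $H_1(W;\Q\Lambda)$ --- exactly the paper's definition. The divergence is at the isotropy step: the paper at this point simply invokes Theorem~\ref{thm:nontriviality} (that is, \cite[Theorem 6.6]{CHL3}), whereas you propose to reprove that statement by adapting the slice-disk argument of \cite[Theorem 4.4]{COT} with the null-bordism hypothesis substituted for $H_2(V;\Z)=0$.

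That substitution is asserted rather than argued, and it is precisely where the difficulty lies, so as written your proposal has a genuine gap. The hypothesis $H_2(W;\Z)/i_*H_2(\partial W;\Z)=0$ is an \emph{integral} condition; it does not formally control $H_2(W;\Q\Lambda)$, nor the torsion in $H_2(W,\partial W;\Q\Lambda)$ that the Poincar\'e--Lefschetz/Bockstein argument needs. Twisted second homology need not be carried by the boundary even when untwisted second homology is, so the suggested ``direct chain-level argument using a relative handle decomposition'' will not by itself produce the required vanishing; in the slice-disk case the argument really uses vanishing (or special structure) of the relevant $\Q\Lambda$-coefficient second homology, which is unavailable here. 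Closing exactly this gap is the content of the previously proved result the paper cites: either quote Theorem~\ref{thm:nontriviality} as the paper does (your definition of $P_W$ then matches its $P$ verbatim, and you are done), or you must reproduce the non-formal duality and rank argument of \cite[Theorem 6.6]{CHL3}, noting that for a knot the needed rank condition $\operatorname{rank}_{\Q\Lambda}H_1(M_J;\Q\Lambda)=0=\beta_1(M_J)-1$ is automatic by \cite[Lemma 2.11]{COT} because the restricted coefficient system is non-trivial. The remainder of your outline (naturality of the Blanchfield construction under $\Q[t,t^{-1}]\hookrightarrow\Q\Lambda$ and injectivity of $\Q(t)/\Q[t,t^{-1}]\to\mathcal{K}_\Lambda/\Q\Lambda$) is consistent with how the paper handles the analogous identifications elsewhere (see the proof of Theorem~\ref{thm:nblinkimpliesfirstsig=0}), but it only becomes relevant once the vanishing over $\Q\Lambda$ has actually been established.
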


\begin{proof}[Proof of Lemma~\ref{lem:bordisminducesP}] Let $n\geq 0$ be the (minimal) integer such that $M_J$ that is null-bordant via $W$ at level $n$. Consider the coefficient system $\tilde{\psi}:\pi\to \pi/\pi^{(n+1)}_r\equiv\Lambda$ whose restriction to $\pi_1(M_J)$ we call $\psi$. Since $\psi$ factors non-trivially through $\Z$,
$$
H_1(M_J;\mathbb{Q}\Lambda)\cong H_1(M_J;\mathbb{Q}[t,t^{-1}]) \otimes_{\mathbb{Q}[t,t^{-1}]}\mathbb{Q}\Lambda\equiv  \mathcal{A}_0(J) \otimes_{\mathbb{Q}[t,t^{-1}]}\mathbb{Q}\Lambda.
$$
We now invoke a special case of a previous theorem of the authors.

\begin{thm}\label{thm:nontriviality}(Cochran-Harvey-Leidy)~\cite[Theorem 6.6]{CHL3} Suppose $W$ is a null-bordism for $M_J$ ($J$ a knot), $\Lambda$ is a PTFA group and $\tilde{\psi}:\pi_1(W)\to \Lambda$ is a coefficient system whose restriction to $\pi_1(M_J)$ factors non-trivially through $\mathbb{Z}$. Let $P$ be the kernel of the composition
$$
\mathcal{A}_0(J)\overset{id\otimes 1}\lra  \mathcal{A}_0(J) \otimes_{\mathbb{Q}[t,t^{-1}]}\mathbb{Q}\Lambda \overset{\cong}{\to} H_1(M_J;\mathbb{Q}\Lambda)\overset{j_*}\to H_1(W;\mathbb{Q}\Lambda).
$$
Then $P\subset P^\perp$ with respect to $\mathcal{B}l_0(J)$, the classical Blanchfield linking form on $\mathcal{A}_0(J)$.
\end{thm}
Setting $P_W=P$ we are done.
\end{proof}

Now we can state:

\begin{thm}\label{thm:nbimpliesfirstsig=0} If $J$ is a null-bordant knot then one of the first-order signatures of $J$ is zero. Specifically, if $J$ is null-bordant via $W$ then $\rho(M_J,\phi_{P_{W}})=0$.
\end{thm}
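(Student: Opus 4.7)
The plan is to mimic the strategy used in the proof of Proposition~\ref{prop:firstrho=rhozeroofder}: extend $\phi_{P_W}$ to a PTFA coefficient system on the null-bordism $W$ and then invoke Theorem~\ref{thm:CHLsigvanishes}. Let $n$ be the minimal level at which $J$ is null-bordant via $W$, write $\pi=\pi_1(W)$ and $G=\pi_1(M_J)$, and let $\Lambda=\pi/\pi^{(n+1)}_r$ be the PTFA group used to construct $P_W$ in Lemma~\ref{lem:bordisminducesP}. My candidate extension is the natural quotient
$$
\tilde{\phi}:\pi\twoheadrightarrow \Gamma := \pi/\pi^{(n+2)}_r,
$$
which is PTFA because each successive quotient in the rational derived series is torsion-free abelian. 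The strategy is: (i) show that $\tilde{\phi}|_G$ has kernel exactly $\tilde{P}_W$, so that $G/\tilde{P}_W$ embeds as a subgroup of $\Gamma$ and property (1) of $\rho$-invariants yields $\rho(M_J,\phi_{P_W})=\rho(M_J,\tilde{\phi}|_G)$; then (ii) apply Theorem~\ref{thm:CHLsigvanishes} to $(W,\tilde{\phi})$ to force this common value to be zero.

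For step (i), the key algebraic identifications are $H_1(W;\Q\Lambda)\cong (\pi^{(n+1)}_r)^{ab}\otimes\Q$ together with the fact that, by definition of the rational derived series, $\pi^{(n+1)}_r/\pi^{(n+2)}_r$ is the torsion-free quotient of $(\pi^{(n+1)}_r)^{ab}$.  For $\tilde{P}_W\subset\ker(\tilde{\phi}|_G)$: since $j_*(G)\subset\pi^{(n)}_r$, we have $j_*(G^{(1)})\subset\pi^{(n+1)}_r$, and any $x\in\tilde{P}_W\subset G^{(1)}$ represents an element of $P_W\subset\mathcal{A}_0(J)$, which by construction vanishes in $H_1(W;\Q\Lambda)$; hence $j_*(x)$ is torsion in $(\pi^{(n+1)}_r)^{ab}$ and therefore lies in $\pi^{(n+2)}_r$.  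For the reverse inclusion, suppose $y\in G$ with $j_*(y)\in\pi^{(n+2)}_r$; since the level is exactly $n$, the composite $G\to\pi^{(n)}_r/\pi^{(n+1)}_r$ factors as $G\twoheadrightarrow G^{ab}=\Z\hookrightarrow\pi^{(n)}_r/\pi^{(n+1)}_r$, so $y\in G^{(1)}$. Running the previous argument in reverse places $y$ in the kernel of $\mathcal{A}_0(J)\to H_1(W;\Q\Lambda)$, which is $P_W$, so $y\in\tilde{P}_W$.

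Step (ii) is now immediate. The restriction $\tilde{\phi}|_G$ is non-trivial because its further projection to $\Lambda$ is the level-$n$ system, non-trivial on $G$ by hypothesis. Since $\beta_1(M_J)=1=\beta_0(W)$, Theorem~\ref{thm:CHLsigvanishes} gives
$$
|\rho(M_J,\tilde{\phi}|_G)|\leq 1-1-\mathrm{rank}_{\Z\Gamma}H_1(M_J;\Z\Gamma)\leq 0,
$$
forcing $\rho(M_J,\phi_{P_W})=\rho(M_J,\tilde{\phi}|_G)=0$. The main obstacle I anticipate is the precise identification carried out in step (i): reconciling the ``Alexander-module'' description of $P_W$ from Lemma~\ref{lem:bordisminducesP} (via the isomorphism $H_1(M_J;\Q\Lambda)\cong\mathcal{A}_0(J)\otimes_{\Q[t,t^{-1}]}\Q\Lambda$) with the ``derived-series'' description of $\tilde{P}_W$ that views $G^{(1)}$ inside $\pi^{(n+1)}_r$. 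Establishing commutativity of the relevant diagrams and compatibility of the various natural maps is the technical crux.
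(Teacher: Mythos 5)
Your proposal is correct and follows essentially the same route as the paper: the paper also uses the quotient $\pi_1(W)\to\pi_1(W)/\pi_1(W)^{(n+2)}_r$, applies Theorem~\ref{thm:CHLsigvanishes} with $\beta_1(M_J)=1$ to force the $\rho$-invariant to vanish, and identifies the restricted kernel with $\tilde{P}_W$ via the same commutative diagram (using that $\pi^{(n+1)}_r/\pi^{(n+2)}_r$ injects into $\bigl(\pi^{(n+1)}_r/[\pi^{(n+1)}_r,\pi^{(n+1)}_r]\bigr)\otimes\Q$ and that $\ker\phi\subset G^{(1)}$ because $G^{ab}\cong\Z$ maps injectively into the torsion-free group $\pi^{(n)}_r/\pi^{(n+1)}_r$). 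The only differences are cosmetic: you identify the coefficient system with $\phi_{P_W}$ before invoking the signature bound, whereas the paper does it afterward, and the paper phrases the $\ker\phi\subset G^{(1)}$ step via the meridian $\mu$.
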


\textbf{Open Problem}: Suppose $J$ is a null-bordant knot. What can be said about its Casson-Gordon invariants?

The first-order signatures of null-bordant \emph{links} are highly constrained, but in this case nullities and higher-order nullities enter into the picture. We discuss a result only in some simpler cases where these nullities are maximal. It is also true that any null-bordism for a link $(J,f)$ corresponds to a particular first-order signature of $(J,f)$. For simplicity we will not state and prove this here. Rather, the interested reader will see that its verification is part of our proof of Theorem~\ref{thm:nblinkimpliesfirstsig=0}.

\begin{thm}\label{thm:nblinkimpliesfirstsig=0} If $(J,f)$ is a link of $c$ components with maximal Alexander nullity, i.e. $\eta(J,f)=c-1$, that is null-bordant via $W$ with $\beta_1(W)=1$, then one of the first-order signatures of $(J,f)$ is at most $c-1$ in absolute value.  Moreover if  $(J,f)$ is an infected trivial link then one of the first-order signatures of $(J,f)$ is $0$. In each case, the vanishing signature is the one corresponding to the null-bordism $W$.
\end{thm}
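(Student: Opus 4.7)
The plan is to adapt the proof of Theorem~\ref{thm:nbimpliesfirstsig=0} to links by using the null-bordism $W$ to construct a metabelian coefficient system on $\pi_1(M_J)$ that qualifies as a first-order signature of $(J,f)$ in the sense of Definition~\ref{defn:linkfirstordersignatures}, and then applying Theorem~\ref{thm:CHLsigvanishes} to bound its $\rho$-invariant. Concretely, I set $\Gamma = \pi_1(W)/\pi_1(W)^{(2)}_r$, take $\tilde{\phi}:\pi_1(W)\to\Gamma$ to be the canonical surjection, and let $\phi = \tilde{\phi}|_{\pi_1(M_J)}$. By functoriality of the rational derived series, $\phi$ kills $\pi_1(M_J)^{(2)}_r$; by the construction of the map $f$ attached to a null-bordism (Definition~\ref{defn:nullbordism}), the factorization of $\phi$ through the abelianization of $\pi_1(M_J)$ recovers $f$. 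This secures conditions (1) and (2) of Definition~\ref{defn:linkfirstordersignatures}, while condition (4) is immaterial per the remark following that definition.

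The main algebraic hurdle is the isotropy condition (3). For this I would invoke the link-version of Theorem 6.6 of~\cite{CHL3} used in Lemma~\ref{lem:bordisminducesP}: the kernel of the composition
$$
\mathcal{A}_0^f(J) \overset{\otimes 1}{\longrightarrow} \mathcal{A}_0^f(J) \otimes_{\mathbb{Q}[\mathbb{Z}^k]} \mathbb{Q}\Gamma \overset{\cong}{\longrightarrow} H_1(M_J;\mathbb{Q}\Gamma) \overset{j_*}{\longrightarrow} H_1(W;\mathbb{Q}\Gamma)
$$
is isotropic with respect to $\mathcal{B}\ell_0^f$. After identifying the rational Alexander module with $(\pi_1(M_J)^{(1)}/\pi_1(M_J)^{(2)}_r) \otimes_{\mathbb{Z}[\mathbb{Z}^k]} \mathbb{Q}[\mathbb{Z}^k]$, one recognises this kernel as exactly $\eta(\ker\phi) \cap \mathcal{A}_0^f(J)$, so $\phi$ is indeed a first-order signature of $(J,f)$, corresponding to $W$.

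Applying Theorem~\ref{thm:CHLsigvanishes} to $(W,\tilde{\phi})$ (whose non-triviality hypothesis on $\partial W$ is guaranteed by the null-bordism definition together with connectedness of $M_J$) and using $\beta_1(M_J) = c$ along with $\beta_0(W) = 1$ (since $W$ is connected), we obtain
$$
|\rho(M_J,\phi)| \leq c - 1 - \text{rank}_{\mathbb{Z}\Gamma} H_1(M_J;\mathbb{Z}\Gamma) \leq c - 1,
$$
which establishes the first assertion.

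For the infected trivial link case, write $J = U(\eta_1,\ldots,\eta_p;K_1,\ldots,K_p)$ with each $\eta_i \in \pi_1(M_U)^{(1)}$. The additivity Lemma~\ref{lem:additivity} applied to the metabelian $\phi$ yields
$$
\rho(M_J,\phi) = \rho(M_U,\phi_U) + \sum_{i=1}^{p} \rho(M_{K_i},\phi_{K_i}).
$$
The first term vanishes because $M_U \cong \#^{c}S^1\times S^2$ bounds $\natural^{c} S^1\times B^3$ over which every coefficient system extends, via property (4) of $\rho$-invariants. By Lemma~\ref{lem:additivity} each remaining summand equals either $\rho_0(K_i)$ or $0$, according as $\phi(\eta_i) \neq 1$ or $= 1$. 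The main obstacle I anticipate is to show $\phi(\eta_i) = 1$ for each $i$, i.e., that the classes $[\eta_i]$ vanish in $\pi_1(W)^{(1)}/\pi_1(W)^{(2)}_r$. Each $\eta_i$ is a commutator in $\pi_1(M_J)$ and hence already lies in $\pi_1(W)^{(1)}$; the additional constraints $\beta_1(W) = 1$ and $H_2(W)/i_*(H_2(\partial W)) = 0$, combined with the fact that the infection disks for $J$ admit compatible extensions into $W$, should force each $\eta_i$ into $\pi_1(W)^{(2)}_r$. Granting this, every summand vanishes and $\rho(M_J,\phi) = 0$ exactly.
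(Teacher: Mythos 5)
There are genuine gaps at all three key points. First, your coefficient system is pitched at the wrong level: you take $\Gamma=\pi_1(W)/\pi_1(W)^{(2)}_r$ regardless of the level of the null-bordism, whereas the correct choice is $\pi_1(W)/\pi_1(W)^{(n+2)}_r$ where $n$ is the (minimal) level at which $(M_J,f)$ is null-bordant via $W$. This is not cosmetic: in the application that drives the main theorems, Proposition~\ref{prop:derivisnullbord} produces a null-bordism at level $1$, so $j_*(\pi_1(M_J))\subset\pi_1(W)^{(1)}_r$ and your $\phi$ factors through the abelianization, i.e.\ $\phi=i\circ f$. Then $\rho(M_J,\phi)$ is just the zero-th order signature $\rho^f_0(J)$, and $\ker\phi=\ker f$ cannot satisfy condition (3) of Definition~\ref{defn:linkfirstordersignatures} (its image meets all of $\mathcal{A}_0^f(J)$, which is not isotropic in general), so you have not produced a first-order signature at all. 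Second, for the isotropy condition you appeal to a ``link-version'' of the result quoted in Lemma~\ref{lem:bordisminducesP}, but that statement (Theorem~\ref{thm:nontriviality}) is for knots; for links the paper must instead use Theorem~\ref{thm:selfannihil}, whose hypothesis is precisely the maximal Alexander nullity $\eta(J,f)=c-1$ (equivalently $\operatorname{rank}_{\Q\Lambda}H_1(M_J;\Q\Lambda)=\beta_1(M_J)-1$), and must then descend from isotropy for the $\Q\Lambda$-Blanchfield form on $TH_1(M_J;\Q\Lambda)$ to isotropy for $\mathcal{B}\ell_0^f$ via the injectivity arguments of \cite[Section 6]{CHL4} and \cite[Lemma 6.5]{CHL3}. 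Your proposal never uses the maximal-nullity hypothesis, nor $\beta_1(W)=1$ (which the paper uses, with \cite[Lemma 2.10]{COT}, to verify condition (4)); that both stated hypotheses go unused is a symptom that these verifications are missing.

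Third, the ``moreover'' clause: your additivity argument requires $\phi(\eta_i)=1$ for every infection curve, a step you explicitly leave open, and it is not a consequence of the hypotheses. A null-bordism is an arbitrary $4$-manifold with the stated homological properties; there is no reason the infection disks extend into $W$ or that the classes $[\eta_i]$ land in $\pi_1(W)^{(n+2)}_r$, and the intended applications (Example~\ref{ex:mainexampleshighergenus}, where the relevant first-order signatures are sums of a first-order signature of $L_1$ and possibly $\rho_0$-terms) show that only the \emph{sum} of the contributions need vanish, not each term. The paper's mechanism is different: the longitudes of the infecting \emph{knots} (not the curves $\eta_i$) lie in $\pi_1(S^3-K_i)^{(2)}\subset\pi_1(M_J)^{(2)}\subset\ker\phi$, and then \cite[Lemma 6.8]{CHL4} gives $\operatorname{rank}_{\Z\G}H_1(M_J;\Z\G)=\beta_1(M_J)-1$, so the bound of Theorem~\ref{thm:CHLsigvanishes} itself forces $\rho(M_J,\phi)=0$, with no term-by-term analysis. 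As written, your argument for this clause would fail, and the first part establishes a bound only for a representation that is not (in the relevant cases) a first-order signature of $(J,f)$.
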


\begin{proof}[Theorem~\ref{thm:nblinkimpliesfirstsig=0} implies Theorem~\ref{thm:nbimpliesfirstsig=0}] If $J$ is a null-bordant knot then the associated epimorphism $f$ is merely the abelianization. Thus the Alexander nullity $\eta(J,f)$ is zero. Since $c=1$ the result follows from Theorem~\ref{thm:nblinkimpliesfirstsig=0}.
\end{proof}

Before proving Theorems~\ref{thm:nbimpliesfirstsig=0} and ~\ref{thm:nblinkimpliesfirstsig=0}, we show how they imply our main theorems, Theorem~\ref{thm:main} and Theorem~\ref{thm:main2}.

\begin{proof}[Proof of Theorem~\ref{thm:main2}] If $\Delta_K(t)=1$ the the theorem is true since all the signatures are zero. Suppose $K$ is a slice knot and $\Delta_K(t)\neq 1$. Then the $\mathbb{Q}$-rank of $\mathcal{A}_0(K)$ is greater than zero. By ~\ref{eq:defP}, any slice disk, $\Delta$, corresponds to a particular Lagrangian $P_\Delta$. By Theorem~\ref{thm:firstordersigs=0} the corresponding first-order signature of $K$ vanishes. By hypothesis there is a representative of $P_\Delta$, $J=\frac{\partial K}{\partial \mathfrak{m}}$, wherein $(J,f)$ has maximal Alexander nullity (or, in the second case, $J$ is an infected trivial link). Assume that we have chosen such a representative and let $\Sigma$ denote the chosen Seifert surface. By Proposition~\ref{prop:derivisnullbord}, $(J,f)$ is null-bordant at level $1$. By Theorem~\ref{thm:nblinkimpliesfirstsig=0}, one of the first-order signatures of $(J,f)$ is at most genus$(\Sigma$)$-1$ in absolute value (or, in the second case, is zero). Then, by definition, any set of second-order signatures for $K$ that is constructed using, for each Lagrangian, such a representative, contains a number of absolute value at most genus$(\Sigma$)$-1$ (or, in the second case, contains zero).
\end{proof}

Even though Theorem~\ref{thm:nbimpliesfirstsig=0} is a special case of Theorem~\ref{thm:nblinkimpliesfirstsig=0}, we give an independent proof of it for the sake of clarity.

\begin{proof}[Proof of Theorem~\ref{thm:nbimpliesfirstsig=0}] Let $n\geq 0$ be the minimal integer such that $M_J$ that is null-bordant via some $W$ at level $n$. Let $\G=\pi_1(W)/\pi_1(W)^{(n+2)}_r$, let $\tilde{\phi}:\pi_1(W)\to \G$ be the canonical quotient map and let $\phi$ denote the restriction of $\tilde{\phi}$ to $\pi_1(M_J)$. Note that $\phi$ is non-trivial since, by hypothesis, the composition
$$
\psi:\pi_1(M_J)\overset{\phi}{\longrightarrow}\pi_1(W)/\pi_1(W)^{(n+2)}_r\to \pi_1(W)/\pi_1(W)^{(n+1)}_r
$$
is non-trivial. Since $\beta_1(M_J)=1$, $\text{rank}_{\mathbb{Z}\G}H_1(M_J;\mathbb{Z}\G)=0$ by ~\cite[Lemma 2.11]{COT}. Therefore we may apply Theorem~\ref{thm:CHLsigvanishes} to $W$ and $\tilde{\phi}$ and conclude that
$$
\rho(M_J,\phi)= 0.
$$

It only remains to identify $\rho(M_J,\phi)$ as a first-order signature of $J$. Since $n$ was chosen to be minimal, $j_*(\pi_1(M_J))\subset\pi_1(W)^{(n)}_r$ and consequently $\phi(\pi_1(M_J)^{(2)})=0$. Thus $\phi$ factors as
$$
\pi_1(M_J)\twoheadrightarrow \pi_1(M_J)/\pi_1(M_J)^{(2)}\to \text{image}(\phi).
$$
where the image of $\phi$ is metabelian. Let $\phi$ (continue to) denote this epimorphism with restricted range. For simplicity let $\pi=\pi_1(W)$ and $G=\pi_1(M_J)$. Since $j_*(G)\subset\pi^{(n)}_r$, $\phi(G^{(1)})\subset\pi_{r}^{(n+1)}$. Thus $G^{(1)}\subset \ker{\psi}$ and so the image of $\psi$ is a non-trivial abelian subgroup of $\pi^{(n)}_r/\pi^{(n+1)}_r$. Since the latter group is torsion-free abelian, the image of $\psi$ is infinite cyclic, generated by the meridian, $\mu$, of $J$. We claim that the kernel of $\phi:G\to \G$ is contained in $G^{(1)}$. For suppose $x \in \ker{\phi}$ and $x=\mu^{m}y$
 where $y \in G^{(1)}$. Since $x \in \ker{\phi}$, clearly $x\in \ker{\psi}$. Since $y\in \ker{\psi}$, $\mu^{m}\in\ker{\psi}$, but this contradicts the fact that $\psi(\mu)$ has infinite order unless $m=0$. Thus $\ker{\phi}\subset G^{(1)}$.

It remains to show that $\ker{\phi}= \ker{(G^{(1)}\to G^{(1)}/G^{(2)}\to \mathcal{A}_0(J)/P)}$ for some submodule $P\subset \mathcal{A}_0(J)$ such that $P\subset P^\perp$ with respect to the classical Blanchfield form on $J$. Consider the coefficient system $\tilde{\psi}:\pi\to \pi/\pi^{(n+1)}_r\equiv\Lambda$ whose restriction to $\pi_1(M_J)$ we have called $\psi$. Since $\psi$ factors through $\Z$,
$$
H_1(M_J;\mathbb{Q}\Lambda)\cong H_1(M_J;\mathbb{Q}[t,t^{-1}]) \otimes_{\mathbb{Q}[t,t^{-1}]}\mathbb{Q}\Lambda.
$$
Consider the following commutative diagram where $i$ is injective.
\begin{equation*}
\begin{CD}
\pi_1(M_J)^{(1)}      @>\equiv>>    \pi_1(M_J)^{(1)}  @>j_*>> \pi^{(n+1)}_r @>>>
\pi^{(n+1)}_r/\pi^{(n+2)}_r \\
  @VVpV   @VVV        @VVV       @VViV\\
\mathcal{A}_0(J)     @>id\otimes 1>>  H_1(M_J;\mathbb{Q}\Lambda)    @>j_*>> H_1(W;\mathbb{Q}\Lambda) @>\cong>>
  (\pi^{(n+1)}_r/[\pi^{(n+1)}_r,\pi^{(n+1)}_r])\otimes_\mathbb{Z} \mathbb{Q}\\
\end{CD}
\end{equation*}
The composition in the top row is $\phi$. By definition (Lemma~\ref{lem:bordisminducesP}), the kernel of the composition in the bottom row is $P_W$, which, by Theorem~\ref{thm:nontriviality}, is an isotropic submodule. Since $i$ is injective, it follows that $\ker{\phi}=p^{-1}(P_W)$. It follows that $P_W$ is the desired submodule $P$ of the Alexander module referred to above.

This completes the verification that $\rho(M_J,\phi)$ is a first-order signature of $J$ and hence completes the proof of Theorem~\ref{thm:nbimpliesfirstsig=0}.

\end{proof}

\begin{proof}[Proof of Theorem~\ref{thm:nblinkimpliesfirstsig=0}] Suppose $(M_J,f)$ that is null-bordant via $W$ at level $n$. Let $\G=\pi_1(W)/\pi_1(W)^{(n+2)}_r$, let $\tilde{\phi}:\pi_1(W)\to \G$ be the canonical quotient map, and let $\phi$ denote the restriction of $\tilde{\phi}$ to $\pi_1(M_J)$. Note that $\phi$ is non-trivial since, by definition of null-bordism, the composition
$$
\psi:\pi_1(M_J)\overset{\phi}{\longrightarrow}\pi_1(W)/\pi_1(W)^{(n+2)}_r\overset{p}{\longrightarrow} \pi_1(W)/\pi_1(W)^{(n+1)}_r
$$
is the non-trivial $f:\pi_1(M_J)\to A\cong\Z^d$ followed by an embedding. Therefore we may apply Theorem~\ref{thm:CHLsigvanishes} to $W$ and $\tilde{\phi}$ and conclude that
$$
|\rho(M_J,\phi)|\leq \beta_1(M_J)-1-\text{rank}_{\Z\G}H_1(M_J;\Z\G)\leq c-1,
$$
where $J$ has $c$ components. In the special case that $J$ is an infected trivial link, observe that any longitude $\ell$ of an infecting knot $K$ lies in
$$
\pi_1(S^3-K)^{(2)}\subset \pi_1(M_J)^{(2)} \subset \ker\phi.
$$
It then follows directly from ~\cite[Lemma 6.8]{CHL4} that
$$
\text{rank}_{\Z\G}H_1(M_J;\Z\G)=\beta_1(M_J)-1,
$$
so $\rho(M_J,\phi)=0$.

It only remains to identify $\rho(M_J,\phi)$ as a first-order signature of $(J,f)$. Let $\pi=\pi_1(W)$ and $G=\pi_1(M_J)$. Since $n$ is minimal, $j_*(G)\subset\pi^{(n)}_r$ and consequently $j_*(G^{(2)})=0$. Let $K$ denote the kernel of $\phi$. Thus $\phi$ factors as
$$
G\twoheadrightarrow G/G^{(2)}_r\to \text{image}(\phi)=G/K.
$$
 Note that $G^{(2)}_r\subset K$ and $G/K$ is PTFA since it is a subgroup of the PTFA group $\G$. Then consider the commutative diagram

\begin{equation}\label{eq:firstordersig2}
\begin{CD}
G       @>>> G/K @>>>{\hookrightarrow} \pi/\pi^{(n+2)}_r\\
  @VVidV           @VVV      @VVpV \\
G        @>>>  \text{image}(f) @>i>> \pi/\pi^{(n+1)}_r\\
\end{CD}
\end{equation}
By definition of null-bordism, the bottom composition is $f$ followed by an embedding $i$. Thus $K\subset \ker(f)$. This establishes properties
$1)$ and $2$ of Definition~\ref{defn:linkfirstordersignatures}.

Consider the coefficient system $\tilde{\psi}:\pi\to \pi/\pi^{(n+1)}_r\equiv\Lambda$ whose restriction to $\pi_1(M_J)$ we denote $\psi$. Since $\psi=i\circ f$ where $i$ is injective
\begin{equation}\label{eq:flat}
H_1(M_J;\mathbb{Q}\Lambda)\cong H_1(M_J;\mathbb{Q}[\Z^d]) \otimes_{\mathbb{Q}[\Z^d]}\mathbb{Q}\Lambda\cong\big(\ker(f)/[\ker(f),\ker(f)]\big)\otimes_{\mathbb{Z}[\Z^d]}\mathbb{Q}\Lambda.
\end{equation}
Consider the following commutative diagram.

\begin{equation*}
\begin{CD}
K\cap \eta^{-1}(\mathcal{A}_0^f(J))      @>\subset>>    \ker(f)  @>j_*>> \pi^{(n+1)}_r @>>>
\pi^{(n+1)}_r/\pi^{(n+2)}_r \\
  @VV{\eta}V   @VVV        @VVV       @VViV \\
\mathcal{A}_0^f(J)     @>id\otimes 1>>  H_1(M_J;\mathbb{Q}\Lambda)    @>j_*>> H_1(W;\mathbb{Q}\Lambda) @>\cong>>
  \frac{\pi^{(n+1)}_r}{[\pi^{(n+1)}_r,\pi^{(n+1)}_r]}\otimes_\mathbb{Z} \mathbb{Q}\\
\end{CD}
\end{equation*}
The composition in the top row is a restriction of $\phi$ and is identically zero since $K=\ker(\phi)$. Let $P'$ be the kernel of the composition in the bottom row. It follows that if $P$ is the span of $\eta(K)\cap \mathcal{A}_0^f(J)$ then $P\subset P'$. We now need only show that $P$ is an isotropic submodule with respect to the ordinary Blanchfield form on $\mathcal{A}_0^f(J)$ since this will  complete the verification of property $3$ of Definition~\ref{defn:linkfirstordersignatures}. For this we need a special case of a previous theorem of the authors.

\begin{thm}\label{thm:selfannihil}(Cochran-Harvey-Leidy)~\cite[Theorem 6.3]{CHL3} Suppose $M_J$ is null-bordant via $W$ and $\tilde{\psi}:\pi_1(W)\ra\Lambda$ is a non-trivial coefficient system where
$\Lambda$ is a PTFA group. Suppose that $\text{rank}_{\mathbb{Q}\Lambda}H_1(M_J;\mathbb{Q}\Lambda)=\beta_1(M_J)-1$. If $P$ is the kernel of the inclusion-induced map
$$
TH_1(M_J;\mathbb{Q}\Lambda)\xrightarrow{j_{\ast}} TH_1(W;\mathbb{Q}\Lambda),
$$
then $P\subset P^\perp$ with respect to the Blanchfield form on $TH_1(M_J;\mathbb{Q}\Lambda)$.
\end{thm}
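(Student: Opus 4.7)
The strategy is the classical one: realize the Blanchfield pairing of classes in $P$ via a $4$-dimensional intersection in $W$, then use the null-bordism and rank hypotheses to force that intersection to lie in $\mathcal{R}$, hence vanish mod $\mathcal{R}$. I would follow the template of the slice-disk case from~\cite{COT} (and of the subsequent Friedl/Leidy refinements), but pay careful attention to the fact that, unlike a slice disk exterior, $H_2(W;\mathbb{Z})$ need not vanish absolutely --- only modulo the image of the boundary.

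First I would set up the coefficients. Since $\Lambda$ is PTFA, $\mathcal{R}=\mathbb{Q}\Lambda$ is a right Ore domain with skew field of fractions $\mathcal{K}$, and there is a short exact sequence of $(\mathcal{R},\mathcal{R})$-bimodules $0\to\mathcal{R}\to\mathcal{K}\to\mathcal{K}/\mathcal{R}\to0$ whose induced (co)homology Bockstein $B$ enters the definition of $\mathcal{B}\ell_0^f$ (via Poincar\'e duality on $M_J$ and Kronecker evaluation, as recalled in Section~\ref{sec:notation}). I would then write down the ladder of long exact sequences of the pair $(W,M_J)$ in coefficients $\mathcal{R}$, $\mathcal{K}$, and $\mathcal{K}/\mathcal{R}$, together with Poincar\'e--Lefschetz duality $H_k(W,M_J;\mathcal{R})\cong H^{4-k}(W;\mathcal{R})$ (and analogously over $\mathcal{K}$ and $\mathcal{K}/\mathcal{R}$).

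Next, for $x,y\in P$, I would lift them to the $4$-manifold. Because $j_*x=j_*y=0$ in $TH_1(W;\mathcal{R})$, exactness produces classes $X,Y\in H_2(W,M_J;\mathcal{R})$ with $\partial X=x$, $\partial Y=y$. A standard diagram chase --- applying naturality of $B$ with respect to $M_J\hookrightarrow W$, together with Poincar\'e--Lefschetz duality --- then yields the ``$4$-dimensional Blanchfield formula''
$$
\mathcal{B}\ell_0^f(x,y)\;=\;\langle[\widetilde{Y}],X\rangle\pmod{\mathcal{R}},
$$
where $\widetilde{Y}\in H^2(W;\mathcal{K})$ is a Poincar\'e--Lefschetz dual of $Y$ and $[\widetilde{Y}]$ denotes its image in $H^2(W;\mathcal{K}/\mathcal{R})$. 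This identity is the technical heart of the classical argument and carries over verbatim to our setting since it only uses bimodule structure and naturality.

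The final step --- and the main obstacle --- is to show that the $\mathcal{K}$-valued pairing $\langle\widetilde Y,X\rangle$ actually lies in $\mathcal{R}$, so that its class in $\mathcal{K}/\mathcal{R}$ is zero. Equivalently, one wants to promote $\widetilde Y$ to a class in $H^2(W;\mathcal{R})$. Here the rank hypothesis $\operatorname{rank}_\mathcal{R}H_1(M_J;\mathcal{R})=\beta_1(M_J)-1$ together with the null-bordism condition $H_2(W;\mathbb{Z})/i_*H_2(M_J;\mathbb{Z})=0$ must be combined. Using $\chi(M_J)=0$, the Euler characteristic $\chi(W,M_J)=\chi(W)$ constrains $\operatorname{rank}_{\mathcal{K}}H_2(W;\mathcal{K})-\operatorname{rank}_{\mathcal{K}}H_1(W;\mathcal{K})$, and the null-bordism condition upgrades to a statement about the image $H_2(W;\mathcal{R})\to H_2(W;\mathcal{K})$ being large enough to span. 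Running this rank count through the ladder of long exact sequences shows that, modulo adjusting $Y$ by a class coming from $H_2(W;\mathcal{R})$ (which does not change $\partial Y=y$), the class $[\widetilde Y]\in H^2(W;\mathcal{K}/\mathcal{R})$ vanishes. Then $\mathcal{B}\ell_0^f(x,y)=0$, proving $P\subset P^\perp$. The delicate point throughout is that the null-bordism hypothesis is only an \emph{integral} statement, so one must be careful in transferring it to rank information over $\mathcal{R}$ and $\mathcal{K}$; this is where the given rank equality for $H_1(M_J;\mathcal{R})$ is essential.
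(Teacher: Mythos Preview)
The present paper does not prove this theorem; it is quoted verbatim from \cite[Theorem~6.3]{CHL3} and invoked as a black box inside the proof of Theorem~\ref{thm:nblinkimpliesfirstsig=0}. There is therefore no in-paper argument to compare your proposal against.

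That said, your strategy is exactly the one used in \cite{CHL3}, and before that in \cite[Theorem~4.4]{COT}: lift $x,y\in P$ to relative classes $X,Y\in H_2(W,M_J;\mathcal R)$ via the long exact sequence of the pair, rewrite $\mathcal B\ell(x,y)$ as a Kronecker evaluation over $W$ using Poincar\'e--Lefschetz duality and naturality of the Bockstein, and then argue the result lies in $\mathcal R$. Steps~1--4 of your outline are correct.

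Step~5 is where the content specific to null-bordisms lives, and here you have only gestured at the argument. What you actually need is that $PD(X)\in H^2(W;\mathcal R)$ can be arranged (by modifying $X$ within its $\partial$-coset) to die in $H^2(W;\mathcal K)$, so that it lifts through the Bockstein $B_W$ on $W$; then $B^{-1}PD(x)=i^*\beta$ for some $\beta\in H^1(W;\mathcal K/\mathcal R)$ and $\mathcal B\ell(x,y)=\langle\beta,j_*y\rangle=0$. Establishing this from the \emph{integral} hypothesis $H_2(W;\mathbb Z)=i_*H_2(M_J;\mathbb Z)$ together with the rank hypothesis $\operatorname{rank}_{\mathcal R}H_1(M_J;\mathcal R)=\beta_1(M_J)-1$ is precisely the delicate passage you flag in your last sentence; it is the substance of the theorem and cannot be waved through with ``running this rank count through the ladder.'' In \cite{CHL3} this step is carried out by an explicit duality and rank computation, and your sketch would need to supply those details before it constitutes a proof.
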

By (~\ref{eq:flat}) above,
$$
\text{rank}_{\mathbb{Q}\Lambda}H_1(M_J;\mathbb{Q}\Lambda)= \text{rank}_{\mathbb{Q}[\Z^d]}H_1(M_J;\mathbb{Q}[\Z^d])=\text{rank}\mathcal{A}_o^f(J)=\eta(J,f),
$$
which equals $\beta_1(M_J)-1$ by hypothesis. Thus the hypotheses of Theorem~\ref{thm:selfannihil} are satisfied.  Now suppose $x,y\in P\subset P'$. Then $\{x\otimes 1,y\otimes 1\}\subset \tilde{P}$. Apply Theorem~\ref{thm:selfannihil} to conclude that
$$
\mathcal{B}l^{M_J}_{\mathbb{Q}\Lambda}(x\otimes 1,y\otimes 1)=0.
$$
By the arguments of ~\cite[Section 6]{CHL4},
$$
\mathcal{B}l^{M_J}_{\mathbb{Q}\Lambda}(x\otimes 1,y\otimes 1)=\ov\phi(\mathcal{B}l_0^f(x,y))=0
$$
where $\ov\phi$ is the map on quotient fields induced by the embedding $\Z^d\hookrightarrow \Lambda$ and $\mathcal{B}l_0^f$ is the Blanchfield form on $\mathcal{A}_0^f(J)$. By the argument of ~\cite[Lemma 6.5]{CHL3} $\ov\phi$ is injective. Thus $\mathcal{B}l_0^f(x,y)=0$. Hence $P$ is isotropic with respect to $\mathcal{B}l_0^f$.

To verify property $4$ of Definition~\ref{defn:linkfirstordersignatures}, note that  $H_1(W;\mathbb{Q}\Lambda)$ is a $\Q[\Z^d]$-module via the embedding $i$ of Diagram~\ref{eq:firstordersig2}. Since $\beta_1(W)=1$ and $W$ is compact, this is a finitely-generated torsion module ~\cite[Lemma 2.10]{COT}. Since $G^{(1)}/K$ embeds in $\pi^{(n+1)}_r/\pi^{(n+2)}_r$ by Diagram~\ref{eq:firstordersig2},
$$
G^{(1)}/K \otimes_{\Z[Z^d]} \Q[Z^d]\subset H_1(W;\mathbb{Q}\Lambda).
$$
Thus $G^{(1)}/K \otimes_{\Z[Z^d]} \Q[Z^d]$ is a torsion module.

This completes the verification that $\rho(M_J,\phi)$ is a first-order signature of $J$ and hence completes the proof of Theorem~\ref{thm:nblinkimpliesfirstsig=0}.

\end{proof}

\section{Antiderivatives of Links}\label{sec:antiderivatives}

Suppose $J=\{J_1,...,J_g\}$ is a link in $S^3$. Suppose $V$ is a $2g\times 2g$ Seifert matrix for some algebraically slice knot with respect to a symplectic basis whose first $g$ elements generate a metabolizer. We describe a simple procedure to create a knot $K$, called \textbf{an antiderivative of J} which possesses a Seifert surface $\Sigma$ and symplectic basis that realizes $V$ as its Seifert matrix (so its first $g$ elements generate a metabolizer $\mathfrak{m}$) such that $\frac{\partial K}{\partial \mathfrak{m} }=J$. To form $\Sigma$, and hence $K$, start with $g$ zero-twisted annuli (called $a$-bands) whose cores form the components of $J$. Then add a band (called the $i^{th}$  b-band) to the $i^{th}$ a-band, fusing the inner boundary circle of the a-band to its outer boundary circle. One has tremendous freedom in choosing these b-bands. Choose the twisting of the b-bands and the linking between the b-bands to mimic $V$. The result is the disjoint union of $g$ punctured tori. Now band these together using more bands to arrive at a genus $g$ connected surface $\Sigma$. The boundary of $\Sigma$ is the desired knot $K$, which has the required properties by construction. If the extra data of an epimorphism $f:J\to \Z^d$ is given then the antiderivative of $(J,f)$, denoted $\int (J,f)$ can be defined similarly so that $\frac{\partial K}{\partial \mathfrak{m} }=(J,f)$, except that the map $f$ restricts what Seifert matrices can be realized. In particular the Alexander polynomial of the antiderivative of $(J,f)$ must have degree $2d$. For example if $f$ is the zero map then any $\int (J,f)$ must have trivial Alexander module. Note that in this case one instance of $\int (J,f)$ is obtained by choosing the b-bands as simple as possible in which case the constructed antiderivative is the unknot.

 Suppose a fixed Seifert form has two ``independent'' metabolizers, that is $\mathfrak{m}_1\cap \mathfrak{m}_2=0$ as represented by a matrix $V$ with two disjoint $g\times g$ blocks of zeros. Then if two $g$-component links $J_1$, $J_2$ are given, one can modify the above procedure to choose the cores of the b-bands to form the link $J_2$ and in this way construct an antiderivative $K$ realizing $V$ and such that:
 $$
 \frac{\partial K}{\partial \mathfrak{m}_i }=J_i~~i=1,2.
 $$
 However if the metabolizers are not independent, it seems that such a result should not be expected.

\section{Extension of Results: the $(n)$-solvable filtration}\label{sec:nsolvable}

We explain how our results can be extended to show that first and second-order signatures obstruct a knots lying in certain terms of the $(n)$-solvable filtration ($n\in \frac{1}{2}\Z$) of ~\cite[Section 7,8]{COT}. The notion of $(n)$-solvability and the notion of null-bordism (Definition~\ref{defn:nullbordism}) have a common generalization called \textbf{null-($n$)-bordism} that was introduced in ~\cite[Section 5]{CHL3}. Although we shall not state the full extensions of our results to this category, this notion does arise in some of the proofs below. Recall that for $M=\partial V$ to be an $(n)$-solution one requires that $H_1(M;\Z)\to H_1(V;\Z)$ be an isomorphism, whereas for a null-bordism there is no such requirement. However, for a null-bordism $H_2(V)/H_2(\partial V)=0$, whereas for an $(n)$-solution, $H_2(V)$ is allowed but is of a special type. For a null-($n$)-bordism, we impose no condition on $H_1$ but require that $H_2(V;\Z)/H_2(\partial V)$ have special representatives just as in the definition of an $(n)$-solution. We also say \textbf{$V$ is a null-($n$)-bordism for $\partial V$ at level $m$} if, in addition,
$$
\pi_1(\partial V)\to \pi_1(V)/\pi_1(V)^{(m+1)}_r
$$
is non-trivial and $m$ is minimal for this property.

Recall that associated to any slice disk $\Delta$ for a knot $K$ was a Lagrangian $P_\Delta\subset \mathcal{A}_0(K)$ (see ~\ref{eq:defP}). This was derived from considering the inclusion $M_K\hookrightarrow B^4-\Delta$. This generalizes in an identical fashion to any $(n)$-solution $V$ for $K$ ($V$ replacing $B^4-\Delta$) as long as $n\geq 1$. So if $K\in \mathcal{F}_{(n)}$ where $n\geq 1$ and $V$ is an $(n)$-solution for $K$, then there is a \textbf{corresponding Lagrangian} $P_V$  (by ~\cite[Theorem 4.4, $n=1$]{COT}). If $V$ is merely a null-($n$)-bordism at some level then there is merely an associated \emph{isotropic submodule} which may not be a Lagrangian (just as in Lemma~\ref{lem:bordisminducesP}).

First we state the generalizations of first-order signatures to obstructions to $(1.5)$-solvability. Theorem~\ref{thm:strongerfirstordersigs=0} and Theorem~\ref{thm:strongerCooperThm} were shown previously by Cochran-Orr-Teichner, but are stated here for completeness. Corollary~\ref{cor:strongersliceimpliesrhobound} is new.

\begin{thm}[Generalization of Theorem~\ref{thm:firstordersigs=0};~{\cite[Thms 4.2, 4.4]{COT}~\cite[Proposition 5.8]{CHL3}}]\label{thm:strongerfirstordersigs=0} If $K\in \mathcal{F}_{(1.5)}$ then, for any Lagrangian $P_V$ that corresponds to a $(1.5)$-solution $V$, the corresponding first-order $L^{(2)}$-signature of $K$ vanishes. Thus if $K\in \mathcal{F}_{(1.5)}$  then the set of all first-order signatures corresponding to Lagrangians contains $0$.
\end{thm}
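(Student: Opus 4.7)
The plan is to mimic the slice-knot argument of Theorem~\ref{thm:firstordersigs=0}, using the coefficient system on $\pi_1(V)$ arising from its second derived quotient, and then to invoke the $L^{(2)}$-signature vanishing result for $(1.5)$-solutions. More precisely, let $V$ be a $(1.5)$-solution with $\partial V = M_K$, let $\pi = \pi_1(V)$, set $\Gamma = \pi/\pi^{(2)}_r$, and let $\tilde{\phi}: \pi \to \Gamma$ be the natural quotient. This is PTFA and metabelian, so its restriction $\phi$ to $G = \pi_1(M_K)$ is a candidate metabelian coefficient system on $M_K$.

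The key algebraic step is to identify $\phi$ with $\phi_{P_V}$. Since $\Gamma^{(2)}_r = 1$, the map $\phi$ factors through $G/G^{(2)}_r$, and hence through the quotient $G \to G/\tilde{P}$ for $\tilde{P} = \ker(G^{(1)} \to \pi^{(1)}_r/\pi^{(2)}_r)$. One then argues, exactly as in the proof of Proposition~\ref{prop:derivisnullbord} (via the identification of $H_1(V;\Q[t,t^{-1}])$ with $\pi^{(1)}_r/\pi^{(2)}_r \otimes \Q$ and the torsion-freeness of $\pi^{(1)}_r/\pi^{(2)}_r$), that $\tilde{P}$ coincides with $\tilde{P_V}$, the kernel of
\[
G^{(1)} \to G^{(1)}/G^{(2)} \to \mathcal{A}_0(K) \to \mathcal{A}_0(K)/P_V.
\]
Thus the induced map $G/\tilde{P_V} \hookrightarrow \Gamma$ is injective, and by property (1) of $\rho$-invariants one has $\rho(M_K, \phi_{P_V}) = \rho(M_K, \phi)$.

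The final step is to conclude $\rho(M_K, \phi) = 0$. Since $\tilde{\phi}$ extends $\phi$ over $\pi_1(V)$ with $V$ a $(1.5)$-solution, this is exactly the content of ~\cite[Thm.~4.2]{COT}: the $L^{(2)}$-signature of $V$ with coefficients in $\Gamma$ vanishes because a $(1.5)$-solution provides a basis of $H_2(V;\Z\Gamma)$ on which the equivariant intersection form is hyperbolic (the ``$\pi^{(2)}_r$-null'' condition), so $\sigma^{(2)}_\Gamma(V,\tilde{\phi}) = \sigma(V) = 0$ and hence $\rho(M_K, \phi) = 0$. The main obstacle in this plan is this last vanishing statement, as it is not a general fact about any $4$-manifold bounding $M_K$ but requires the precise definition of $(1.5)$-solvability and the careful accounting of second-derived commutators; the argument is essentially that of ~\cite[Thm.~4.2, Thm.~4.4]{COT}, repackaged in the broader null-$(n)$-bordism framework in ~\cite[Prop.~5.8]{CHL3}.
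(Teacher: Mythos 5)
Your proposal is correct and takes essentially the same route the paper does: the paper does not reprove this statement but attributes it to Cochran--Orr--Teichner, citing exactly the two ingredients you use (the identification of the metabelian coefficient system coming from $\pi_1(V)/\pi_1(V)^{(2)}_r$ with $\phi_{P_V}$, via the same kernel computation that appears in the proof of Proposition~\ref{prop:derivisnullbord}, and the vanishing of the $L^{(2)}$-signature defect for coefficient systems extending over a $(1.5)$-solution from \cite[Thms.~4.2, 4.4]{COT}). No gaps to report.
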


Combining Proposition~\ref{prop:firstrho=rhozeroofder} with Theorem~\ref{thm:strongerfirstordersigs=0} we get a generalization of Corollary~\ref{cor:sliceimpliesrhobound}.
\begin{cor}\label{cor:strongersliceimpliesrhobound} If $K\in \mathcal{F}_{(1.5)}$, $P$ is a Lagrangian corresponding to a $(1.5)$-solution and $(J,f)=\frac{\partial K}{\partial \mathfrak{m} }$ is a $c$-component link where $\mathfrak{m}$ represents $P$, then
$$
|\rho^f_0(J)|\leq c-1-\eta(J,f).
$$
\end{cor}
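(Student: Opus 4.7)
The plan is to combine Proposition~\ref{prop:firstrho=rhozeroofder} with Theorem~\ref{thm:strongerfirstordersigs=0} in an entirely direct fashion; the corollary is essentially a triangle-inequality observation once these two ingredients are in hand.

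First I would observe that Proposition~\ref{prop:firstrho=rhozeroofder} applies in this setting without modification: it only requires that $P$ be a Lagrangian of $\mathcal{A}_0(K)$ and that $(J,f) = \frac{\partial K}{\partial \mathfrak{m}}$ where $\mathfrak{m}$ represents $P$. Neither of these hypotheses uses the stronger assumption that $P$ corresponds to an honest slice disk; only the algebraic data of the Lagrangian and the geometric data of the metabolizer enter. So we immediately have
$$|\rho(M_K, \phi_P) - \rho^f_0(J)| \leq c - 1 - \eta(J,f).$$

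Second, I would invoke Theorem~\ref{thm:strongerfirstordersigs=0} to conclude that $\rho(M_K, \phi_P) = 0$. This is the one place the $(1.5)$-solvability hypothesis is used: because $K \in \mathcal{F}_{(1.5)}$ and $P = P_V$ corresponds to a $(1.5)$-solution $V$ (in the sense analogous to \eqref{eq:defP} with $V$ replacing $B^4 - \Delta$), the first-order $L^{(2)}$-signature of $K$ associated to $P$ vanishes. Substituting into the displayed inequality yields
$$|\rho^f_0(J)| \leq c - 1 - \eta(J,f),$$
which is exactly the claim.

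There is no real obstacle here; the work was done in proving Proposition~\ref{prop:firstrho=rhozeroofder} (which itself rested on the cobordism $E$ of Proposition~\ref{prop:Efacts} and the $L^{(2)}$-signature estimate Theorem~\ref{thm:CHLsigvanishes}) and in the generalization Theorem~\ref{thm:strongerfirstordersigs=0} of the Cochran--Orr--Teichner vanishing result to the $(1.5)$-solvable setting. The only conceptual point worth noting in the write-up is that Proposition~\ref{prop:firstrho=rhozeroofder} was stated in the context of genuine slice knots but its proof is purely algebraic/topological and depends only on the Lagrangian $P$ and the metabolizer $\mathfrak{m}$; hence it applies verbatim in the $(1.5)$-solvable setting. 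Thus the corollary is a one-line consequence of the two cited results.
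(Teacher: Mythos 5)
Your proposal is correct and is exactly the paper's argument: the authors prove this corollary by the same one-line combination of Proposition~\ref{prop:firstrho=rhozeroofder} (whose hypotheses indeed require only the Lagrangian and a representing metabolizer, not a slice disk) with the vanishing result Theorem~\ref{thm:strongerfirstordersigs=0}. Nothing further is needed.
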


A specific case of this, when $c=1$, yields a generalization of Cooper's Theorem, which is due to Cochran-Orr-Teichner.

\begin{thm}[Generalization of Theorem~\ref{thm:CooperThm} (Cochran-Orr-Teichner {\cite[Thm. 5.2 ]{COT2}})]\label{thm:strongerCooperThm} If $K\in \mathcal{F}_{(1.5)}$ is a genus one knot then, for any genus one Seifert surface $\Sigma$, there is a homologically essential simple closed curve of self-linking zero on $\Sigma$ which has vanishing zero-th order signature. (Beware that if $\Delta_K(t)=1$ then the latter signature is zero by definition).
\end{thm}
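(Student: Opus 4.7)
The plan is to derive Theorem~\ref{thm:strongerCooperThm} as a direct consequence of Corollary~\ref{cor:strongersliceimpliesrhobound} applied in the special case of genus one knots, handling the Alexander-polynomial-one case separately.

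First, suppose $\Delta_K(t)=1$. Then $\mathcal{A}_0(K)=0$ and the only Lagrangian is $P=0$, represented by any metabolizer $\mathfrak{m}$ on any genus one Seifert surface $\Sigma$. Any homologically essential simple closed curve of self-linking zero on $\Sigma$ is a derivative $J=\frac{\partial K}{\partial \mathfrak{m}}$; by construction the associated map $f:\pi_1(M_J)\to A$ is the zero homomorphism, so the zero-th order $L^{(2)}$-signature $\rho_0^f(J)=\rho(M_J,0)=0$ by property~(2) of von Neumann $\rho$-invariants in Section~\ref{sec:notation}. So I will dispense with this case immediately.

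Now suppose $\Delta_K(t)\neq 1$. Since $K\in\mathcal{F}_{(1.5)}\subset\mathcal{F}_{(1)}$, $K$ is algebraically slice, so $\mathcal{A}_0(K)$ admits a Lagrangian; in fact, for a genus one knot with nontrivial Alexander polynomial, $\mathcal{A}_0(K)$ has $\Q$-rank $2$ and there are precisely two Lagrangians, each of rank one over $\Q$. By Theorem~\ref{thm:strongerfirstordersigs=0}, given any $(1.5)$-solution $V$ for $K$, the associated Lagrangian $P=P_V$ has vanishing first-order $L^{(2)}$-signature. Applying Lemma~\ref{lem:lagrangianisrep} to the \emph{given} genus one Seifert surface $\Sigma$, there is a metabolizer $\mathfrak{m}\subset H_1(\Sigma;\Z)$ that represents $P_V$. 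Because $\Sigma$ has genus one, $\mathfrak{m}$ is a rank-one summand of $H_1(\Sigma;\Z)\cong\Z^2$, and hence is represented by a single homologically essential simple closed curve $J$ of self-linking zero on $\Sigma$; this is our candidate derivative $J=\frac{\partial K}{\partial \mathfrak{m}}$.

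Finally I invoke Corollary~\ref{cor:strongersliceimpliesrhobound}: since $c=1$ (the derivative is a knot) and, because $\Delta_K(t)\neq 1$, the canonical map $f:\pi_1(M_J)\to A$ is nontrivial so $J$ a knot forces $\eta(J,f)=0$, the inequality reads
\[
|\rho_0^f(J)|\leq 1-1-0=0,
\]
yielding $\rho_0^f(J)=0$. Since $f$ is (up to isomorphism) the abelianization map $\pi_1(M_J)\to\Z$, this $\rho_0^f(J)$ is the classical zero-th order signature $\rho_0(J)$ of the knot $J$, completing the proof. The only conceptual subtlety is the appeal to Lemma~\ref{lem:lagrangianisrep} to guarantee that the Lagrangian from $V$ is realized by a metabolizer on the \emph{prescribed} $\Sigma$ rather than on some other Seifert surface; for genus one, as noted in the paragraph following Lemma~\ref{lem:lagrangianisrep}, this reduces to the elementary fact that $H_1(\Sigma;\Q)$ surjects onto $\mathcal{A}_0(K)$, so no real obstacle arises.
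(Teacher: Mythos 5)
Your proof is correct and follows essentially the same route as the paper, which obtains Theorem~\ref{thm:strongerCooperThm} precisely as the $c=1$ case of Corollary~\ref{cor:strongersliceimpliesrhobound} (itself Proposition~\ref{prop:firstrho=rhozeroofder} combined with Theorem~\ref{thm:strongerfirstordersigs=0}), with Lemma~\ref{lem:lagrangianisrep} supplying a metabolizer on the prescribed $\Sigma$ and $\eta(J,f)=0$ since the derivative is a knot with $f$ nontrivial. You merely spell out the details, including the trivial $\Delta_K(t)=1$ case, that the paper leaves implicit.
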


We can generalize our main results to show that second-order signatures obstruct a knot lying in $ \mathcal{F}_{(2.5)}$.

\begin{thm}[Generalization of Theorem~\ref{thm:main}]\label{thm:strongermain}  If $K\in \mathcal{F}_{(2.5)}$ is a genus one knot, then for any genus one Seifert surface $\Sigma$, there is a homologically essential simple closed curve $J$ of self-linking zero on $\Sigma$ which has vanishing zero-th order signature and a vanishing first-order signature. (Beware that if $\Delta_K(t)=1$ then the latter signatures are zero by definition).
\end{thm}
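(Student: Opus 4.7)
The strategy is to imitate the proof of Theorem~\ref{thm:main} word-for-word, using the notion of null-$(n)$-bordism from~\cite[Section 5]{CHL3} in place of the ordinary null-bordisms of Section~\ref{sec:nullbordisms}. We may assume $\Delta_K(t)\neq 1$, since otherwise the relevant signatures are zero by definition.

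Suppose $K\in\mathcal{F}_{(2.5)}$ is genus one with fixed genus one Seifert surface $\Sigma$, and let $V$ be any $(2.5)$-solution for $K$. Since $V$ is in particular a $(1.5)$-solution, Theorem~\ref{thm:strongerfirstordersigs=0} produces a Lagrangian $P_V\subset\mathcal{A}_0(K)$ whose corresponding first-order signature of $K$ vanishes. The easy genus-one case of Lemma~\ref{lem:lagrangianisrep} (see the remark following its statement) supplies a metabolizer $\mathfrak{m}$ on $\Sigma$ representing $P_V$. Let $J=\frac{\partial K}{\partial \mathfrak{m}}$; since $\Sigma$ has genus one, $J$ is a knot. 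Now form $W = V\cup_{M_K} E$, where $E$ is the cobordism from $M_K$ to $M_J$ constructed in Section~\ref{subsec:bordisms}; then $\partial W = M_J$.

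I plan to show that $W$ is a null-$(2)$-bordism for $(M_J,f)$ at level $1$, where $f:\pi_1(M_J)\to\Z$ is the canonical epimorphism attached to $\frac{\partial K}{\partial \mathfrak{m}}$. The identification of $f$ and the verification that the level is exactly $1$ go through verbatim as in the proof of Proposition~\ref{prop:derivisnullbord}, since that argument used only $\Delta_K(t)\neq 1$ together with the Lagrangian identity Equation~\ref{eq:defP}, both of which persist when $V$ is only assumed to be a $(2.5)$-solution. The essential new ingredient is verifying the $H_2$-condition for null-$(2)$-bordism: the Lagrangian/dual surface pairs witnessing $(2.5)$-solvability of $V$ must still form valid null-$(2)$-bordism data in $W$. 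By Property~$7$ of Proposition~\ref{prop:Efacts}, $H_2(E)$ is absorbed into $j_*H_2(\partial^+ E)$, so Mayer--Vietoris together with Properties~$4$ and~$5$ yields an isomorphism $H_2(W)/j_*H_2(\partial W)\cong H_2(V)/j_*H_2(M_K)$. Thus the surfaces from $V$ push forward to generators of $H_2(W)/j_*H_2(\partial W)$, and the isomorphism $\pi_1(V)/\pi_1(V)^{(2)}_r\cong\pi_1(W)/\pi_1(W)^{(2)}_r$ established in the proof of Proposition~\ref{prop:derivisnullbord} ensures that the required lifts of these surfaces to the $\pi_1^{(2)}_r$-cover survive.

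Finally, one applies the null-$(n)$-bordism generalizations of Theorems~\ref{thm:nbimpliesrho=0} and~\ref{thm:nbimpliesfirstsig=0}; these follow by running those proofs with coefficient systems $\Gamma=\pi_1(W)/\pi_1(W)^{(n+2)}_r$, using that Theorems~\ref{thm:CHLsigvanishes} and~\ref{thm:selfannihil} from~\cite{CHL3} are already stated in the null-$(n)$-bordism generality. The conclusion is $\rho_0(J)=0$ and that the first-order signature of $J$ corresponding to the isotropic submodule $P_W$ vanishes, which is precisely the desired existence of $J$ with vanishing zero-th and first-order signatures. The main obstacle is the $H_2$-verification above: one must track the derived series one step deeper than in Proposition~\ref{prop:derivisnullbord} and confirm that the $2$- and $3$-handle attachments used to build $E$ do not disrupt the $(2)$-Lagrangian structure inherited from $V$. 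Since $E$'s second homology is absorbed into the boundary by Property~$7$, and since the attaching regions of the handles of $E$ lie deep in the derived series of $\pi_1(V)$ (as in the proof of Proposition~\ref{prop:derivisnullbord}), this verification should go through.
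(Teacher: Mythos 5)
Your overall architecture (form $W=V\cup_{M_K}E$, view it as a generalized bordism for $M_J$, extract a bound on $\rho(M_J,\phi)$, then identify $\rho(M_J,\phi)$ as a first-order signature via Theorem~\ref{thm:selfannihil}/Theorem~\ref{thm:nontriviality}) is the same skeleton the paper uses, but there is a genuine gap at the central analytic step. You record $W$ only as a \emph{null-$(2)$-bordism} and then assert that Theorem~\ref{thm:CHLsigvanishes} ``is already stated in the null-$(n)$-bordism generality,'' applying it over $W$ with the coefficient system $\G=\pi_1(W)/\pi_1(W)^{(3)}_r$. That cannot be right as stated. The hypothesis $H_2(W)/i_*(H_2(\partial W))=0$ is exactly what makes the bound in Theorem~\ref{thm:CHLsigvanishes} valid for an \emph{arbitrary} PTFA group $\G$; here $H_2(W)/i_*(H_2(\partial W))$ is essentially $H_2(V)$, which is nonzero in general, and the $L^{(2)}$-signature of the $\G$-twisted intersection form on it must be killed by the solvable structure of $V$. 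For $\G=\pi_1(W)/\pi_1(W)^{(3)}_r$ (a $(2)$-solvable group) this requires precisely the $(2.5)$-level data: a $(3)$-Lagrangian (surfaces whose $\pi_1$ maps into $\pi_1^{(3)}_r$, hence lifting to the $\G$-cover) together with $(2)$-duals. A $(2)$-Lagrangian with $(2)$-duals --- which is all your ``null-$(2)$-bordism'' bookkeeping retains --- does not lift to the $\pi_1(W)/\pi_1(W)^{(3)}_r$-cover and gives no control of the signature at that level. Indeed, your argument as written never uses the ``$.5$'' in the hypothesis $K\in\mathcal{F}_{(2.5)}$ at all, and would equally ``prove'' the theorem for $K\in\mathcal{F}_{(2)}$; if a bound of the shape of Theorem~\ref{thm:CHLsigvanishes} held for null-$(n)$-bordisms with no compatibility condition between $n$ and $\G$, the entire higher-order signature theory built on the solvable filtration would collapse. (The Mayer--Vietoris computation of $H_2(W)$ that you treat as ``the essential new ingredient'' is the easy part; the paper's Proposition~\ref{prop:strongerderivisnullbord} notes it in one line.)

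The paper avoids this issue by never applying the signature estimate over $W$: it applies ~\cite[Theorem 4.2, $n=2$]{COT} directly to the honest $(2.5)$-solution $V$ to get $\rho(M_K,\phi)=0$ with $\phi:\pi_1(W)\to\pi_1(W)/\pi_1(W)^{(3)}_r$, and applies Theorem~\ref{thm:CHLsigvanishes} only to $E$, which genuinely is a null-bordism (with two boundary components); the identification of $\rho(M_J,\phi)$ as a first-order signature of $J$ then uses that ~\cite[Theorem 6.6]{CHL3} holds in enough generality to apply to the $(2)$-bordism $W$, which is the one place where your appeal to extra generality matches what is actually available. To repair your route you would either have to work with null-$(2.5)$-bordisms and invoke (or prove) the version of the estimate in which the solvability class of $\G$ is matched to the level of the Lagrangian structure, verifying that the $(3)$-Lagrangian and $(2)$-dual surfaces of $V$ retain those properties in $W$, or simply follow the paper's splitting. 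Separately, the vanishing of the zero-th order signature of $J$ does not need any second-order input: it already follows from $\mathcal{F}_{(2.5)}\subset\mathcal{F}_{(1.5)}$ together with Corollary~\ref{cor:strongersliceimpliesrhobound} (with $c=1$), applied to the same Lagrangian $P_V$ and the same curve $J$.
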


Theorem~\ref{thm:strongermain} is a special case of the following theorem.

\begin{thm}[Generalization of Theorem~\ref{thm:main2}]\label{thm:strongermain2}  Suppose $K\in \mathcal{F}_{(2.5)}$ with the property that for each Lagrangian $P$ for which the first-order signature of $K$ corresponding to $P$ vanishes, it is possible to choose a representative $(\frac{\partial K}{\partial \mathfrak{m}},f)$ that is a link of maximal Alexander nullity. Then some member of any complete set of second-order signatures (computed using \textbf{such} representatives) has absolute value at most genus$(\Sigma)~-1$. Moreover, if it is possible to choose each representative $(J,f)$ to be an infected trivial link then any complete set of second-order signatures (computed using \textbf{such} representatives) contains zero.
\end{thm}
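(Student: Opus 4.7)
The strategy is to mirror the proof of Theorem~\ref{thm:main2} step by step, replacing each tool built for slice knots by its generalization to $(2.5)$-solvable knots, using the common generalization ``null-$(n)$-bordism'' discussed at the start of Section~\ref{sec:nsolvable} and developed in ~\cite[Section 5]{CHL3}. Specifically, the plan is: if $\Delta_K(t)=1$ the theorem is vacuous, so assume $\Delta_K(t)\neq 1$. Given a $(2.5)$-solution $V$ for $K$, note that $V$ is in particular a $(1.5)$-solution, so by the remark preceding Theorem~\ref{thm:strongerfirstordersigs=0} there is an associated Lagrangian $P_V\subset\mathcal{A}_0(K)$, and by Theorem~\ref{thm:strongerfirstordersigs=0} the first-order signature of $K$ corresponding to $P_V$ vanishes. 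Thus $P_V$ lies in the set $\mathcal{P}$ from Definition~\ref{defn:secondordersignatures}, and by hypothesis we may choose a representative $(J,f)=\frac{\partial K}{\partial\mathfrak{m}}$ for which $\mathfrak{m}$ represents $P_V$ and $(J,f)$ has maximal Alexander nullity (respectively, is an infected trivial link).

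Next I would form the $4$-manifold $W=V\cup_{M_K}E$, where $E$ is the canonical cobordism from $M_K$ to $M_J$ constructed in Section~\ref{subsec:bordisms}. The central claim is that $W$ is a null-$(1.5)$-bordism for $(M_J,f)$ at level $1$, in the precise sense required to apply the null-$(n)$-bordism generalization of Theorem~\ref{thm:nblinkimpliesfirstsig=0}. The level-$1$ assertion and the identification of the induced abelian representation with $f$ are proved exactly as in Proposition~\ref{prop:derivisnullbord}, since those arguments use only the handle structure of $E$ together with Lemma~\ref{lem:Wfacts} and the $\pi_1/\pi_1^{(2)}_r$-information of $V$, neither of which depends on whether $V$ is a slice-disk exterior or merely a $(2.5)$-solution. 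The new content is to verify the null-$(1.5)$-bordism condition on second homology, namely that $H_2(W)/i_\ast H_2(\partial W)$ admits a surface system of the appropriate order with order-$(1.5)$ dual system.

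For this one combines Proposition~\ref{prop:Efacts}(7), which gives $H_2(E)/i_\ast H_2(\partial^+ E)=0$, with the $(2.5)$-surface-and-dual-surface basis for $H_2(V)$ supplied by the definition of $(2.5)$-solvability. A Mayer--Vietoris argument then produces a basis of $H_2(W)/i_\ast H_2(\partial W)$ from the $V$-side, and the passage from a $(2.5)$-type system inside $V$ to a $(1.5)$-type system inside $W$ is accomplished because the $\pi_1$ of $V$ maps into $\pi_1(W)$ in a way that shifts the relevant derived-series indexing by one (exactly as the loss of one derived level is what permits the drop from $(2.5)$-solvable to the null-$(1.5)$-bordism category). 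This step will be the main obstacle: one must track carefully how the $\pi_1$-surjection $\pi_1(M_K)\twoheadrightarrow \pi_1(E)$ (Proposition~\ref{prop:Efacts}(2)) interacts with the derived series of $\pi_1(W)$, and verify that the images of the order-$(2)$ surfaces in $V$ are still of the required order in $W$, while the order-$(2.5)$ duals remain of order $(1.5)$ after inclusion.

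Having established that $W$ is a null-$(1.5)$-bordism at level $1$ with $\beta_1(W)=1$, I would then invoke the natural generalization of Theorem~\ref{thm:nblinkimpliesfirstsig=0} to null-$(n)$-bordisms. The proof given in Section~\ref{sec:nullbordisms} carries over essentially verbatim, because the only inputs it uses about $W$ are the $H_1$-computation, the bound $\beta_1(W)=1$, and the $L^{(2)}$-signature vanishing result Theorem~\ref{thm:CHLsigvanishes}, the last of which is proved in ~\cite[Theorem 5.9]{CHL3} in the full null-$(n)$-bordism generality. The self-annihilation input Theorem~\ref{thm:selfannihil} is likewise proved in that generality in ~\cite[Theorem 6.3]{CHL3}. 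Thus we obtain that one of the first-order signatures of $(J,f)$ has absolute value at most $c-1\leq \operatorname{genus}(\Sigma)-1$, and equals zero in the infected-trivial-link case. By Definition~\ref{defn:secondordersignatures}, this signature lies in any complete set of second-order signatures of $K$ computed using these representatives, completing the proof.
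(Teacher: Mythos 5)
Your overall architecture (form $W=V\cup_{M_K}E$, show it has enough structure to run the argument of Theorem~\ref{thm:nblinkimpliesfirstsig=0}, quote the CHL results in their stronger generality) is in the right spirit, but your central claim is wrong as stated, and the error is not cosmetic. You assert that $W$ is a \emph{null-$(1.5)$-bordism} at level $1$, justified by saying that $\pi_1(V)\to\pi_1(W)$ ``shifts the relevant derived-series indexing by one,'' so that the $(2.5)$-type surface system of $V$ becomes a $(1.5)$-type system in $W$. No such shift exists: inclusion-induced maps carry $\pi_1(V)^{(k)}$ into $\pi_1(W)^{(k)}$, so the surfaces and dual surfaces exhibiting the $(2.5)$-solution structure of $V$ retain their order in $W$ (and $E$ contributes nothing to $H_2(W)/i_*H_2(\partial W)$ by Proposition~\ref{prop:Efacts}); what drops by one is only the \emph{level}, i.e.\ the depth at which $j_*(\pi_1(M_J))$ sits in the derived series of $\pi_1(W)$. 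The correct statement — and the one the paper uses — is that $W$ is a $(2.5)$-bordism (in the sense of ~\cite[Section 5]{CHL3}) at level $1$. The distinction is fatal to your argument as written: to extract a \emph{first-order} signature of $(J,f)$ one must use the coefficient system $\phi:\pi_1(W)\to\pi_1(W)/\pi_1(W)^{(3)}_r$ (level $1$ plus two), and a signature estimate over a non-null bordism at that depth requires the surface system to be of order $(2.5)$; a $(1.5)$-type system only controls coefficient systems $\G$ with $\G^{(2)}=1$, and any such system restricted to $\pi_1(M_J)\subset\pi_1(W)^{(1)}$ is abelian, so it could only ever see zero-th order signatures of $J$. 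So the structure you claim to have verified would not let you invoke any version of Theorem~\ref{thm:CHLsigvanishes}/Theorem~\ref{thm:nblinkimpliesfirstsig=0} at the depth you need.

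Note also that the quoted Theorem~\ref{thm:CHLsigvanishes} is stated only for null-bordisms, where no restriction on the depth of $\G$ is needed; its extension to $(n)$-bordisms with nontrivial $H_2$ carries exactly the depth hypothesis your bookkeeping gets wrong. The paper sidesteps this by splitting the analytic estimate: $\rho(M_K,\phi)=0$ comes from applying ~\cite[Theorem 4.2, $n=2$]{COT} directly to the $(2.5)$-solution $V$ with the depth-$3$ coefficient system, and the comparison $|\rho(M_J,\phi)-\rho(M_K,\phi)|\leq \beta_1(M_J)+\beta_1(M_K)-2-\mathrm{rank}$ comes from applying Theorem~\ref{thm:CHLsigvanishes} to $E$ alone, which \emph{is} an honest null-bordism with two boundary components. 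The $(2.5)$/$(2)$-bordism structure of $W$ is used only for the algebraic identification of $\rho(M_J,\phi)$ as a first-order signature, via the fact that Theorem~\ref{thm:selfannihil} is proved in ~\cite[Theorem 6.6]{CHL3} in that generality. If you repair your claim to ``$W$ is a $(2.5)$-bordism at level $1$'' and either verify the depth-restricted signature bound for such bordisms or adopt the paper's $V$-plus-$E$ splitting for the analytic step, the rest of your outline goes through.
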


\begin{proof}[Proof of Theorem~\ref{thm:strongermain2}] Suppose $M_K$ is $(2.5)$-solvable via $V$. Then
\begin{equation}\label{eq:defPV}
P_V=\text{ker}\left(\mathcal{A}_0(K)\equiv H_1(M_K;\Q[t,t^{-1}])\to H_1(V;\Q[t,t^{-1}])\right).
\end{equation}
is a Lagrangian by ~\cite[Theorem 4.4, $n=1$]{COT}. By hypothesis, there is a Seifert surface, $\Sigma_{P_V}$, a metabolizer $\mathfrak{m}$ representing $P_V$ and a representative $(J,f)=(\frac{\partial K}{\partial \mathfrak{m}},f)$ with $\eta(J,f)=$genus$(\Sigma_{P_V})-1$. Adjoin to $V$ the cobordism $E$ from $M_K$ to $M_J$ as described in Subsection~\ref{subsec:bordisms}. Let $W=V\cup E$ and let
$$
\phi:\pi_1(W)\to \pi_1(W)/\pi_1(W)^{(3)}_r\equiv \G
$$
be the projection. Then, since $V$ is a $(2.5)$-solution, by ~\cite[Theorem 4.2, $n=2$]{COT},
$$
\rho(M_K,\phi)=0.
$$
Now note that, since $\Delta_K\neq 1$, the restriction of $\phi$ to $\pi_1(M_J)$ is nontrivial by $(3)$ of Proposition~\ref{prop:Efacts}. It follows that $E$ is a null-bordism (with two boundary components here).  Hence Theorem~\ref{thm:CHLsigvanishes} may be applied to $(E,\phi)$ to yield
$$
|\rho(M_J,\phi)-\rho(M_K,\phi)|\leq \beta_1(M_K)+\beta_1(M_J)-2-\text{rank}_{\mathbb{Z}\G}H_1(M_J;\mathbb{Z}\G)-\text{rank}_{\mathbb{Z}\G}H_1(M_K;\mathbb{Z}\G) .
$$
Since $\beta_1(M_K)=1$ the latter rank term vanishes by ~\cite[Lemma 2.11]{COT}. Substituting what we know yields
$$
|\rho(M_J,\phi)|\leq \text{genus}(\Sigma_P)-1-\text{rank}_{\mathbb{Z}\G}H_1(M_J;\mathbb{Z}\G)\leq \text{genus}(\Sigma_P)-1.
$$
In the special case that $J$ is an infected trivial link it follows directly from ~\cite[Lemma 6.8]{CHL4} that
$$
\text{rank}_{\Z\G}H_1(M_J;\Z\G)=\beta_1(M_J)-1=\text{genus}(\Sigma_P)-1,
$$
so $\rho(M_J,\phi)=0$.

It only remains to show that $\rho(M_J,\phi)$ is a first-order signature of $J$, under the hypothesis that $\eta(J,f)=\beta_1(M_J)-1$. Even though $W$ is no longer a null-bordism for $(J,f)$, it is a $(2.5)$-bordism (at level $1$) in the sense of ~\cite[Section 5]{CHL3} (the analysis of Proposition~\ref{prop:derivisnullbord} applies to this larger category). Note that $\phi$ restricted to $G=\pi_1(M_J)$ factors  through $G^{(2)}$. Now the proof is identical to the the proof of this fact in the proof of Theorem~\ref{thm:nblinkimpliesfirstsig=0} with $n=1$ (or, in the simpler genus one case, Theorem~\ref{thm:nbimpliesfirstsig=0}). We need only note that the crucial result Theorem~\ref{thm:selfannihil} (or, in the genus one case, Theorem~\ref{thm:nontriviality}) was actually proved in more generality in ~\cite[Theorem 6.6]{CHL3} than that stated here and in fact applies to the $(2)$-bordism $W$.
\end{proof}

\begin{prop}[Generalization of Proposition~\ref{prop:derivisnullbord}]\label{prop:strongerderivisnullbord} Suppose that $K$ ($\Delta_K(t)\neq 1)$ is a knot that is null-$(n)$-bordant, $n\geq 1$, via $V$ at level $m$ wherein the induced isotropic submodule $P_V$ is a Lagrangian. Let $(J,f)=\frac{\partial K}{\partial\mathfrak{m}}$ where $\mathfrak{m}$ represents $P_V$. Then $(M_J,f)$ is null-$(n)$-bordant at level $m+1$ via $V\cup E$ where $E$ is the cobordism from $M_K$ to $M_J$ constructed in Subsection~\ref{subsec:bordisms}.
\end{prop}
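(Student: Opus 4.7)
The plan is to imitate the proof of Proposition~\ref{prop:derivisnullbord}, with the slice disk exterior replaced by the general null-$(n)$-bordism $V$. Set $W = V \cup_{M_K} E$, where $E$ is the cobordism constructed in Subsection~\ref{subsec:bordisms}. I need to verify three things: that $W$ satisfies the $H_2$-condition in the definition of a null-$(n)$-bordism; that the induced map $\pi_1(M_J) \to \pi_1(W)/\pi_1(W)^{(m+2)}_r$ is non-trivial with minimality at level $m+1$; and that the associated abelian quotient is identifiable with $f$.

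For the $H_2$-condition, I would run a Mayer--Vietoris argument along $M_K$. Property~7 of Proposition~\ref{prop:Efacts} gives that $H_2(E)/i_*(H_2(\partial^+E;\Z))=0$, so every class in $H_2(W)$ either comes from $H_2(\partial W)=H_2(M_J)$ via $\partial^+E$, or pulls back to a class in $H_2(V)/i_*(H_2(M_K;\Z))$. The latter already admits a basis of surfaces realizing the $(n)$-solvability disjointness conditions (by the hypothesis that $V$ is a null-$(n)$-bordism), and these lift to $W$ without alteration, so $W$ inherits an $(n)$-level null-bordism structure. For the $\pi_1$ part, Proposition~\ref{prop:Efacts}(3) identifies each meridian $\mu_i$ of $J_i$ in $M_J$ with the band meridian $\alpha_i$ of $\Sigma\subset M_K$ via a free homotopy in $E$. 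Since each $\alpha_i$ lies in $\pi_1(M_K)^{(1)}$, and since by hypothesis $j_*(\pi_1(M_K))\subset \pi_1(V)^{(m)}_r$, it follows that $j_*(\alpha_i)\in \pi_1(V)^{(m+1)}_r\subset \pi_1(W)^{(m+1)}_r$. Therefore the image of $\pi_1(M_J)$ in $\pi_1(W)$ lies in $\pi_1(W)^{(m+1)}_r$ and its abelianization (factoring through $H_1(M_J)$) is generated by the classes $[j_*(\alpha_i)]$.

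To identify this abelianization with $f$, I would import the commutative diagram from the proof of Proposition~\ref{prop:derivisnullbord}: the $\alpha_i$ in $\pi_1(M_K)^{(1)}/\pi_1(M_K)^{(2)}$ project to a set that spans $\mathcal{A}_0(K)/P_V$ by Proposition~\ref{prop:lagrangprops}, since $\mathfrak{m}$ represents the Lagrangian $P_V$. Under the coefficient system $\pi_1(V)\to \pi_1(V)/\pi_1(V)^{(m+1)}_r$ induced by $V$, the definition of $P_V$ as the kernel of $\mathcal{A}_0(K)\to H_1(V;\mathbb{Q}[t,t^{-1}])$ guarantees that this assignment descends to a well-defined injection of $\mathcal{A}_0(K)/P_V$ into an appropriate rationalized derived quotient of $\pi_1(W)$. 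By construction this injection is precisely the map $f$ associated to $\frac{\partial K}{\partial\mathfrak m}$ in Section~\ref{sec:derivatives}.

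The main obstacle is the non-triviality assertion at level $m+1$: I must show that the image of $\{\alpha_i\}$ in $\pi_1(W)^{(m+1)}_r/\pi_1(W)^{(m+2)}_r$ is non-zero (and hence that $m+1$ is exactly minimal, not just an upper bound on the level). This reduces, via the torsion-free-abelianness of $\pi_1(W)^{(m+1)}_r/\pi_1(W)^{(m+2)}_r$ and its embedding into its rationalization, to the injectivity of $\mathcal{A}_0(K)/P_V$ into the rationalized $(m{+}1)$st quotient of $\pi_1(W)$. The hypothesis that $P_V$ is a Lagrangian (not merely isotropic), together with $\Delta_K(t)\neq 1$, ensures $\mathcal{A}_0(K)/P_V\neq 0$, and the Lagrangian identification pushes through the inclusion $V\hookrightarrow W$ exactly as in the proof of Proposition~\ref{prop:derivisnullbord} (the $2$- and $3$-handles added to pass from $V$ to $W$ are attached along curves killed in $\mathcal{A}_0(K)/P_V$, so they do not disturb the relevant quotient). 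This completes the verification that $(M_J,f)$ is null-$(n)$-bordant via $W$ at level exactly $m+1$.
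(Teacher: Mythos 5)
Your overall route is the paper's: glue the cobordism $E$ of Subsection~\ref{subsec:bordisms} to $V$ along $M_K$, verify the $H_2$-condition using Proposition~\ref{prop:Efacts} (the paper records this as $H_2(W)/i_*(H_2(\partial W))\cong H_2(V)$, whose special surfaces persist in $W$), push the image of $\pi_1(M_J)$ into $\pi_1(W)^{(m+1)}_r$ via the free homotopy of the meridians of $J$ to the band meridians $\alpha_i$ together with $j_*(\pi_1(M_K))\subset\pi_1(V)^{(m)}_r$, and then use that $\mathfrak{m}$ represents the Lagrangian $P_V$ to get non-triviality at level exactly $m+1$ and the identification of the induced abelian quotient with $f$. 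Those parts are fine and agree with the paper.

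The gap is in how you handle $P_V$. For a null-$(n)$-bordism at level $m$ the induced isotropic submodule is \emph{not} the kernel of $\mathcal{A}_0(K)\to H_1(V;\Q[t,t^{-1}])$; that formula is the slice-disk case of Equation~(\ref{eq:defP}), and for $m\geq 1$ it does not even make sense: any homomorphism $\pi_1(V)\to\Z$ kills $\pi_1(V)^{(1)}_r\supseteq\pi_1(V)^{(m)}_r$ and hence kills the image of $\pi_1(M_K)$, so there is no $\Q[t,t^{-1}]$-coefficient system on $V$ restricting compatibly and nontrivially to $M_K$. The correct object (Lemma~\ref{lem:bordisminducesP}, Theorem~\ref{thm:nontriviality}) is the kernel of $\mathcal{A}_0(K)\to \mathcal{A}_0(K)\otimes_{\Q[t,t^{-1}]}\Q\Lambda\cong H_1(M_K;\Q\Lambda)\to H_1(V;\Q\Lambda)$, where $\Lambda=\pi_1(V)/\pi_1(V)^{(m+1)}_r$. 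Consequently the commutative diagram you propose to ``import'' from the proof of Proposition~\ref{prop:derivisnullbord} cannot be used verbatim; it must be replaced by its $\Q\Lambda$-coefficient analogue, with bottom row mapping into $H_1(V;\Q\Lambda)\cong\bigl(\pi^{(m+1)}_r/[\pi^{(m+1)}_r,\pi^{(m+1)}_r]\bigr)\otimes\Q$ for $\pi=\pi_1(V)$. Both of your key deductions rest on this level-$m$ diagram: (i) that the components of $J$, which represent classes in $P_V$, include into $\pi_1(V)^{(m+2)}_r$, so that attaching the $2$-handles and the $3$-handle leaves $\pi_1/\pi^{(m+2)}_r$ unchanged --- your $\Q[t,t^{-1}]$ formulation gives no way to reach depth $m+2$ when $m\geq 1$; and (ii) that some $\alpha_i$ survives in $\bigl(\pi_1(W)^{(m+1)}_r/\pi_1(W)^{(m+2)}_r\bigr)\otimes\Q$, because the $\alpha_i$ span $\mathcal{A}_0(K)/P_V\neq 0$ (Proposition~\ref{prop:lagrangprops}, $\Delta_K(t)\neq 1$) and $P_V$ is by definition the kernel of the level-$m$ map. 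Once $P_V$ and the diagram are stated with $\Q\Lambda$-coefficients, your argument becomes exactly the paper's proof.
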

\begin{proof}[Proof of Proposition~\ref{prop:strongerderivisnullbord}] Let $E$ be the cobordism from $M_K$ to $M_J$ and set $W=V\cup E$. This mimics the construction of Lemma~\ref{lem:Wfacts} but here $V$ generalizes $B^4-\Delta$. The properties listed in Lemma~\ref{lem:Wfacts} follow easily, except that $H_2(W)/i_*(H_2(\partial W))\cong H_2(V)$. Thus it follows trivially from the definitions that $W$ is an $(n)$-bordism whose boundary is $M_J$. To show that it is at level $m+1$, we to show that
$$
\psi:\pi_1(M_J)\overset{i_*}{\to} \pi_1(W)\to \pi_1(W)/\pi_1(W)^{(m+2)}_r
$$
is not the zero map, and is in fact the map $f$ followed by an embedding. But by hypothesis
$$
\pi_1(M_K)\subset \pi_1(V)^{(m)}_r
$$
so
$$
\psi(\pi_1(M_J))\subset \pi_1(W)^{(m+1)}_r
$$
using $(3)$ and $(5)$ of Proposition~\ref{prop:Efacts}. Hence $\psi$ factors through the abelianization of $\pi_1(M_J)$. The proof now closely follows the proof of Proposition~\ref{prop:derivisnullbord}. Recall that by property $2$ of Lemma~\ref{lem:Wfacts}, the meridians of $M_J$ are freely homotopic in $E$ to the $\alpha_i$. Thus it suffices to understand the (free abelian) subgroup generated by $\alpha_i$ in $\pi_1(W)^{(m+1)}_r/\pi_1(W)^{(m+2)}_r$ under inclusion. Since $W$ is obtained from $V$ by adding $2$-handles along the components of $J$ (and then a 3-handle), assuming these components lie in $\pi^{(m+2)}_r$, then $V\hookrightarrow W$ induces an isomorphism
$$
\pi_1(V)/\pi_1(V)^{(m+2)}_r\cong \pi_1(W)/\pi_1(W)^{(m+2)}_r,
$$
But recall that, by definition, $P_V$ is the kernel of the composition in the bottom row of the diagram:
\begin{equation*}
\begin{CD}
\pi_1(M_K)^{(1)}      @>\equiv>>    \pi_1(M_K)^{(1)}  @>j_*>> \pi^{(m+1)}_r @>>>
\pi^{(m+1)}_r/\pi^{(n+2)}_r \\
  @VVpV   @VVV        @VVV       @VViV\\
\mathcal{A}_0(K)     @>id\otimes 1>>  H_1(M_K;\mathbb{Q}\Lambda)    @>j_*>> H_1(V;\mathbb{Q}\Lambda) @>\cong>>
  (\pi^{(m+1)}_r/[\pi^{(m+1)}_r,\pi^{(m+1)}_r])\otimes_\mathbb{Z} \mathbb{Q}\\
\end{CD}
\end{equation*}
where $\Lambda\equiv\pi_1(V)/\pi_1(V)^{(m+1)}_r$. This diagram establishes that the kernel of the
top row is precisely $p^{-1}(P_V)$. Thus since $J$ represents $P_V$, the components of $J$ are in the kernel of the top composition, hence map into $\pi^{(m+2)}_r$. Since $\pi_1(M_K)\subset \pi_1(V)^{(m)}_r$ and since the $\alpha_i\in \pi_1(V)^{(m+1)}_r$ the problem reduces to knowing the kernel of
$$
\pi_1(M_K)^{(1)}/\pi_1(M_K)^{(2)}\hookrightarrow \pi_1(V)^{(m+1)}_r/\pi_1(V)^{(m+2)}_r,
$$
which is $p^{-1}(P_V)$. The details are in the proof of Proposition~\ref{prop:derivisnullbord}.

Then $(M_J,f)$ is null-$(n)$-bordant at level $m+1$ via $W$.
\end{proof}

\begin{prop}\label{prop:derivativerho1} Suppose $K$ is an algebraically slice genus $1$ knot with $\rho^1(K)\neq 0$ (see Definition~\ref{defn:rho1}) that is null-bordant via $W$ at level $m$. Then (for any genus $1$ Seifert surface) there is a metabolizer $\mathfrak{m}$ representing $P_W$ such that $\frac{\partial K}{\partial \mathfrak{m} }$ is null-bordant at level $m+1$.
\end{prop}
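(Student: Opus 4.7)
The plan is to use the hypothesis $\rho^1(K) \neq 0$ to force the isotropic submodule $P_W \subset \mathcal{A}_0(K)$ induced by $W$ (via Lemma~\ref{lem:bordisminducesP}) to be non-trivial; since $K$ has genus $1$, this forces $P_W$ to be a Lagrangian, which then selects (via Lemma~\ref{lem:lagrangianisrep}) a metabolizer $\mathfrak{m}$. The cobordism $E$ of Subsection~\ref{subsec:bordisms} can then be glued onto $W$ to produce the desired null-bordism $W' = W \cup_{M_K} E$ of $M_J$, where $J = \frac{\partial K}{\partial\mathfrak{m}}$.

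The key first step is verifying $P_W \neq 0$. First, $\rho^1(K) \neq 0$ rules out $\Delta_K(t) = 1$, because as remarked just after Definition~\ref{defn:firstordersignatures}, in that case we would have $\rho^1(K) = \rho_0(K) = 0$. Hence $\mathcal{A}_0(K)$ is two-dimensional over $\Q$ and any Lagrangian is one-dimensional. By Theorem~\ref{thm:nbimpliesfirstsig=0}, the first-order signature $\rho(M_K, \phi_{P_W})$ vanishes; but $\rho^1(K)$ is, by Definition~\ref{defn:rho1}, the first-order signature corresponding to $P = 0$, so $\rho^1(K) \neq 0$ forces $P_W \neq 0$ and hence makes $P_W$ a Lagrangian. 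Applying Lemma~\ref{lem:lagrangianisrep} (whose genus-one case is nearly immediate) to the given genus $1$ Seifert surface $\Sigma$ produces a metabolizer $\mathfrak{m}$ representing $P_W$, and one sets $J = \frac{\partial K}{\partial\mathfrak{m}}$.

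Next I would form $W' = W \cup_{M_K} E$, with $\partial W' = M_J$, and argue that $W'$ is a null-bordism of $M_J$ at level $m+1$. Since every null-bordism is in particular a null-$(1)$-bordism, Proposition~\ref{prop:strongerderivisnullbord} (applied with $n = 1$) handles the level computation: the bound that forces $\pi_1(M_J) \subset \pi_1(W')^{(m+1)}_r$ comes from Proposition~\ref{prop:Efacts}(3), while the non-triviality of $\pi_1(M_J) \to \pi_1(W')/\pi_1(W')^{(m+2)}_r$ follows from the non-triviality of $\mathcal{A}_0(K)/P_W$ (which is one-dimensional over $\Q$ here); it also identifies the induced homomorphism with the canonical epimorphism $f$.

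The remaining ingredient is to upgrade the null-$(1)$-bordism condition on $H_2$ to the stronger null-bordism condition $H_2(W')/i_*(H_2(\partial W')) = 0$. This is a Mayer-Vietoris calculation using Proposition~\ref{prop:Efacts}: part (4) gives $H_1(M_K) \cong H_1(E)$, so the Mayer-Vietoris connecting map $H_2(W') \to H_1(M_K)$ vanishes; parts (6) and (7) give that $H_2(M_K) \to H_2(E)$ is zero and $H_2(E)$ is generated by classes from $H_2(M_J)$; and the null-bordism hypothesis on $W$ gives $H_2(W) = i_*(H_2(M_K))$. Together these facts show that classes coming from $W$ become null-homologous in $W'$ via $E$, while classes coming from $E$ are in the image of $H_2(\partial W')$. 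I expect the only real subtlety to be the step $P_W \neq 0$, since that is where the hypothesis $\rho^1(K) \neq 0$ is essentially used; everything after that is bookkeeping built on top of the earlier propositions.
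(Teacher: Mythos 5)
Your proposal is correct and follows essentially the same route as the paper: the hypothesis $\rho^1(K)\neq 0$ together with the vanishing result for null-bordant knots (the paper unwinds Theorem~\ref{thm:nbimpliesfirstsig=0} via Theorem~\ref{thm:CHLsigvanishes} and Theorem~\ref{thm:nontriviality}, while you cite it as packaged with Lemma~\ref{lem:bordisminducesP}) forces $P_W\neq 0$, hence a Lagrangian in the genus-one case, after which one represents $P_W$ by a metabolizer and glues on the cobordism $E$. Your extra explicitness — invoking Proposition~\ref{prop:strongerderivisnullbord} with $n=1$ for the level computation and running the Mayer--Vietoris argument to get $H_2(W')/i_*(H_2(\partial W'))=0$ — only spells out what the paper leaves implicit after it establishes that the components of $J$ map into $\pi_1(W)^{(m+2)}_r$ while $\alpha_1$ does not.
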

\begin{proof}[Proof of Proposition~\ref{prop:derivativerho1}] Suppose $K$ is null-bordant via $V$ at level $m$ ($m$ minimal). Consider
$$
\tilde{\psi}:\pi_1(V)\to \pi_1(V)/\pi_1(V)_r^{(m+1)}\equiv \Lambda
$$
whose restriction $\psi$ to $\pi_1(M_K)$ factors non-trivially through $\Z$ by hypothesis. By the first paragraph of the proof of Theorem~\ref{thm:nbimpliesfirstsig=0}, $\rho(M_K,\psi)=0$. Moreover, applying Theorem~\ref{thm:nontriviality}, we deduce that the kernel of
$$
\mathcal{A}_0(K)\overset{id\otimes 1}\lra  \mathcal{A}_0(K) \otimes_{\mathbb{Q}[t,t^{-1}]}\mathbb{Q}\Lambda \overset{i_*}{\to} H_1(M_K;\mathbb{Q}\Lambda)\overset{j_*}\to H_1(V;\mathbb{Q}\Lambda)
$$
is an isotropic submodule, $P$. Since $K$ has genus $1$, either $P=0$ or $P$ has $\Q$-rank $1$ and so is a Lagrangian. We claim that the latter holds. To see this, as before, consider the following commutative diagram, letting $\pi=\pi_1(V)$.
\begin{equation*}
\begin{CD}
\pi_1(M_K)^{(1)}      @>\equiv>>    \pi_1(M_K)^{(1)}  @>j_*>> \pi^{(m+1)}_r @>>>
\pi^{(m+1)}_r/\pi^{(m+2)}_r \\
  @VVpV   @VVV        @VVV       @VViV\\
\mathcal{A}_0(K)     @>id\otimes 1>>  H_1(M_K;\mathbb{Q}\Lambda)    @>j_*>> H_1(V;\mathbb{Q}\Lambda) @>\cong>>
  (\pi^{(m+1)}_r/[\pi^{(m+1)}_r,\pi^{(m+1)}_r])\otimes_\mathbb{Z} \mathbb{Q}\\
\end{CD}
\end{equation*}
As we have seen, this diagram establishes that the kernel of
$\psi$ (top row) is precisely $p^{-1}(P)$. If $P$ were trivial then the kernel of $\psi$ would be $\pi_1(M_K)^{(2)}$. In this case we would have $0=\rho(M_K,\psi)=\rho^1(K)$ which by hypothesis is non-zero. This contradiction shows that $P$ is in fact a Lagrangian. Fixing a genus $1$ Seifert surface $\Sigma$, this Lagrangian is represented by a metabolizer $J=\frac{\partial K}{\partial \mathfrak{m} }$. Hence $J$ represents an element of $\pi^{(m+2)}_r$ under the inclusion since it lies in the kernel of the top row of the diagram. By contrast, the meridian, $\alpha_1$, of the band on which $J$ lies, spans $\mathcal{A}_0(K)/P$ by Proposition~\ref{prop:lagrangprops}. Thus $\alpha_1$ is not in the kernel of the top row of the diagram. Hence $\alpha_1$ does \emph{not} represent an element of $\pi^{(m+2)}_r$ under the inclusion.

\end{proof}

\bibliographystyle{plain}
\bibliography{mybib7mathscinet}

\begin{thebibliography}{10}

\bibitem{CG1}
A.~J. Casson and C.~McA. Gordon.
\newblock On slice knots in dimension three.
\newblock In {\em Algebraic and geometric topology (Proc. Sympos. Pure Math.,
  Stanford Univ., Stanford, Calif., 1976), Part 2}, Proc. Sympos. Pure Math.,
  XXXII, pages 39--53. Amer. Math. Soc., Providence, R.I., 1978.

\bibitem{CG2}
A.~J. Casson and C.~McA. Gordon.
\newblock Cobordism of classical knots.
\newblock In {\em \`A la recherche de la topologie perdue}, volume~62 of {\em
  Progr. Math.}, pages 181--199. Birkh\"auser Boston, Boston, MA, 1986.
\newblock With an appendix by P. M. Gilmer.

\bibitem{ChGr1}
Jeff Cheeger and Mikhael Gromov.
\newblock Bounds on the von {N}eumann dimension of {$L\sp 2$}-cohomology and
  the {G}auss-{B}onnet theorem for open manifolds.
\newblock {\em J. Differential Geom.}, 21(1):1--34, 1985.

\bibitem{CiF}
David Cimasoni and Vincent Florens.
\newblock Generalized {S}eifert surfaces and signatures of colored links.
\newblock {\em Trans. Amer. Math. Soc.}, 360(3):1223--1264, 2008.

\bibitem{CHL4}
Tim Cochran, Shelly Harvey, and Constance Leidy.
\newblock Link concordance and generalized doubling operators.
\newblock {\em Algebr. Geom. Topol.}, 8:1593--1646, 2008.

\bibitem{C}
Tim~D. Cochran.
\newblock Noncommutative knot theory.
\newblock {\em Algebr. Geom. Topol.}, 4:347--398, 2004.

\bibitem{CFT}
Tim~D. Cochran, Stefan Friedl, and Peter Teichner.
\newblock New constructions of slice links.
\newblock {\em Comment. Math. Helv.}, 84:617--638, 2009.

\bibitem{CH3}
Tim~D. Cochran and Shelly Harvey.
\newblock Homology and derived p-series of groups.
\newblock {\em J. London Math. Soc.}, 78(3):677--692, 2008.

\bibitem{CHL1A}
Tim~D. Cochran, Shelly Harvey, and Constance Leidy.
\newblock Knot concordance and {B}lanchfield duality.
\newblock Preprint http://xxx.lanl.gov/abs/0705.3987.

\bibitem{CHL1}
Tim~D. Cochran, Shelly Harvey, and Constance Leidy.
\newblock Knot concordance and {B}lanchfield duality.
\newblock {\em Oberwolfach Reports; European Mathematical Society Publishing
  House}, 3(3), 2006.

\bibitem{CHL3}
Tim~D. Cochran, Shelly Harvey, and Constance Leidy.
\newblock Knot concordance and higher-order {B}lanchfield duality.
\newblock {\em Geom. Topol.}, 13:1419--1482, 2009.

\bibitem{CO2}
Tim~D. Cochran and Kent~E. Orr.
\newblock Homology boundary links and {B}lanchfield forms: concordance
  classification and new tangle-theoretic constructions.
\newblock {\em Topology}, 33(3):397--427, 1994.

\bibitem{COT}
Tim~D. Cochran, Kent~E. Orr, and Peter Teichner.
\newblock Knot concordance, {W}hitney towers and {$L\sp 2$}-signatures.
\newblock {\em Ann. of Math. (2)}, 157(2):433--519, 2003.

\bibitem{COT2}
Tim~D. Cochran, Kent~E. Orr, and Peter Teichner.
\newblock Structure in the classical knot concordance group.
\newblock {\em Comment. Math. Helv.}, 79(1):105--123, 2004.

\bibitem{CT}
Tim~D. Cochran and Peter Teichner.
\newblock Knot concordance and von {N}eumann {$\rho$}-invariants.
\newblock {\em Duke Math. J.}, 137(2):337--379, 2007.

\bibitem{Fr2}
Stefan Friedl.
\newblock Eta invariants as sliceness obstructions and their relation to
  {C}asson-{G}ordon invariants.
\newblock {\em Algebr. Geom. Topol.}, 4:893--934, 2004.

\bibitem{Gi2}
Patrick Gilmer.
\newblock Classical knot and link concordance.
\newblock {\em Comment. Math. Helv.}, 68(1):1--19, 1993.

\bibitem{Gi3}
Patrick~M. Gilmer.
\newblock Slice knots in {$S\sp{3}$}.
\newblock {\em Quart. J. Math. Oxford Ser. (2)}, 34(135):305--322, 1983.

\bibitem{Ha1}
Shelly~L. Harvey.
\newblock Higher-order polynomial invariants of 3-manifolds giving lower bounds
  for the {T}hurston norm.
\newblock {\em Topology}, 44(5):895--945, 2005.

\bibitem{Hi}
Jonathan~A. Hillman.
\newblock {\em Alexander ideals of links}, volume 895 of {\em Lecture Notes in
  Mathematics}.
\newblock Springer-Verlag, Berlin, 1981.

\bibitem{Hor3}
Peter~D. Horn.
\newblock Higher-order genera of knots.
\newblock Preprint http://lanl.arxiv.org/abs/0807.0434.

\bibitem{Ka3}
Akio Kawauchi.
\newblock {\em A survey of knot theory}.
\newblock Birkh\"auser Verlag, Basel, 1996.
\newblock Translated and revised from the 1990 Japanese original by the author.

\bibitem{Ke}
C.~Kearton.
\newblock Cobordism of knots and {B}lanchfield duality.
\newblock {\em J. London Math. Soc.}, 10(2):406--408, 1975.

\bibitem{Ki2}
Taehee Kim.
\newblock New obstructions to doubly slicing knots.
\newblock {\em Topology}, 45(3):543--566, 2006.

\bibitem{Lam}
Christoph Lamm.
\newblock Symmetric unions and ribbon knots.
\newblock {\em Osaka J. Math.}, 37(3):537--550, 2000.

\bibitem{Lei3}
Constance Leidy.
\newblock Higher-order linking forms for 3-manifolds.
\newblock Preprint.

\bibitem{Let}
Carl~F. Letsche.
\newblock An obstruction to slicing knots using the eta invariant.
\newblock {\em Math. Proc. Cambridge Philos. Soc.}, 128(2):301--319, 2000.

\bibitem{L5}
J.~Levine.
\newblock Knot cobordism groups in codimension two.
\newblock {\em Comment. Math. Helv.}, 44:229--244, 1969.

\bibitem{LS}
Wolfgang L{\"u}ck and Thomas Schick.
\newblock Various {$L\sp 2$}-signatures and a topological {$L\sp 2$}-signature
  theorem.
\newblock In {\em High-dimensional manifold topology}, pages 362--399. World
  Sci. Publ., River Edge, NJ, 2003.

\bibitem{Sm}
Lawrence Smolinsky.
\newblock Invariants of link cobordism.
\newblock In {\em Proceedings of the 1987 Georgia Topology Conference (Athens,
  GA, 1987)}, volume~32, pages 161--168, 1989.

\bibitem{Str}
Ralph Strebel.
\newblock Homological methods applied to the derived series of groups.
\newblock {\em Comment. Math. Helv.}, 49:302--332, 1974.

\end{thebibliography}
\end{document}